\documentclass[11pt]{article}
\usepackage{geometry}
\usepackage{amsmath,amsfonts,amssymb, tikz, amsthm}

\usepackage{hyperref}
\usepackage[utf8]{inputenc}
\usepackage{csquotes}
\usepackage{algpseudocode}
\algnewcommand{\Input}[1]{\State \textbf{Input:} #1}
\algnewcommand{\Output}[1]{\State \textbf{Output:} #1}
\usepackage{authblk}

\usepackage{caption}
\usepackage{mathrsfs}
\usepackage{graphicx}
\usepackage[english]{babel}
\usepackage[tableposition=top]{caption} 
\usepackage{array}
\usepackage{algorithmicx}
\usepackage{algpseudocode}
\usepackage{algorithmicx}
\usepackage{algpseudocode}
\usepackage{algorithmicx}
\usepackage{algorithm}
\usepackage{algpseudocode}
\usepackage{algpseudocode}
\usepackage{amsmath}
\usepackage{mathrsfs}
\usepackage{amssymb}
\usepackage{amsfonts}
\usepackage{mathtools}
\usepackage{tikz-cd}
\usepackage[utf8]{inputenc}

\newtheorem{theorem}{Theorem}[section]
\newtheorem{corollary}[theorem]{Corollary}
\newtheorem{lemma}[theorem]{Lemma}

\newtheorem{proposition}[theorem]{Proposition}

\newtheorem{eg}[theorem]{Example}

\theoremstyle{definition}
\newtheorem{definition}[theorem]{Definition}
\newtheorem*{note}{Notation}
\newtheorem*{conv}{Convention}

\newtheorem*{rem}{Remark}
\theoremstyle{remark}

\numberwithin{equation}{section}

\newcommand{\Z}{\mathbb{Z}}

\newcommand{\Q}{\mathbb{Q}}
\newcommand{\C}{\mathbb{C}}
\newcommand{\R}{\mathbb{R}}

\newcommand{\N}{\mathrm{N}}
\newcommand{\T}{\text{Tr}}
\newcommand{\co}{\mathcal{O}}

\newcommand{\keywords}[1]{\noindent\textbf{Keywords:} #1}
\newcommand{\subjclass}[2]{\noindent\textbf{#1 Mathematics Subject Classification:} #2.}

\title{Criterion set for diagonal forms of higher degree}

\author[]{Om Prakash}
\affil[]{Charles University, Faculty of Mathematics and Physics, Department of Algebra, Sokolovsk\'{a} 83, 186 75 Praha 8, Czech Republic}

\affil[]{E-mail: prakash@karlin.mff.cuni.cz. ORCID: 0009-0007-2124-9736}

\begin{document}
\maketitle

\begin{abstract} 
The $290$-Theorem of Bhargava--Hanke completely classifies all universal quadratic forms over the rational integers in terms of a finite criterion set: a positive definite quadratic form is universal if and only if it represents every integer in this finite set. In this article, we study diagonal forms of higher degree over a totally real number field and prove the existence of a unique finite criterion set, minimal with respect to inclusion, for universal diagonal forms of higher degree. As part of the proof, we establish an analog of the asymptotic local–global principle for such forms and develop a local theory for the representation of integers by diagonal forms of higher degree. 

\medskip
\keywords{criterion sets, totally real fields, higher degree forms, universal forms, diagonal forms, local-global principle}
	 
\medskip
\subjclass{2020}{11E76, 11E10, 11R04, 11R80}
\end{abstract}

\section{Introduction}

Determining which integers can be represented by a given quadratic form is a classical problem in number theory that has interested mathematicians since ancient times. In this context, a central question is whether a given \emph{positive definite} quadratic form is \emph{universal}, i.e., represents all positive integers. In $1770$, Lagrange showed that a sum of four squares $x^2+y^2+z^2+w^2$ is universal. Since universal quadratic forms exist, it is natural to ask whether we can classify all universal quadratic forms.

It was Ramanujan \cite{ram1} who initiated the classification of quaternary diagonal universal quadratic forms, i.e., forms given  by $ax^2+by^2+cz^2+dw^2$, and  Dickson \cite{dic} completed the classification. This work was subsequently extended by Willerding \cite{wil}, who classified all  universal quadratic forms with integral \emph{Gram matrices} (i.e., \emph{classical} quadratic forms) in four variables. In $1993$, Conway–Schneeberger \cite{sch,bhargava1} proved the $15$-Theorem, characterizing classical universal quadratic forms in any number of variables. Specifically, they showed such a form is universal if and only if it represents $1,2,3,5,6,7,10,14, \text{ and } 15$. Thus, the set consisting of these $9$ integers serves as a finite criterion set for universal classical quadratic forms. These classification results culminated in the $290$-Theorem of Bhargava--Hanke \cite{bhargava2}, which states that a positive definite quadratic form is universal if and only if it represents 
\begin{center}
    $ 1, 2, 3, 5, 6, 7, 10, 13, 14, 15, 17, 19, 21, 22, 23, 26,$\\
    $29, 30, 31, 34, 35, 37, 42, 58, 93, 110, 145, 203,\text{ and }290$.
\end{center}
Moreover, they showed that this list of $29$ integers is minimal. Besides classifying all universal quadratic forms, quadratic forms representing only a given subset of $\Z$ have gained significant attention, especially in relation to the conjectural $451$-Theorem of Rouse \cite{rou} and related results \cite{br,de,dr}.

The study of universal quadratic forms becomes more subtle when one passes from the rational integers to the ring of algebraic integers $\co_K$ inside a totally real number field $K$. A quadratic form over $K$, i.e., with coefficients of the form lying in $\co_K$, is called universal if it is \emph{totally positive definite} and represents all \emph{totally positive integers} $\co_K^+$. Maa\ss{} \cite{maa} showed that a sum of three squares is universal over $\Q(\sqrt{5})$. In fact, this is the only non-trivial totally real number field over which a sum of squares is universal, thanks to a result of Siegel \cite{si1}. Nevertheless, universal quadratic forms exist over every totally real number field, as follows from the asymptotic local–global principle of Hsia--Kitaoka--Kneser \cite{hkk}. They proved that, provided the quadratic form has at least $5$ variables, representation of totally positive integers with sufficiently large norm satisfies a local--global principle. 

While the asymptotic local--global principle implies the existence of universal quadratic forms over $K$, this principle yields no information about the size of the \emph{minimal rank}. In particular, Kitaoka's conjecture--that only finitely many totally real number fields admit a universal ternary quadratic form--remains open, although some partial results are known for families of fields \cite{ckr, ek,ktz,kkk,kkpyz, kk}, and for the fixed degree fields \cite{ky}.

\medskip

As for criterion sets for quadratic forms over number fields, the situation remains largely mysterious. Chan--Oh \cite{co} established the existence of finite criterion sets for quadratic forms over totally real number fields, however, their result does not provide any information about the structure or size of criterion sets. The first explicit criterion set over number field was given by Lee \cite{lee}, who showed that a classical quadratic form over $\Q(\sqrt{5})$ is universal if and only if it represents $7$ specific elements. Recently, several results about criterion sets have been established, with some more in preparation \cite{kmvy}. Notably, Kala--Kr\'{a}sensk\'{y}--Romeo \cite{kkr} proved that for every totally real number field $K$, a unique minimal criterion set exists. Moreover, they also showed some specific elements that must be there in the criterion set (e.g., all squarefree \emph{indecomposables}). However, their results do not provide an explicit criterion set, even over a single $K$. But recently, Kr\'{a}sensk\'{y}--Romeo \cite{kr} have conjectured the expected criterion set over certain real quadratic fields, and discussed how to obtain other similar conjectures by adapting the standard method of \emph{escalation} to number fields. 

On the other hand, a lot is known about criterion sets for \emph{$n$-universal} forms, where a positive definite quadratic form over $\Z$ is called $n$-universal if it represents all positive definite quadratic forms of \emph{rank} $n$. Beyond the general finiteness result for criterion sets of Kim--Kim--Oh \cite{kko} and Chan--Oh \cite{co} in general, Kim--Kim--Oh \cite{kko2} have found a six element criterion set for $2$-universality, and Kominers \cite{kom} proved its uniqueness. Also, for $8$-universality, the unique criterion set is known \cite{bko, kom2}. In contrast, Kim--Lee--Oh \cite{klo} showed that criterion sets are not unique for $n\geq 9$. The non-uniqueness of criterion sets was first observed by Elkies--Kane--Kominers \cite{ekk}. Additionally, for forms over local fields, criterion sets have been better understood \cite{be1,hh,hhx,xz}, with related results for Hermitian forms \cite{kkp}.

\medskip

In this article, we study the universality of $m$-ic forms, i.e., homogeneous polynomials of degree $m> 2$, with a particular focus on the key question of whether such forms can be classified by finite criterion sets. Partly, the study of universal $m$-ic forms is motivated by the classical Waring's problem, which dates back to $1770$. Hilbert's solution to Waring's problem showed that there is a finite constant $g(m)$ such that every positive integer can be expressed as a sum of at most $g(m)$ $m$th powers, providing examples of universal $m$-ic forms over $\Z$. While most of the recent works in \emph{circle method} focus on $G(m)$, the number of $m$th powers required to represent all sufficiently large integers, the constant $g(m)$ is known for almost all $m$ \cite{vw}.  An $m$-ic form $Q$ over $\Z$ is called positive definite if $Q(x)>0$ for all $x\in \R\setminus\{0\}$, which immediately forces $m$ to be even, thanks to the homogeneity. We say that an $m$-ic form $Q$ is universal if it is positive definite and represents all positive integers. Although $m$-ic forms appear to be a natural generalization of quadratic forms, their structure is fundamentally different. Consequently, most of the techniques that work for quadratic forms fail for $m$-ic forms, making their study even more difficult.

As one might expect, the study of $m$-ic forms over totally real number fields is even more enigmatic. Only recently, Kala and the author established that universal $m$-ic forms exist over every totally real number field; however, unlike the quadratic case, they cannot be characterized by finite criterion sets, even over $\Z$ \cite{kp}. More precisely, we showed that the criterion set for universal $m$-ic forms is exactly the set of all $m$th powerfree integers (up to multiplication by $m$th powers of units), which is infinite. Some further results about universal $m$-ic forms over number fields were obtained by the author in \cite{pra}. Nevertheless, significant progress has been made in the algebraic theory of $m$-ic forms. This includes a Witt theory \cite{hr} for $m$-ic forms, as well as extensions of several fundamental results from the algebraic theory of quadratic forms to this broader setting; see \cite{pumpl4, morandi, pumpl3,  pumpl2, pumpl1}.

Note that an infinite criterion set always exists, as we can simply take the entire set of (totally) positive integers. Throughout the paper, by criterion sets, we mean finite criterion sets unless stated otherwise. To determine a criterion set, if it exists, one needs a way to enumerate all ``proper" universal forms. For quadratic forms, this is typically done using the so-called {escalation} procedure \cite{kr}. However, when one attempts to apply this procedure to $m$-ic forms, a fundamental difficulty arises—-at each stage of this process, infinitely many forms may occur, leading to the infiniteness of criterion sets, in contrast to the quadratic case, where only finitely many forms arise at each step. The primary source of this difficulty can be seen already from the simultaneous representation of two positive integers: Given positive integers $a,b$, there are infinitely many positive definite $m$-ic forms representing both $a$ and $b$, namely $ax^m+cx^2y^{m-2}+by^m$ for $c\in \Z_{\geq 0}$. This does not occur for quadratic forms, thanks to the Cauchy--Schwarz inequality. 

In this paper, we study diagonal $m$-ic forms, where such a situation does not arise because these forms do not have cross-terms. Thus, the restriction to diagonal forms is not merely a choice (e.g., for simplicity) but is rather forced by the circumstances. From this point onward, we only consider totally positive definite diagonal $m$-ic forms over a totally real number field $K$. Our goal is to classify all universal diagonal $m$-ic forms over a totally real number field $K$ in analogy to the $290$-Theorem. Since all our results are immediate for diagonal quadratic forms, we henceforth assume $m\ge 2$ is even. The main result of this paper is the following:

\begin{theorem}\label{main2}
    Let $m\geq 2$ be an even integer and $K$ be a totally real number field. There exists a unique finite criterion set for universal diagonal $m$-ic forms over $K$ that is minimal with respect to inclusion. In fact, this set contains precisely all critical elements.
\end{theorem}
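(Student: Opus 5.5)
We follow the strategy of Kala--Kr\'{a}sensk\'{y}--Romeo for quadratic forms, adapted to diagonal $m$-ic forms. Call $a\in\co_K^+$ \emph{critical} if some totally positive definite diagonal $m$-ic form represents every element of $\co_K^+$ except $a$, and every element not equivalent to $a$ up to multiplication by $m$th powers of units. The argument has two halves. First, every criterion set must contain every critical element (up to the unit ambiguity). Second, the set of critical elements is finite and is itself a criterion set. Uniqueness and minimality then follow formally. If $S$ is any criterion set, it contains all critical elements, and the set of critical elements is a criterion set. Hence it is the smallest criterion set with respect to inclusion, and any inclusion-minimal criterion set must equal it.

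The first half is immediate. If $a$ is critical, witnessed by a form $Q_a$ that misses only $a$, then a criterion set $S$ avoiding $a$ would declare $Q_a$ universal. That is absurd, so $a\in S$.

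For the second half, fix a diagonal form $Q=\sum_i a_i x_i^m$ that is not universal. We must show that it misses some critical element.

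\begin{itemize}
\item \emph{Finiteness.} We aim to prove a uniform bound on the minimal missed element. Order $\co_K^+$ by trace (or norm), noting that only finitely many elements have bounded trace modulo the unit action. We show that any non-universal diagonal form misses some element of trace at most $T$, where $T$ depends only on $K$ and $m$.
\item \emph{Escalation.} Let $Q$ be non-universal with first missed element $t_1$. Add the variable $t_1x^m$ and repeat. Diagonality makes each step finite: there are only finitely many choices of coefficient, namely the missed element times unit classes. This is exactly where cross terms would break the argument.
\item \emph{Termination.} The escalation must stop after a bounded number of steps. Here we use the asymptotic local--global principle for diagonal $m$-ic forms, the analog of Hsia--Kitaoka--Kneser that the paper develops. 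A diagonal form with enough variables that is locally universal at every prime represents all totally positive elements of sufficiently large norm, with bounds uniform over the finitely many escalator forms. We also need the local theory: once enough variables are present, a form obtained by escalation becomes everywhere locally universal, because it represents every residue class modulo a fixed power of each prime dividing $m$ and the discriminant. Beyond that point only finitely many elements can be missed.
\end{itemize}

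Once $T$ is established, the minimal missed element is critical. Take $a$ to be a missed element of $Q$ that is minimal in a suitable sense. Construct $Q'=Q\perp\sum_{b\neq a} b\,y_b^m$, where $b$ runs over the finitely many elements of trace at most $T$, up to units, other than $a$. Then $Q'$ represents everything except possibly $a$. To control large elements we also append a form that is universal on all elements except ones dividing out to $a$. To see that $Q'$ still misses $a$, we use the total positivity of the diagonal terms: a representation of $a$ can only use summands whose coefficients are totally $\preceq a$, and those must come from $Q$ or $b\preceq a$. Choosing $a$ minimal for the partial order $\preceq$ ("$a-b$ is totally positive or zero") among missed elements excludes the $b$'s. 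Hence $a$ is critical, so $Q$ fails on the critical set.

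The main obstacle is the termination and uniformity step. It requires both the asymptotic local--global principle for diagonal $m$-ic forms, presumably via the circle method over $K$ with enough variables, and a uniform local analysis at primes dividing $m$, where $m$th powers are not all units.
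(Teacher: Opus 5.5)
Your reduction is right: critical elements lie in every criterion set, so it suffices to show that every non-universal $Q$ misses a critical element. However, the step ``the minimal missed element is critical'' has a genuine gap, because your $Q'$ fails in two ways. First, it need not miss $a$. The adjoined $b$'s include small elements that $Q$ already represents, and $\preceq$-minimality of $a$ among elements \emph{missed by} $Q$ says nothing about those. A representation of $a$ by $Q'$ can combine new terms $b y_b^m$ with each other or with a value of $Q$, and such sums are not values of $Q$. Concretely, over $\Q$ with $m=2$, $Q=\langle 1\rangle$ misses $a=2$, your construction adjoins $b=1$, and $2=1\cdot 1^2+1\cdot 1^2$. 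Second, nothing shows that $Q'$ represents every large element other than $a$. A bound $T$ giving a criterion set only says that a form representing \emph{all} elements of trace at most $T$ is universal; it says nothing about a form missing exactly one of them. Your patch of appending ``a form that is universal on all elements except ones dividing out to $a$'' presupposes essentially the form whose existence you are trying to prove, so the argument is circular there.

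The missing idea (Proposition~\ref{prop:9.6}) is to take $a$ to be a truant of $Q$, i.e.\ of minimal \emph{norm}, and to escalate only by elements of norm at least $\N(a)$. Concretely, repeatedly adjoin a minimal-norm class $\beta\neq a$ not yet represented by the current form. Since $Q$ already represents everything of norm below $\N(a)$, each such $\beta$ has $\N(\beta)\ge\N(a)$. Then $a=\gamma+\beta z^m$ with $z\neq 0$ forces $\N(a)\ge\N(\beta)\N(z)^m\ge\N(a)$, hence $\gamma=0$, $z\in\co_K^\times$, and $\beta=a$ as classes, a contradiction; so $a$ is never represented. The real work is showing that this chain stops, and the bound $T$ does not give that. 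You must rerun the proof of Theorem~\ref{termination} along the chain. Use Proposition~\ref{prop:8.2} outside a finite set $S$ of primes. At $\mathfrak{p}\in S$, use local criterion sets $W'_\mathfrak{p}$ chosen by Lemma~\ref{lem:8.4'} to avoid the class of $a$, so the chain eventually represents them. One further coefficient then gives, via Lemma~\ref{lem:8.6}, the uniform bound $t$ on $\min_i v_\mathfrak{p}(ma_ix_i^{m-1})$ required by Corollary~\ref{cor:alg}. Plain local universality, as in your sketch, is not the input that corollary takes. Corollary~\ref{cor:alg} then bounds the norms of the infinitely many distinct adjoined classes, a contradiction. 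Separately, for your finiteness step the escalation tree must branch over all $\beta$ with $\beta y^m\preceq\alpha$ for some $y\neq 0$ (Lemma~\ref{lem:3.6}), not just over unit multiples of the missed element. For example, over $\Q$ with $m=2$, the unbranched chain $\langle 1\rangle,\langle 1,2\rangle,\langle 1,2,5\rangle,\langle 1,2,5,10\rangle$ has truants $1,2,5,10$, all of which are represented by the non-universal $\langle 1,1,1\rangle$.
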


We prove this in Theorem \ref{uniqueness}. In order to prove the uniqueness and minimality, we first need to prove the existence of criterion sets. In the escalation procedure for diagonal forms, which we develop in Section \ref{S3}, each step has only finitely many forms (Lemma \ref{finitenessof truant}), and its termination provides a criterion set (Proposition \ref{lem:finitecriterionset}). However, proving that this process terminates is the difficult part of establishing the existence of criterion sets. In the quadratic case, this is accomplished using the asymptotic local--global principle of Hsia--Kitaoka-Kneser \cite{hkk}.

\begin{rem}
    We once again stress that Theorem \ref{main2} is not true without the diagonal assumption, since in that case, the criterion set is infinite \cite{kp}. Also, the case $m=2$ is already covered in \cite{kkr}.
\end{rem}

The failure of the Hasse principle (local--global principle) for higher degree forms suggests that an asymptotic local--global principle may not hold. However, such failure mainly arises from global obstructions, such as \emph{Brauer–Manin obstructions}, especially in fewer variables. Thus, for forms with large rank, one may hope that such obstructions are less likely to arise; that makes the asymptotic local--global principle more plausible. We prove the following asymptotic local--global principle for diagonal $m$-ic forms in Corollary \ref{cor:alg}. 

\begin{corollary}\label{cor:main}
    Let $m\geq 2$ be an even integer, $K$ be a totally real number field of degree $d$, and $Q(x)=\sum_{i=1}^na_ix_i^m$ be a totally positive definite diagonal $m$-ic form in $n$ variables over $K$ with $n>d(d+1)(m-1)2^{m-1}$. Assume that there exists an integer $t\geq 0$ depending only on $m,K,Q$, such that for every $\alpha\in\co_K^+$ and for every prime ideal $\mathfrak{p}\subset\co_K$ there exists $x\in\co_\mathfrak{p}^n$ with $Q(x)=\alpha$ and 
    \[
    \min_{1\leq i\leq n}v_\mathfrak{p}(ma_ix_i^{m-1})\leq t.
    \]
    
     Then, there exists a constant $C>0$ depending only on $m,K,Q,t$, such that every $\alpha\in \co_K^+$ with $\N(\alpha)>C$ is represented by $Q$.
\end{corollary}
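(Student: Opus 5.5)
The plan is to use the circle method over $K$ (Birch's theorem for diagonal forms, or Siegel's Waring's problem over number fields), with the points restricted to a small box around a fixed real point in each embedding. First, reduce to finitely many residue classes. For $\alpha\in\co_K^+$ and each prime $\mathfrak p$, the hypothesis gives a local solution $x^{(\mathfrak p)}$ with $\min_i v_\mathfrak p(ma_i x_i^{m-1})\le t$. By Hensel's lemma, in the form "$v(f(x)-\alpha)>2v(\partial_i f(x))$ implies a nearby true root", any $y\equiv x^{(\mathfrak p)}\pmod{\mathfrak p^{2t+1}}$ satisfies $Q(y)\equiv\alpha\pmod{\mathfrak p^{2t+1}}$, and every such congruence solution lifts to a $\mathfrak p$-adic solution. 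Only finitely many primes matter: for $\mathfrak p\nmid m\prod a_i$ and $\mathrm N\mathfrak p$ large, a non-singular solution exists for every $\alpha$ by Weil's bound applied to the diagonal form, provided $n\ge 3$. So I set $M=m\prod_i a_i\cdot\prod_{\mathfrak p\in S}\mathfrak p^{2t+1}$ for a finite set $S$. The local condition on $\alpha$ then reduces to the following: there is $x_0\in(\co_K/M)^n$ with $Q(x_0)\equiv\alpha \pmod M$ and with, at each $\mathfrak p\mid M$, some coordinate satisfying $v_\mathfrak p(ma_ix_{0,i}^{m-1})\le t$.

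Next, fix such an $x_0$ and write $x=x_0+My$ with $y\in\co_K^n$. The equation becomes $Q(x_0+My)=\alpha$, a non-homogeneous polynomial equation in $y$ with leading form $M^m Q(y)$. Since $Q$ is diagonal, the Weyl sum factors as a product over coordinates of one-variable sums $\sum_{y}e(\mathrm{Tr}(\gamma a_i(x_{0,i}+My)^m))$. The standard major/minor arc decomposition over $K$ then applies coordinate by coordinate:
\begin{itemize}
\item On the minor arcs, Weyl differencing over $K$ gives a saving of $P^{-\sigma}$ per variable, with $\sigma\approx 2^{1-m}$ times a $K$-dependent factor. The hypothesis $n>d(d+1)(m-1)2^{m-1}$ is exactly what is needed for the product of $n$ such bounds to beat the trivial size by more than the measure of the major arcs (Birch's condition adapted to number fields, as in Skinner's or Siegel's version).
\item On the major arcs, I get $\mathfrak S(\alpha)J(\alpha)P^{d(n-m)}$ plus an error.
\end{itemize}
I choose the box in each real embedding around a point $\xi$ with $Q(\xi)=\alpha^{(j)}/P^m$ normalized. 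Then the singular integral $J$ is bounded below, because the real solution set of a positive definite form is nonsingular away from $0$.

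The main obstacle is a lower bound for the singular series $\mathfrak S=\prod_\mathfrak p\sigma_\mathfrak p$ that is uniform in $\alpha$. For $\mathfrak p\nmid M$ with $\mathrm N\mathfrak p$ large, the standard estimate is $\sigma_\mathfrak p=1+O(\mathrm N\mathfrak p^{-1-\delta})$, using the convergence condition $n>2m$, which is implied by the hypothesis; this makes the tail product at least $1/2$. For the finitely many remaining primes, I will use the hypothesis: the non-singular solution with derivative valuation at most $t$ gives, by Hensel, $\sigma_\mathfrak p\ge \mathrm N\mathfrak p^{-(n-1)(2t+1)}$. This bound depends only on $t$, which is the reason $t$ is assumed uniform. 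Two technical points remain. First, the primes in $S$ must be absorbed, including choosing $x_0$ so that the modulus-$M$ lift is used consistently. Second, the major-arc error terms must be $o(P^{d(n-m)})$ with $P^{dm}\asymp \mathrm N(\alpha)$. Here I need to make sure the box lies inside the totally positive region whenever $\alpha$ is totally positive but possibly very skewed across embeddings. I would handle this by treating each embedding with its own scale $P_j=(\alpha^{(j)})^{1/m}$ and using a unit to balance $\alpha$ up to bounded distortion. Since $Q$ is diagonal, $\alpha\mapsto\varepsilon^m\alpha$ preserves representability, so we may assume $\alpha$ is balanced modulo $m$th powers of units, with finitely many bounded distortions. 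Combining all of this gives a representation count $\gg \mathrm N(\alpha)^{n/m-1}>0$ for $\mathrm N(\alpha)>C(m,K,Q,t)$.
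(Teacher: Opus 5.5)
Your argument is sound in outline but takes a genuinely different route from the paper. You run the circle method directly over $K$ for the single equation $Q(x)=\alpha$. Then the singular series is $\prod_{\mathfrak p}\sigma_{\mathfrak p}$ over primes of $K$, and the hypothesis feeds straight into the one-coordinate Hensel bound $\sigma_{\mathfrak p}\ge\N\mathfrak p^{-(n-1)(2t+1)}$.

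The paper instead Weil-restricts. Using the basis dual to a totally positive basis of the codifferent (Proposition~\ref{unitadjust}), it turns $Q(x)=\alpha$ into $d$ integral $m$-ic forms in $nd$ variables (Lemma~\ref{lem:5.1}). It computes $\dim V^\ast=nd-n$ (Proposition~\ref{singdim}), so Birch's Theorem~1 over $\Z$ applies exactly when $n>d(d+1)(m-1)2^{m-1}$. Because Birch's singular series runs over rational primes, it then needs Lemma~\ref{lem:6.1} and Proposition~\ref{pro:6.3}, via $\co_K\otimes\Z_p\cong\prod_{\mathfrak p\mid p}\co_{\mathfrak p}$ and Smith normal form. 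These convert your local data at all $\mathfrak p\mid p$ into a $d\times d$ Jacobian minor of bounded valuation, before the Hensel count of Lemma~\ref{lem:6.2}, which is the analogue of your bound. Unit balancing (Lemma~\ref{lem:4.1}) and the compactness argument for the singular integral are common to both routes.

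What each route buys:
\begin{itemize}
\item The paper gets a precisely stated, off-the-shelf black box with explicit uniformity in the target.
\item Your route avoids all the Weil-restriction bookkeeping and needs far fewer variables, as the paper's introduction itself notes. The price is that you must cite precisely, or prove, a number-field circle method for the inhomogeneous equation $\sum a_ix_i^m=\alpha$ with coefficients and error uniform in $\alpha$. The paper deliberately avoids this; "Birch's or Skinner's version" is not yet a reference that covers it.
\end{itemize}

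For the same reason, your claim that $n>d(d+1)(m-1)2^{m-1}$ is ``exactly what is needed'' on the minor arcs is inaccurate. That threshold comes from Weil restriction ($d$ forms with $nd-\dim V^\ast=n$). It merely implies what a one-form treatment over $K$ requires.

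Three smaller remarks:
\begin{itemize}
\item The restriction to a residue class $x=x_0+My$ is superfluous. The hypothesis covers every $\mathfrak p$, and the Hensel lower bound already handles the finitely many primes outside the tail estimate. The shift only complicates the local factors at $\mathfrak p\mid M$.
\item The appeal to Weil's bound is superfluous for the same reason.
\item The box lives in $x$-space, so nothing has to stay ``inside the totally positive region''. What matters is that, after unit balancing, the normalized targets $\sigma_j(\alpha)/P^m$ lie in a fixed compact subset of $(0,\infty)^d$, and a fixed box contains real solutions off $V^\ast$.
\end{itemize}
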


The proof of this occupies a substantial portion of the paper. We call this a corollary because we obtain it from a theorem of Birch \cite[Theorem $1$]{bir}, rather than directly applying the \emph{circle method} over the number field. A direct application of the circle method would likely give a better bound on the number of variables (e.g., $n\geq 2^m+1$), but it would require us to develop substantially more analytic machinery than is needed for our purposes. The current bound provided by Birch's theorem is sufficient; this approach allows us to isolate the arguments that are most relevant in the context of this paper. Let us also highlight that the circle method argument would not avoid the arithmetic difficulties that arise here. For instance, the \emph{singular integral} and the \emph{singular series} require a separate treatment. 

\medskip

Applying Birch's theorem in our setting is not straightforward; it requires several additional ingredients. To prove Corollary \ref{cor:main}, our strategy is as follows. First, from a diagonal $m$-ic form over $K$, we obtain a descent system over $\Z$ using the \emph{Weil restriction} (e.g., see \cite{poo}, \cite{bps}). Using a special integral basis for $K$, constructed in Section \ref{S4} with tools from the geometry of numbers, we show that the problem of representing $\alpha\in\co_K^+$ by a diagonal $m$-ic form can be translated into a problem over $\Z$. More precisely, it is equivalent to representing the coordinates of $\alpha$, with respect to this special integral basis, by a system of $m$-ic forms (not necessarily diagonal) over $\Z$. Moreover, under this translation, large norm elements translate to elements with large coordinates. Now, to study the representation of coordinate vectors by a system of $m$-ic forms, we use Birch's theorem \cite[Theorem $1$]{bir}, which provides an asymptotic formula for counting such coordinate vectors by the system of $m$-ic forms.

It can easily happen that the main term of Birch's asymptotic is zero. Hence, the asymptotic cannot be used to prove the representation of all large coordinate vectors. In our setting, establishing the non-vanishing of the {singular integral} is relatively straightforward. The main difficulty, however, lies in treating the {singular series}, which requires a new strategy. Birch's sufficient condition states that if the variety generated by the system of $m$-ic forms has a \emph{non-singular} $mod \, p$ point,  then the singular series is uniformly bounded away from zero; this condition may not be satisfied in our case because, for primes dividing $m$, the rank of the Jacobian can easily be zero. We prove a different sufficient condition in Section \ref{S6}. In particular, the assumption stated in Corollary \ref{cor:main} implies the existence of a uniform positive bound on the singular series. As stated, this condition may look strange. However, we verify in Section \ref{S8} that the assumption holds for forms arising in the escalation procedure, provided they have at least $d(d+1)(m-1)2^{m-1}$ variables. In particular, the assumption holds for \emph{primitively locally universal} forms, i.e., $Q(x)=\alpha$ with some $x_i\in\co_K^\times$.

As in the case of quadratic forms, Corollary \ref{cor:main} provides a new proof of the existence of a universal diagonal $m$-ic form over every totally real number field \cite[Theorem $5.4$]{kp}.

Using Corollary \ref{cor:main}, we show in Theorem \ref{termination} that the escalation procedure terminates. This immediately supplies the existence of finite criterion sets for universal diagonal $m$-ic forms (see Corollary \ref{cor:9.4}). Then, using the strategy from our Theorem \ref{termination}, we prove that the set of all \emph{critical} elements (see Definition \ref{def:critical}), which by definition must be in every criterion set, is indeed a criterion set.  

\medskip

Let us emphasize that, although universal diagonal $m$-ic forms are characterized by a unique finite criterion set (Corollary \ref{cor:main}), we do not yet have an explicit description of this set, even over $\Z$, nor  do we have a bound on the norms of its elements. A forthcoming work of Kala and the author discusses the explicit criterion set for universal diagonal $m$-ic forms over $\Z$ conjecturally.  

As in the work of Kala--Kr\'{a}sensk\'{y}--Romeo \cite{kkr}, we also prove some examples of elements that must be critical, so they lie in every criterion set. For example, the set of all \emph{$m$th powerfree} indecomposables must lie in every criterion set. In particular, at least over real quadratic fields, using Blomer--Kala's \cite{bk} description of indecomposables, one can count these $m$th powerfree indecomposables to obtain a lower bound on the criterion set. However, we do not pursue that here, leaving it for future work. 

While most of the paper is concerned with the representation of global integers, i.e., elements of $\co_K^+$, we also establish several interesting results concerning the representation of local integers along the way. We believe that these results will also be very useful in the future. 

\section*{Acknowledgments}

The author is very grateful to V\'{\i}t\v{e}zslav Kala, Jakub  Kr\'{a}sensk\'{y}, and Damaris Schindler for their helpful suggestions on earlier drafts, as well as to Subham Roy, Robin Visser, and Pavlo Yatsyna for several interesting and useful discussions. The author was supported by {Czech Science Foundation grant} 26-20514S, {Charles University programmes}  PRIMUS/25/SCI/017, SVV-2023-260721, and {GAUK project No.} 236225. ChatGPT free was used for the reformulation of sentences and for checking grammatical errors.

\section{Preliminaries}

In this section, we collect some necessary definitions and concepts that we use throughout the paper. 

\subsection{Number fields} Throughout the article, $K$ will be a totally real number field of degree $d$, i.e., all its embeddings $\sigma_1,\sigma_2,\dots,\sigma_d: K\rightarrow \C$ have images in $\R$. We denote by $\co_K$ and $\co_K^\times$ the ring of integers inside $K$ and the group of units, respectively. An \emph{integral basis} (or a \emph{$\Z$-basis}) for $K$ is a set 
$\{\omega_1,\omega_2,\dots,\omega_d\}\subset\co_K$ such that 
\[
\co_K=\Z\omega_1+\Z\omega_2+\cdots+\Z\omega_d.
\]
Thus, every $\alpha\in\co_K$ can uniquely be expressed as $\alpha=\alpha_1\omega_1+\cdots+\alpha_d\omega_d$ with $\alpha_i\in \Z$. We say that $(\alpha_1,\alpha_2,\dots,\alpha_d)\in \Z^d$ is the \emph{coordinate} of $\alpha$ with respect to an integral basis $(\omega_1,\dots,\omega_d)$ if $\alpha=\alpha_1\omega_1+\cdots+\alpha_d\omega_d$. For $\alpha\in K$, its norm and trace are defined as follows: 
\[
\N(\alpha)=\prod_{i=1}^d \sigma_i(\alpha), \quad \T(\alpha)=\sum_{i=1}^d \sigma_i(\alpha).
\]

Throughout, we denote by $\Delta=(\sigma_i(\omega_j))_{1\leq i,j\leq d}$ the invertible matrix whose determinant squared is the \emph{discriminant} of $K$. Note that the discriminant is an invariant and does not depend on the choice of an integral basis. Now, recall that the trace  defines a non-degenerate symmetric $\Q$-bilinear pairing 
\[
\langle \cdot,\cdot\rangle: K\times K\rightarrow \Q: \langle x,y\rangle\mapsto \T(xy).
\]
Throughout this text, we will refer to this bilinear pairing as the \emph{trace-pairing}. The \emph{codifferent}, denoted by $\co^\vee_K$, is the dual of $\co_K$ with respect to trace-pairing; it is defined as \[
\co^\vee_K=\{x\in K\mid \T(x\co_K)\subseteq \Z\}.
\] 
It is a fractional ideal containing $\co_K$ and a free $\Z$-module of rank $d$. Its dual under the trace-pairing is $\co_K$, i.e., 
\[
\{y\in K\mid \T(x\co^\vee_K)\subseteq \Z\}=\co_K.
\]
Let $\{\omega_1',\dots,\omega_d'\}$ be a $\Z$-basis of $\co^\vee_K$. Define the dual basis $\{\omega_1,\dots,\omega_d\}$ by 
\[
\T(\omega_i'\omega_j)=\delta_{ij}=\begin{cases}
        1, \text{ if } i=j\\
        0, \text{ if } i\neq j.
    \end{cases}
\]
Then, $\{\omega_1,\dots,\omega_d\}$ is a $\Z$-basis for $\co_K$.

We say that an element $\alpha\in K$ is \emph{totally positive} if $\sigma_i(\alpha)>0$ for all $1\leq i\leq d$. For a subset $S\subseteq K$, $S^+$ denotes the set of all totally positive elements in $S$.

For $\alpha, \beta \in K$, we say that $\alpha$ is totally greater than $\beta$, denoted by $\alpha \succ \beta$ if $\alpha-\beta$ is totally positive; equivalently, $\sigma_i(\alpha)>\sigma_i(\beta)$ for all $1\leq i \leq d$; moreover, $\alpha \succeq \beta$ denotes that $\alpha \succ \beta$ or $\alpha = \beta$. A totally positive integer $\alpha\in \co_K^+$ is called \emph{indecomposable} if it cannot be written as a sum of two totally positive integers, i.e., $\alpha\neq \beta+\gamma$ for all $\beta,\gamma\in \co_K^+$. 

For $\Z$-modules (abelian groups) $A,B$, the \emph{tensor product} $A\otimes_
\Z B$ is a $\Z$-module together with a $\Z$-bilinear form 
\[
\otimes: A\times B \rightarrow A\otimes_\Z B: (a,b)\mapsto a\otimes b,
\]
such that for every $\Z$-module $C$ and every $\Z$-bilinear map $f: A\times B\rightarrow C$, there exists a unique homomorphism $\tilde{f}: A\otimes_\Z B\rightarrow C$ such that $\tilde{f}(a\otimes b)=f(a,b)$. If $A$ and $B$ are \emph{free} with bases $\{e_i\}$ and $\{f_j\}$ respectively, then $A\otimes_\Z B$ is free with a basis $\{e_i\otimes f_j\}$.

\subsection[Diagonal m-ic forms]{Diagonal $m$-ic forms} Let $m$ be a positive integer $\geq2$. A diagonal $m$-ic form $Q$ over $K$ of rank $n$ is given by the expression 
\[
Q(x)=Q(x_1,\dots,x_n)=\sum_{i=1}^n a_ix_i^m=:\langle a_1,a_2,\dots,a_n\rangle,
\]
where $a_i\in \co_K$. Here and throughout, we use the rank of $Q$ as a synonym for the number of variables. 

Let $Q_1,Q_2$ be diagonal $m$-ic over $K$ of rank $n_1,n_2$. The \emph{orthogonal sum} of $Q_1$ and $Q_2$ is a diagonal $m$-ic form $Q=Q_1\perp Q_2$ given by 
\[
Q(x_1,\dots,x_{n_1},y_1,\dots,y_{n_2})=Q_1(x_1,\dots,x_{n_1})+Q_2(y_1,\dots,y_{n_2}).
\]

Two diagonal $m$-ic forms $Q_1(x)=\sum_{i=1}^na_ix_i^m$, $Q_2(y)=\sum_{i=1}^nb_iy_i^m$ of rank $n$ over $K$ are called \emph{isometric} (or \emph{equivalent}) if there is an invertible matrix $P=(p_{i,j})_{1\leq i,j\leq n}\in GL_n(\co_K)$ such that 
\[
\sum_{i=1}^na_i\left(\sum_{j=1}^np_{i,j}y_j \right)^m=\sum_{j=1}^nb_jy_j^m,
\]
i.e., $Q_2$ is obtained from $Q_1$ by an invertible linear change of variables. In this paper, we will consider forms up to equivalence, as equivalent forms essentially represent the same elements.

An $m$-ic diagonal form $Q$ over $K$ is called  \emph{totally positive definite} if, for all embeddings $\sigma_i: K\rightarrow \R$
\[
\sigma_i(Q)(x)=\sum_{i=1}^n \sigma(a_i)x_i^m
\]
is positive definite over $\R$, i.e., $\sigma_i(Q)(x)>0$ for all $x\in\R^n\setminus \{0\}$; equivalently, we can state that $Q(x)$ is totally positive for all $x\neq 0$. Since $Q(-x)=(-1)^mQ(x)$ holds for all $x\in\co_K^n$, for a totally positive definite form, we must have $m\in 2\Z_{\geq1}$. Thus, unless stated otherwise, $m$ will always be an even positive integer $\geq 2$. Consequently, $Q$ being totally positive definite is equivalent to $a_i\succ 0$ for all $1\leq i\leq n$.

We say that $\alpha\in \co_K$ is \emph{represented} by a diagonal $m$-ic form $Q$ if there exists $x\in\co_K^n$ such that $Q(x)=\alpha$. A totally positive definite diagonal $m$-ic form $Q$ is \emph{universal} if it represents every element of $\co_K^+$.

Let us denote by $\co_K^+/\co_K^{\times m}$ the set of classes of elements of $\co_K^+$ up to multiplication by $m$th powers of units. Then, by homogeneity, a diagonal $m$-ic form over $K$ represents $\alpha\in\co_K$ if and only if $\alpha\varepsilon^m$ is represented by $Q$ for all $\varepsilon\in \co_K^\times$. Thus, it is enough to study the representation of elements of $\co_K^+/\co_K^{\times m}$. Note that the norm is well-defined on $\co_K^+/\co_K^{\times m}$, i.e., it does not depend on the choice of the representative. 

Let $\beta\in \co_K^+/\co_K^{\times m}$ be a class. By the $m$-ic form $\langle \beta\rangle$, we refer to the form $\beta x^m$. This is well-defined, as we are considering forms up to equivalence because for any other representative $\beta\varepsilon^m$, the two forms $\beta\varepsilon^m x^m$ and $\beta x^m$ are equivalent in the above sense.

\subsection{Local theory} A \emph{discrete valuation ring } is a principal ideal domain that has a unique non-zero prime ideal. Therefore, it has a unique maximal ideal, and hence it is a local ring. Let $R$ be a discrete valuation ring with maximal ideal $\mathfrak{m}$, which is, by definition, a principal ideal. A generator of $\mathfrak{m}$ is called a \emph{uniformizer}, denoted by $\pi$. Every non-zero element of $\co$ can be uniquely written as $\pi^k u$ with $k\in\Z_{\geq 0}$ and $u\in R^\times$, where $R^\times$ is the group of units in $R$. Every non-zero element of the fraction field $K=\operatorname{Frac}(R)$ can be  written uniquely as $\pi^k u$ with $k\in\Z$ and $u\in R^\times$. We can define a \emph{valuation} $v_\mathfrak{m}:K^\times\rightarrow \Z$ given by $v_\mathfrak{m}(\pi^ku)=k$, which is a surjective homomorphism satisfying: for all $x,y\in K^\times $ such that $x+y\neq 0$, $v_\mathfrak{m}(x+y)\geq \min (v_\mathfrak{m}(x),v_\mathfrak{m}(y))$, with equality if $v_\mathfrak{m}(x)\neq v_\mathfrak{m}(y)$. We can extend the valuation to the whole $K$ by setting $v_\mathfrak{m}(0)=\infty$. The \emph{valuation ring} of $v_\mathfrak{m}$ is 
\[
\{x\in K^\times\mid v_\mathfrak{m}(x)\geq 0\}\cup \{0\}=R.
\]
In particular, for a totally real number field $K$, we can recover the valuation ring $\co_K$ as the set of elements with nonnegative valuation.
We say that a discrete valuation ring is \emph{complete} if it is complete with respect to the $\mathfrak{m}$-adic topology. The field $\co_K/\mathfrak{m}\co_K$ is called the \emph{residue field}.

Let $K$ be a number field and $\mathfrak{p}\subset\co_K$ a non-zero prime ideal. The \emph{localization} of $\co_K$ at $\mathfrak{p}$
\[
\co_{(\mathfrak{p})}=\left\{\frac{a}{s}\mid a\in \co_K, s\in \co_K\setminus \mathfrak{p} \right\}
\]
is a discrete valuation ring with maximal ideal $\mathfrak{p}\co_{(\mathfrak{p})}$. The completion of $\co_{(\mathfrak{p})}$ with respect to this maximal ideal is the inverse limit 
\[
\co_\mathfrak{p}=\varprojlim_i \co_{(\mathfrak{p})}/\mathfrak{p}^i\co_{(\mathfrak{p})}.
\]
The ring $\co_\mathfrak{p}$ is a complete discrete valuation ring, with the maximal ideal generated by $\mathfrak{p}$. The fraction field $K_\mathfrak{p}=\operatorname{Frac}(\co_\mathfrak{p})$ is the completion of $K$ at $\mathfrak{p}$ with respect to the $\mathfrak{p}$-adic topology. We denote the $\mathfrak{p}$-adic valuation on $K_\mathfrak{p}$ by $v_\mathfrak{p}$, and throughout, we will work with a fixed \emph{uniformizer} (i.e., a generator of the maximal ideal) $\pi_\mathfrak{p}$. The residue field is $\co_\mathfrak{p}/\mathfrak{p}$, a finite field of characteristic $p$, where $p$ is a rational prime lying below $\mathfrak{p}$, denoted by $\mathfrak{p}|p$. Its cardinality is $\N(\mathfrak{p})=p^{f(\mathfrak{p}|p)}$, where $f(\mathfrak{p}|p)$ is \emph{inertia degree}.

We will use the following generalized Hensel's lemma, from which the usual Hensel's lemma for a discrete valuation ring follows.

\begin{lemma}[{Generalized Hensel lemma}]\label{genhensel}
    Let $R$ be a complete discrete valuation ring with maximal ideal $\mathfrak{m}$, valuation $v_\mathfrak{m}$, $f_1,f_2,\dots,f_d\in R[x_1,x_2,\dots,x_d]$, $a\in R^d$, and $J=(\partial f_i/\partial x_j)(a)\in M_{d\times d}(R)$. Suppose $\det J\neq 0$ and write $\tau=v_\mathfrak{m}(\det J)$. If $v_\mathfrak{m}(f_i(a))\geq 2\tau+1$ for all $1\leq i\leq d$, then there exists a unique $b\in R^d$ such that $f_i(b)=0$ for all $1\leq i\leq d$ and $v_\mathfrak{m}(b-a)=\min_{1\leq i\leq d }v_\mathfrak{m}(b_i-a_i)\geq \tau+1$.
\end{lemma}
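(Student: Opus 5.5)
We prove the generalized Hensel lemma by a multivariable Newton iteration, with the adjugate matrix used to control the denominator $\det J$. Write $f=(f_1,\dots,f_d)$ and $J(x)=(\partial f_i/\partial x_j)(x)$, so that $J=J(a)$. Let $\pi$ be a uniformizer and put $\tau=v_\mathfrak{m}(\det J)$.

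We define a sequence by $a_0=a$ and
\[
a_{k+1}=a_k-J(a_k)^{-1}f(a_k),
\]
where $J(a_k)^{-1}=\det J(a_k)^{-1}\operatorname{adj}J(a_k)$ and $\operatorname{adj}J(a_k)$ has entries in $R$. We prove by induction the following three properties:
\begin{enumerate}
\item[(i)] $v_\mathfrak{m}(a_k-a)\geq \tau+1$;
\item[(ii)] $v_\mathfrak{m}(\det J(a_k))=\tau$;
\item[(iii)] $\min_i v_\mathfrak{m}(f_i(a_k))\geq 2\tau+2^k e$, where $e:=\min_i v_\mathfrak{m}(f_i(a))-2\tau\geq 1$.
\end{enumerate}

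Property (ii) follows from (i). Indeed, $J(a_k)\equiv J(a)\pmod{\pi^{\tau+1}}$ entrywise, so $\det J(a_k)\equiv\det J(a)\pmod{\pi^{\tau+1}}$, and this congruence preserves the valuation $\tau$.

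Given (ii) and (iii), the correction $h_k=J(a_k)^{-1}f(a_k)$ satisfies
\[
v_\mathfrak{m}(h_k)\geq (2\tau+2^ke)-\tau=\tau+2^ke\geq\tau+1,
\]
which gives (i) for $a_{k+1}$.

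For (iii), we use the Taylor expansion $f(a_k-h)=f(a_k)-J(a_k)h+G(h)$, where every component of $G$ lies in the ideal generated by the products $h_ih_j$ and has coefficients in $R$. The linear terms cancel by the choice of $h_k$. Hence
\[
v_\mathfrak{m}(f(a_{k+1}))\geq 2v_\mathfrak{m}(h_k)\geq 2\tau+2^{k+1}e,
\]
which closes the induction.

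The valuations $v_\mathfrak{m}(h_k)$ tend to infinity, so $(a_k)$ is Cauchy and converges in $R^d$ by completeness. Its limit $b$ satisfies $f(b)=0$ by continuity. It also satisfies $v_\mathfrak{m}(b-a)\geq\tau+1$, since every increment $h_k$ has valuation at least $\tau+1$.

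For uniqueness, suppose $b'$ is another root with $v_\mathfrak{m}(b'-a)\geq\tau+1$, and set $h=b'-b$. Expanding around $b$ gives
\[
0=f(b')-f(b)=J(b)h+G(h),
\]
where $G(h)$ is quadratic in $h$. We have $v_\mathfrak{m}(\det J(b))=\tau$, by the same argument as for (ii). Multiplying by $\operatorname{adj}J(b)$ gives
\[
\det J(b)\,h=-\operatorname{adj}J(b)\,G(h).
\]
If $h\neq 0$, let $s=v_\mathfrak{m}(h)\geq\tau+1$. Then $\tau+s\geq 2s$, so $s\leq\tau$, which is a contradiction. Therefore $h=0$.

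The main obstacle is the bookkeeping in (ii) and (iii). The point is that the distance $\tau+1$ exactly suffices to freeze $v_\mathfrak{m}(\det J)$, while the hypothesis $\geq 2\tau+1$ is what allows division by $\det J$ and still makes the quadratic error term strictly improve at each step.
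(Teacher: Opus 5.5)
Your proof is correct. The paper gives no proof of this lemma: it quotes it as a standard tool and uses it in Lemma~\ref{lem:6.2} and, in one variable, in Lemma~\ref{lem:8.3}. So there is no competing route to compare with. Your Newton iteration with the adjugate is the standard argument and makes the statement self-contained.

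The individual steps all hold up:
\begin{itemize}
\item The congruence $J(a_k)\equiv J(a)\pmod{\pi^{\tau+1}}$ fixes $v_\mathfrak{m}(\det J(a_k))=\tau$.
\item The adjugate gives $v_\mathfrak{m}(h_k)\ge \tau+2^ke$.
\item The quadratic Taylor remainder doubles this bound, which closes the induction.
\item Completeness gives the limit $b$.
\item Uniqueness follows because $\tau+s\ge 2s$ is incompatible with $s\ge\tau+1$.
\end{itemize}

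A few points are worth stating explicitly:
\begin{itemize}
\item The estimate $v_\mathfrak{m}(G(h))\ge 2v_\mathfrak{m}(h)$ uses $h\in R^d$, which holds since $v_\mathfrak{m}(h_k)\ge\tau+1\ge 1$.
\item The condition $v_\mathfrak{m}(x-a)\ge\tau+1$ defines a closed set, so it passes to the limit $b$.
\item If $f(a)=0$, then $e=\infty$ and the sequence is constant, so this case is trivial.
\end{itemize}

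Your iteration also proves more than the stated bound. Every increment satisfies $v_\mathfrak{m}(h_k)\ge\tau+e$, so you get $v_\mathfrak{m}(b-a)\ge\min_i v_\mathfrak{m}(f_i(a))-\tau$. This sharper form is the one the paper actually invokes in the proof of Lemma~\ref{lem:8.3}, where it writes $v_\mathfrak{p}(b-1)\ge v_\mathfrak{p}(f(1))-v_\mathfrak{p}(f'(1))$.
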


Next, we need the following standard approximation property, whose proof we include only for completeness.  

\begin{lemma}\label{approximation}
    For every integer $n\geq 1$, the natural map 
    \[
    \co_K\rightarrow \co_\mathfrak{p}/\mathfrak{p}^n\co_\mathfrak{p}
    \]
    is an isomorphism. In particular, for $\alpha\in \co_\mathfrak{p}$, there exists a $\beta\in \co_K$ such that $\alpha\equiv \beta\pmod {\mathfrak{p}^n\co_\mathfrak{p}}$.
\end{lemma}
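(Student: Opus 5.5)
The statement should be read as saying that the natural map induces an isomorphism $\co_K/\mathfrak{p}^n\rightarrow \co_\mathfrak{p}/\mathfrak{p}^n\co_\mathfrak{p}$. In other words, $\co_K\rightarrow \co_\mathfrak{p}/\mathfrak{p}^n\co_\mathfrak{p}$ is surjective with kernel $\mathfrak{p}^n$, which is what the ``in particular'' clause uses. I would factor the map as
\[
\co_K\longrightarrow \co_{(\mathfrak{p})}/\mathfrak{p}^n\co_{(\mathfrak{p})}\longrightarrow \co_\mathfrak{p}/\mathfrak{p}^n\co_\mathfrak{p}
\]
and prove that each stage induces an isomorphism.

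\emph{First stage: $\co_K/\mathfrak{p}^n\cong \co_{(\mathfrak{p})}/\mathfrak{p}^n\co_{(\mathfrak{p})}$.}
\begin{itemize}
\item The ring $\co_K/\mathfrak{p}^n$ is local with unique maximal ideal $\mathfrak{p}/\mathfrak{p}^n$. Indeed, any prime containing $\mathfrak{p}^n$ contains $\mathfrak{p}$, and $\mathfrak{p}$ is maximal since $\co_K$ is Dedekind.
\item Hence every $s\in\co_K\setminus\mathfrak{p}$ is invertible modulo $\mathfrak{p}^n$: there is $t\in\co_K$ with $st\equiv 1\pmod{\mathfrak{p}^n}$.
\item Surjectivity: the class of $a/s$ is the image of $at$. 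Since $s(at)-a=a(st-1)\in\mathfrak{p}^n$, we get $a/s-at\in\mathfrak{p}^n\co_{(\mathfrak{p})}$.
\item Injectivity: suppose $a\in\co_K\cap\mathfrak{p}^n\co_{(\mathfrak{p})}$. Then $sa\in\mathfrak{p}^n$ for some $s\notin\mathfrak{p}$. Comparing $\mathfrak{p}$-adic valuations (unique factorization of ideals) gives $v_\mathfrak{p}(a)\ge n$, so $a\in\mathfrak{p}^n$.
\end{itemize}

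\emph{Second stage: $\co_{(\mathfrak{p})}/\mathfrak{p}^n\co_{(\mathfrak{p})}\cong\co_\mathfrak{p}/\mathfrak{p}^n\co_\mathfrak{p}$.} By definition, $\co_\mathfrak{p}=\varprojlim_i \co_{(\mathfrak{p})}/\mathfrak{p}^i\co_{(\mathfrak{p})}$, and I would use the projection to the $n$-th component.
\begin{itemize}
\item The projection is surjective, because the transition maps are surjective. Concretely, a class $c$ modulo $\mathfrak{p}^n$ lifts to the compatible sequence given by the images of a fixed representative of $c$.
\item The diagonal embedding $\co_{(\mathfrak{p})}\to\co_\mathfrak{p}$ followed by this projection is the quotient map.
\item The kernel of the projection consists of the compatible sequences whose first $n$ components vanish, i.e.\ the elements of $\co_\mathfrak{p}$ with $v_\mathfrak{p}\ge n$.
\item It remains to identify this kernel with $\mathfrak{p}^n\co_\mathfrak{p}=\pi_\mathfrak{p}^n\co_\mathfrak{p}$. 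Here I take $\pi_\mathfrak{p}\in\co_K$ with $v_\mathfrak{p}(\pi_\mathfrak{p})=1$, which exists by the Chinese remainder theorem.
\end{itemize}

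\emph{Main obstacle.} The one non-formal point is the identification $\ker(\co_\mathfrak{p}\to\co_{(\mathfrak{p})}/\mathfrak{p}^n)=\pi_\mathfrak{p}^n\co_\mathfrak{p}$. I would argue as follows.
\begin{itemize}
\item Write $\co_{(\mathfrak{p})}=\pi_\mathfrak{p}^k u$ with $u$ a unit for each nonzero element, so that $\mathfrak{p}^i\co_{(\mathfrak{p})}=\pi_\mathfrak{p}^i\co_{(\mathfrak{p})}$.
\item Let $x=(x_i)_i$ be a compatible sequence with $x_i=0$ for $i\le n$.
\item For each $i$, choose $y_i\in\co_{(\mathfrak{p})}/\mathfrak{p}^{i}$ with $\pi_\mathfrak{p}^n y_i=x_{i+n}$. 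This is possible because $x_{i+n}$ lies in $\pi_\mathfrak{p}^n\co_{(\mathfrak{p})}/\pi_\mathfrak{p}^{i+n}$.
\item The $y_i$ are uniquely determined, since multiplication by $\pi_\mathfrak{p}^n$ is injective from $\co_{(\mathfrak{p})}/\pi_\mathfrak{p}^i$ to $\co_{(\mathfrak{p})}/\pi_\mathfrak{p}^{i+n}$.
\item By this uniqueness the $y_i$ are compatible, so they define $y\in\co_\mathfrak{p}$ with $x=\pi_\mathfrak{p}^n y$.
\end{itemize}
Composing the two stages gives the stated isomorphism. The approximation statement then follows immediately: any $\alpha\in\co_\mathfrak{p}$ has a preimage $\beta\in\co_K$ of its class modulo $\mathfrak{p}^n\co_\mathfrak{p}$.
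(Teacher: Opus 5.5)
Your proof is correct and follows the same route as the paper. You factor through $\co_{(\mathfrak{p})}/\mathfrak{p}^n\co_{(\mathfrak{p})}$, get surjectivity from the invertibility of $s\notin\mathfrak{p}$ modulo $\mathfrak{p}^n$ and injectivity from $\mathfrak{p}^n\co_{(\mathfrak{p})}\cap\co_K=\mathfrak{p}^n$, then pass to the completion; your reading of the statement as an isomorphism $\co_K/\mathfrak{p}^n\to\co_\mathfrak{p}/\mathfrak{p}^n\co_\mathfrak{p}$ is what the paper actually proves, and your inverse-limit argument supplies details for the second stage that the paper only asserts.
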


\begin{proof}
    For each fixed $n\geq 1$, consider the natural map $\co_K\rightarrow \co_{(\mathfrak{p})}/\mathfrak{p}^n\co_{(\mathfrak{p})}$. The kernel of this map is $\mathfrak{p}^n\co_{(\mathfrak{p})}\cap \co_K=\mathfrak{p}^n$, so the map is injective. To prove surjective, pick an element $a/s$ modulo $\mathfrak{p}^n$ in $\co_{(\mathfrak{p})}/\mathfrak{p}^n\co_{(\mathfrak{p})}$. As $s$ is invertible modulo $\mathfrak{p}^n$, there exists $t\in \co_K$ such that $st\equiv 1 \pmod {\mathfrak{p}^n}$. Then, $a/s \equiv at \pmod {\mathfrak{p}^n}$, and $at\in \co_K$. Thus, the map is surjective, so $\co_K/\mathfrak{p}^n\co_K\simeq \co_{(\mathfrak{p})}/\mathfrak{p}^n\co_{(\mathfrak{p})}$. 

    By passing to the completion, we get an isomorphism $\co_{(\mathfrak{p})}/\mathfrak{p}^n\co_{(\mathfrak{p})} \rightarrow \co_\mathfrak{p}/\mathfrak{p}^n\co_\mathfrak{p}$. Therefore, the composition of the two isomorphisms yields the required isomorphism. 
\end{proof}

\begin{note}
    Throughout the paper, we will use the standard notations from analytic number theory--for nonnegative functions $f(x),g(x)$, we write $f(x)=O(g(x))$ or $f(x)\ll g(x)$ if there is a constant $C$ such that $f(x)\leq C g(x)$ holds for all sufficiently large $x$. Further, we will use $f(x)=O_{\ell}(g(x))$ or $f(x)\ll_{\ell} g(x)$ to indicate that the implied constant $C$ depends on parameter(s) $\ell$.
\end{note}

\section{Escalation of diagonal forms}\label{S3}

In this section, we develop the escalation algorithm for diagonal forms of higher degree over a totally real number field $K$. Throughout this section, $m\geq 2$ is an even integer, and $K$ is a totally real number field. By a criterion set, we always mean a finite criterion set.

The theory of escalations for diagonal quadratic $(m=2)$ forms over number fields was independently developed in \cite[Section $4$]{kr}. Our definition of an escalation is simpler than their definition of \enquote{pseudoescalation}, and in general, our presentations differ in many details. The difference between the definitions is essentially explained in \cite[Proposition $4.5$]{kr}.

\begin{definition}
    Let $K$ be a totally real number field and $m\geq 2$ be an even integer. We say that a finite set $\mathcal{C}_K^{(m)}\subset \co_K^+/\co_K^{\times m}$ is a criterion set for diagonal $m$-ic forms over $K$ if for every totally positive definite diagonal $m$-ic form $Q$ over $K$ the following statements are equivalent
    \begin{enumerate}
        \item $Q$ is universal,
        \item $Q$ represents all elements of $\mathcal{C}_K^{(m)}$.
    \end{enumerate}
\end{definition}

We are writing $\mathcal{C}_K^{(m)}$ to emphasize that the criterion sets depend on the fixed number field $K$ and $m$.

\begin{definition}
    Let $Q$ be a totally positive definite $m$-ic form over $K$. We say that $\alpha\in \co_K^+/\co_K^{\times m}$ is a truant of $Q$ if $Q$ does not represent $\alpha$, and $Q$ represents all $\beta\in \co_K^+/\co_K^{\times m}$ with $\N(\beta)<\N(\alpha)$. 
\end{definition}

It is obvious from the definition that a truant of $Q$ need not be unique. For example, every totally positive unit $\varepsilon\notin \co_K^{\times m}$ is a truant of the form $Q(x)=x^m$. On the other hand, all truants always have the same norm.

\begin{definition}\label{def:critical}
    We say that an element $\alpha\in \co_K^+/\co_K^{\times m}$ is a critical element if there exists a totally positive definite diagonal $m$-ic form $Q$ over $K$ such that $Q$ does not represent $\alpha$, and $Q$ represents all $(\co_K^+/\co_K^{\times m})\setminus \{\alpha\}$. Throughout the paper, we denote by $\overline{\mathcal{C}}_K^{(m)}$ the set of all critical elements. 
\end{definition}

Observe that, given one criterion set $\mathcal{C}_K^{(m)}$, we can obtain infinitely many other criterion sets--for instance, by adding finitely many totally positive integers. However, critical elements lie in every criterion set; thus, $\overline{\mathcal{C}}_K^{(m)}$ is the unique minimal criterion set.

Let us now define the escalation of a diagonal form. Throughout this paper, we denote by $Q_0$ the empty form of rank $0$. This is an $m$-ic form that represents only $0$.

\begin{conv}
    For simplicity, we consider the empty form $Q_0$ as a totally positive definite diagonal $m$-ic form.
\end{conv}

\begin{definition}
   Let $Q$ be a totally positive definite diagonal $m$-ic form over $K$. Assume that $Q$ is non-universal with a truant $\alpha$. An escalation of $Q$ at $\alpha$ is a diagonal $m$-ic form $Q'=Q\perp \langle \beta\rangle$ with $\beta \in \co_K^+/\co_K^{\times m}$ such that $Q'$ represents $\alpha$. If $Q$ is universal, then its only escalation is $Q$ itself. 
\end{definition}

Note that there could be several escalations at the chosen truant. For example, the form $\langle 1\rangle$ over $K=\Q$ has truant $2$, and there are two possible escalations of this form at $2$, namely $\langle 1,1\rangle$ and $\langle 1,2\rangle$. Before we proceed further, for simplicity, we define several informal notions that we will use throughout the paper.

\medskip

\textbf{New Notation.} Let $Q$ be a totally positive definite diagonal $m$-ic form over $K$ of rank $n$. Define:

\begin{itemize}
    \item $\operatorname{Rep}(Q)=\{\alpha\in \co_K^+/\co_K^{\times m}\mid \exists\, x\in \co_K^n \text{ such that }Q(x)=\alpha \}\cup \{0\}$.
    \item $\operatorname{Unrep}(Q)=(\co_K^+/\co_K^{\times m})\setminus \operatorname{Rep}(Q)$.
    \item $\min \mathrm{N}(\operatorname{Unrep}(Q))=\min\{\N(\alpha)\mid \alpha\in \operatorname{Unrep}(Q)\}$.
    \item $T(Q)=\{\alpha\in \co_K^+/\co_K^{\times m}\mid \alpha \text{ is a truant of } Q\}$.
    \item If $Q$ is non-universal with a truant $\alpha$, then we denote by
    $$E(Q,\alpha)=\{\beta\in \co_K^+/\co_K^{\times m}\mid Q\perp\langle \beta\rangle \text{ represents } \alpha\},$$
    and by 
    $$E(Q)=\bigcup_{\alpha\in T(Q)} E(Q,\alpha).$$
\end{itemize}

Let us remind the reader that for a class $\beta\in \co
_K^+/\co_K^{\times m}$, the forms $\beta x^m$ and $\beta\varepsilon^m x^m$ are equivalent, so they represent the same elements. Therefore, it essentially does not matter which representative we choose. 

\begin{lemma}\label{lem:3.6}
    Let $m\geq 2$ be an even integer, $K$ a totally real number field, and $Q$ a totally positive definite non-universal diagonal $m$-ic form over $K$ of rank $n$ with a truant $\alpha$. Then: 
    \begin{enumerate}
        \item A class $\beta\in E(Q,\alpha)$ if and only if there exists $y\in \co_K\setminus\{0\}$ such that $\beta y^m\preceq \alpha$.

        \item For $\beta\in E(Q,\alpha)$, the rank of $Q\perp \langle\beta\rangle$ is $n+1$, and $\N(\beta)\leq \mathrm{N}(\alpha)$.
    \end{enumerate}
\end{lemma}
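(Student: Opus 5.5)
The plan is to work directly from the definitions of truant and escalation, using only two facts. First, a totally positive definite diagonal form takes values in $\co_K^+\cup\{0\}$. Second, the norm is monotone with respect to $\preceq$ on totally positive elements. Throughout I fix a representative of the class $\beta$. This is harmless: replacing $\beta$ by $\beta\varepsilon^m$ and $y$ by $\varepsilon^{-1}y$ leaves $\beta y^m$ unchanged, and by homogeneity it does not affect representability.

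For the forward direction of (1), suppose $Q\perp\langle\beta\rangle$ represents $\alpha$, say $\alpha=Q(x)+\beta y^m$ with $x\in\co_K^n$ and $y\in\co_K$. If $y=0$, then $Q$ would represent $\alpha$, contradicting that $\alpha$ is a truant. Hence $y\neq 0$. Since every $a_i\succ 0$ and $m$ is even, $Q(x)$ is either $0$ or totally positive. Therefore $\alpha-\beta y^m=Q(x)\succeq 0$, which gives $\beta y^m\preceq\alpha$. This also covers the empty form $Q_0$, for which $Q(x)=0$.

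For the converse, let $y\neq 0$ satisfy $\beta y^m\preceq\alpha$. If $\beta y^m=\alpha$, take $x=0$. Otherwise $\gamma:=\alpha-\beta y^m$ is totally positive, and for each embedding we have $0<\sigma_i(\gamma)<\sigma_i(\alpha)$, because $\sigma_i(\beta y^m)>0$. Taking products gives $\N(\gamma)<\N(\alpha)$. Since $\alpha$ is a truant, $Q$ represents the class of $\gamma$, so $Q(x)=\gamma\varepsilon^m$ for some unit $\varepsilon$. Then $Q(\varepsilon^{-1}x)=\gamma$ by homogeneity, and so $\alpha=Q(\varepsilon^{-1}x)+\beta y^m$. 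Thus $\beta\in E(Q,\alpha)$. The only point needing care here is the strictness of the norm inequality, which is exactly where the truant hypothesis is used.

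For (2), the rank statement is immediate, since $Q\perp\langle\beta\rangle$ adds one variable to the $n$ variables of $Q$. For the norm bound, take $y\neq 0$ from (1). The relation $\beta y^m\preceq\alpha$ gives $0<\sigma_i(\beta)\sigma_i(y)^m\le\sigma_i(\alpha)$ for all $i$. Multiplying these yields $\N(\beta)\,\N(y)^m\le\N(\alpha)$. Since $y$ is a nonzero algebraic integer, $\N(y)$ is a nonzero rational integer, and as $m$ is even, $\N(y)^m\ge 1$. Hence $\N(\beta)\le\N(\alpha)$. I expect no real obstacle in this lemma; the main thing to keep track of is that all statements are invariant under the choice of class representatives.
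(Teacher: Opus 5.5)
Your proof is correct and matches the paper's argument step by step. The forward direction rules out $y=0$ via the truant property and uses $Q(x)\succeq 0$, the converse uses $\N(\alpha-\beta y^m)<\N(\alpha)$ together with the definition of a truant, and (2) uses $\N(y)^m\geq 1$; your separate treatment of $\beta y^m=\alpha$ and of class representatives only makes explicit what the paper leaves implicit through $0\in\operatorname{Rep}(Q)$ and homogeneity.
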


\begin{proof}
    Let us first prove (1). Suppose $\beta\in E(Q,\alpha)$, i.e., $Q\perp \langle \beta \rangle$ represents $\alpha$. Then, by definition, there exist $x\in \co_K^n$ and $y\in \co_K$ such that 
        \[
        \alpha=Q(x)+\beta y^m.
        \]
    If $y=0$, then we get a contradiction to the fact that $\alpha$ is a truant of $Q$. Therefore, $y\neq 0$, so $\alpha-\beta y^m=Q(x)\succeq 0$. Hence, we have $\beta y^m\preceq \alpha$. Conversely, suppose that there exists $y\in \co_K\setminus \{0\}$ such that $\beta y^m\preceq \alpha$. Then, there exists $\delta \succeq 0$ such that $\alpha =\beta y^m+\delta$. Since $\beta y^m\succ 0$, we have $\alpha\succ \delta$, and thus $\N(\alpha)>\N(\delta)$. By definition, it follows that $\delta\in \operatorname{Rep}(Q)$. This completes the proof of (1).

    For (2), it is clear that the rank of $Q\perp\langle\beta\rangle$ is $n+1$ as we add an extra coefficient $\beta$. For the norm inequality; since $\beta \in E(Q,\alpha)$, by (1) there exists $y\in \co_K\setminus\{0\}$ such that $\beta y^m\preceq \alpha$. Thus, $\N(\alpha)\geq \N(\beta y^m)=\N(\beta)\N(y)^m\geq \N(\beta)$ as $m$ is even.
\end{proof}

Let us now show that for a given form $Q$, the sets $T(Q)$ and $E(Q)$ are both finite. Hence, the set of all escalations is finite. Note that the finiteness of both of these sets is clear if $Q$ is universal.

\begin{lemma}\label{finitenessof truant}
     Let $m\geq 2$ be an even integer, $K$ a totally real number field, and $Q$ a totally positive definite non-universal diagonal $m$-ic form over $K$. Then:
     \begin{enumerate}
         \item $T(Q)$ is finite.
         \item For each $\alpha \in T(Q)$, the set $E(Q,\alpha)\neq \emptyset$ is finite.
         \item $E(Q)$ is finite.
     \end{enumerate}
\end{lemma}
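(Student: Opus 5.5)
The proof reduces everything to one classical finiteness fact: for a fixed bound $B$, the set of classes in $\co_K^+/\co_K^{\times m}$ of norm at most $B$ is finite. Granting this, all three parts follow from norm bounds already implicit in Lemma \ref{lem:3.6}.

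\textbf{The finiteness fact.} There are only finitely many principal ideals $(\gamma)$ with $\N(\gamma)\le B$. Two generators of the same ideal differ by a unit, so it remains to show that the totally positive generators of a fixed principal ideal split into finitely many classes modulo $\co_K^{\times m}$. They form a union of cosets of $\co_K^{\times,+}$ (totally positive units), and by Dirichlet's unit theorem the index $[\co_K^{\times,+}:\co_K^{\times m}]\le[\co_K^\times:\co_K^{\times m}]$ is finite, since $\co_K^\times$ is finitely generated. This is the only step with real content, and it is standard.

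\textbf{Part (1).} Since $Q$ is non-universal, $\operatorname{Unrep}(Q)\neq\emptyset$. Norms of nonzero elements of $\co_K$ are positive integers, so $\min\N(\operatorname{Unrep}(Q))$ is attained; call it $N_0$. By definition every truant has norm exactly $N_0$, and conversely every unrepresented class of norm $N_0$ is a truant. In particular $T(Q)\neq\emptyset$, and $T(Q)$ is contained in the set of classes of norm at most $N_0$, which is finite.

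\textbf{Part (2).} For nonemptiness, take $\beta=\alpha$ and $y=1$; then $\beta y^m=\alpha\preceq\alpha$, so $\alpha\in E(Q,\alpha)$ by Lemma \ref{lem:3.6}(1). For finiteness, Lemma \ref{lem:3.6}(2) gives $\N(\beta)\le\N(\alpha)$ for every $\beta\in E(Q,\alpha)$, so $E(Q,\alpha)$ lies in the finite set of classes of norm at most $\N(\alpha)$.

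\textbf{Part (3).} By part (1), $E(Q)$ is a finite union of the sets $E(Q,\alpha)$, each of which is finite by part (2). Alternatively, $E(Q)$ is directly contained in the set of classes of norm at most $N_0$.
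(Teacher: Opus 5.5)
Your proof is correct and follows the paper's argument: all truants share the norm $\min\N(\operatorname{Unrep}(Q))$, Lemma \ref{lem:3.6} bounds $\N(\beta)$ for every $\beta\in E(Q,\alpha)$ by that same value, and finiteness of the set of classes of bounded norm then gives all three parts. The only difference is that you also justify that finiteness fact (finitely many principal ideals of bounded norm, and the finite index $[\co_K^{\times,+}:\co_K^{\times m}]$), which the paper uses without proof.
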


\begin{proof}
     \begin{enumerate}
         \item By definition, all truants have the same norm $\min \mathrm{N}(\operatorname{Unrep}(Q))$. Since the set of all elements in $\co_K^+/\co
         _K^{\times m}$ with norm $\leq \min \mathrm{N}(\operatorname{Unrep}(Q))$ is finite, the set $T(Q)$ is finite.

         \item Let us fix an $\alpha\in T(Q)$. It is clear that $E(Q,\alpha)\neq \emptyset$ as $\alpha$ always belongs to $E(Q,\alpha)$. By Lemma \ref{lem:3.6}, we have 
         \[
        \begin{aligned}
        E(Q,\alpha)
        &=\{\beta \in \co_K^+/\co_K^{\times m}
        \mid \beta y^m\preceq \alpha
        \text{ for some } y\in \co_K\setminus\{0\}\}\\
        &\subseteq
        \{\beta \in \co_K^+/\co_K^{\times m}
        \mid \N(\beta)\leq \min \mathrm{N}(\operatorname{Unrep}(Q))\}.
        \end{aligned}
        \]
         Again, the later set is finite, so $E(Q,\alpha)$ is also finite.

         \item Note that $$E(Q)=\bigcup_{\alpha\in T(Q)} E(Q,\alpha).$$ Since $T(Q)$ and $E(Q,\alpha)$ are both finite, it follows that $E(Q)$ is finite.
     \end{enumerate}
\end{proof}

\begin{definition}
    Let $Q=\langle a_1,a_2,\dots,a_n\rangle$ be a totally positive definite diagonal $m$-ic form over $K$, and $I\subseteq \{1,2,\dots,n\}$. The form $$Q_I=\langle a_i\mid i\in I\rangle$$ is called a subform of $Q$.
\end{definition}

Note that, by definition, $\operatorname{Rep}(Q_I)\subseteq \operatorname{Rep}(Q)$ for all $I\subseteq\{1,2,\dots,n\}$. In particular, if a subform $Q_I$ is universal, then the form $Q$ is also universal.

\begin{lemma}\label{lem:3.9}
    Let $m\geq 2$ be an even integer, $K$ a totally real number field, $Q=\langle a_1,a_2,\dots,a_n\rangle$ a totally positive definite diagonal $m$-ic form over $K$, $I\subsetneq \{1,2,\dots,n\}$, and $Q_I$ a subform of $Q$ that is not universal. Suppose $\alpha\in T(Q_I)$ is a truant and $Q$ represents $\alpha$. Then, there exists $j\in \{1,2,\dots,n\}\setminus I$ such that $a_j\in E(Q_I,\alpha)$.
\end{lemma}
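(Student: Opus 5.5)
Since $Q$ represents $\alpha$, we will pick a representation and split it along $I$ and its complement. Then we find a coordinate outside $I$ that is nonzero and apply Lemma \ref{lem:3.6}(1).

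Concretely, we choose $x\in\co_K^n$ with
\[
\alpha=Q(x)=\sum_{i\in I}a_ix_i^m+\sum_{j\notin I}a_jx_j^m=Q_I(x_I)+\sum_{j\notin I}a_jx_j^m.
\]

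We first claim that $x_j\neq 0$ for some $j\notin I$. If every such $x_j$ vanished, then $\alpha=Q_I(x_I)$, so $Q_I$ would represent $\alpha$. This contradicts $\alpha\in T(Q_I)$, since a truant is by definition not represented by $Q_I$.

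Now fix $j\notin I$ with $x_j\neq0$. Every term $a_ix_i^m$ is totally nonnegative, because $a_i\succ 0$ by total positive definiteness and $m$ is even. Hence
\[
\alpha-a_jx_j^m=\sum_{i\neq j}a_ix_i^m\succeq 0,
\]
that is, $a_jx_j^m\preceq\alpha$ with $x_j\in\co_K\setminus\{0\}$.

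By Lemma \ref{lem:3.6}(1), applied to the non-universal form $Q_I$ with truant $\alpha$, the class of $a_j$ lies in $E(Q_I,\alpha)$. Here we regard $a_j\in\co_K^+$ as its class in $\co_K^+/\co_K^{\times m}$; this is harmless, since the condition $\beta y^m\preceq\alpha$ is insensitive to changing the representative $\beta\mapsto\beta\varepsilon^m$ once $y$ is replaced by $y\varepsilon^{-1}$.

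There is essentially no obstacle. The only points needing care are that $Q_I$ may be the empty form when $I=\emptyset$, which is covered by the convention that $Q_0$ represents only $0$, and that the sum of totally nonnegative terms is totally nonnegative, which holds trivially.
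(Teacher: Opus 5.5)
Your proposal is correct and takes essentially the same route as the paper. You split a representation of $\alpha$ along $I$, use $\alpha\notin\operatorname{Rep}(Q_I)$ to find $j\notin I$ with $x_j\neq 0$ and $a_jx_j^m\preceq\alpha$, and conclude via Lemma \ref{lem:3.6}(1); your justification that the remaining terms are totally nonnegative, and your remark that the class of $a_j$ does not depend on the chosen representative, fill in details the paper leaves implicit.
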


\begin{proof}
    Since $Q$ represents $\alpha$, there exists $x\in \co_K^n$ such that $Q(x)=\alpha$, i.e., 
    \[
    \alpha=Q(x)=\sum_{i=1}^n a_ix_i^m=\sum_{i\in I} a_ix_i^m+\sum_{i\notin I} a_ix_i^m.
    \]
    Note that the element $\sum_{i\in I} a_ix_i^m\in \operatorname{Rep}(Q_I)$. As $\alpha\notin \operatorname{Rep}(Q_I)$, there exists $j\in \{1,2,\dots,n\}\setminus I$ such that $x_j\neq 0$ and $a_jx_j^m\preceq \alpha$. Therefore, $a_j\in E(Q_I,\alpha)$ thanks to Lemma \ref{lem:3.6}.
\end{proof}

We now create an algorithm that provides a constructive method of searching for universal diagonal $m$-ic forms. This algorithm indeed provides all \emph{proper} universal diagonal $m$-ic forms (i.e., none of its subform is universal).

\begin{algorithm}[H]
\caption{(Escalation of diagonal $m$-ic forms)}
\label{alg:escalation}
\begin{algorithmic}[1]
    \Input A totally real number field $K$ and an even integer $m\ge 2$.
    \Output A set $\mathcal{U}$ of totally positive definite diagonal $m$-ic forms over $K$ containing all proper universal forms.
    \State Initialize the set of forms to process: $\mathcal{Q} \gets \{Q_0\}$, where $Q_0$ is the empty form.
    \State Initialize the set of universal forms found: $\mathcal{U} \gets \emptyset$.
    \While{$\mathcal{Q} \neq \emptyset$}
        \State Choose and remove a form $Q$ from $\mathcal{Q}$.
        \State $T \gets T(Q)$ 
             \Comment{$T := \{\,\alpha \in (\co_K^+/\co_K^{\times m}) \setminus \operatorname{Rep}(Q) \mid \N(\alpha) = \min_{\beta \in (\co_K^+/\co_K^{\times m})\setminus \operatorname{Rep}(Q)} \operatorname{N}(\beta)\,\}$}
        \If{$T = \emptyset$}
            \State $\mathcal{U} \gets \mathcal{U} \cup \{Q\}$ \Comment{$Q$ represents all of $\co_K^+/\co_K^{\times m}$, hence is universal}
        \Else
            \ForAll{$\alpha \in T$}
                \ForAll{ $\beta\in E(Q,\alpha)$ }
                    \State $\mathcal{Q} \gets \mathcal{Q}\cup \{Q\perp \langle \beta\rangle\}$
                \EndFor    
            \EndFor
        \EndIf
    \EndWhile
    \State \Return $\mathcal{U}$
\end{algorithmic}
\end{algorithm}

\begin{rem}
\noindent
\begin{enumerate}
    \item While $T(Q)$ exists and is finite (Lemma \ref{finitenessof truant}), its computation is problematic. If the form $Q$ is universal, the algorithm should return $T(Q)=\emptyset$; but, as written, the algorithm does not recognize such a situation. Nevertheless, Algorithm \ref{alg:escalation} is an important theoretical tool, and one must check universality by other means. A prominent way to check for universality is to reduce the problem of checking all elements to just checking up to some bound. In our case, we will accomplish this using the asymptotic local--global principle (Corollary \ref{cor:main}), which we will prove later. 

    \item The for loop ($11$--$13$) can be made much more efficient by computing $E(Q,\alpha)$ a single $\alpha\in T$. By doing so, we can hope that many of the other truants are already represented by the single $\alpha$ escalation. However, as we mentioned earlier, this process might miss many forms, even though it will generate all critical elements.

    \item Let us also mention that both loops in the algorithm are finite, as $T(Q)$ and $E(Q,\alpha)$ are finite.
\end{enumerate}
\end{rem}

In the next proposition, we will prove that if Algorithm \ref{alg:escalation} terminates, then there exists a finite criterion set.

\begin{proposition}\label{lem:finitecriterionset}
Let $K$ be a totally real number field and $m\geq 2$ be an even integer. Assume that Algorithm \ref{alg:escalation} terminates after processing finitely many diagonal $m$-ic forms.
Let
\[
\mathcal{C}(K,m)=\bigcup_{Q \text{ processed by Algorithm \ref{alg:escalation}}} T(Q)
\]
be the union of all truant sets that appeared during the execution.
Then, $\mathcal{C}(K,m)$ is a finite criterion set for diagonal $m$-ic forms over $K$.
\end{proposition}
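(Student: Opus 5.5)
Finiteness of $\mathcal{C}(K,m)$ comes first. By assumption only finitely many forms are processed, and each $T(Q)$ is finite by Lemma \ref{finitenessof truant}(1), so the union is finite. One direction of the criterion property is trivial: a universal form represents every element of $\co_K^+/\co_K^{\times m}$, in particular every element of $\mathcal{C}(K,m)$. The content is the converse. Let $Q=\langle a_1,\dots,a_n\rangle$ be a totally positive definite diagonal $m$-ic form representing every element of $\mathcal{C}(K,m)$; we must show $Q$ is universal.

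The plan is to trace $Q$ through the escalation tree by building a chain of subforms
\[
Q_0=Q_{I_0},\;Q_{I_1},\;Q_{I_2},\;\dots
\]
with $I_0=\emptyset\subsetneq I_1\subsetneq I_2\subsetneq\cdots\subseteq\{1,\dots,n\}$, each $Q_{I_k}$ equivalent to a form processed by Algorithm \ref{alg:escalation}. The base case holds because $Q_{\emptyset}$ is the empty form $Q_0$, the initial input. For the inductive step, suppose $Q_{I_k}$ is (up to equivalence, and a reordering of coefficients, which does not affect representation) a processed form.

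If $Q_{I_k}$ is universal, then $Q\supseteq Q_{I_k}$ is universal and we stop. Otherwise pick any truant $\alpha\in T(Q_{I_k})\subseteq\mathcal{C}(K,m)$. By hypothesis $Q$ represents $\alpha$, and $I_k\neq\{1,\dots,n\}$ since $Q_{I_k}$ does not represent $\alpha$. So Lemma \ref{lem:3.9} gives $j\notin I_k$ with $a_j\in E(Q_{I_k},\alpha)$. The algorithm adds $Q_{I_k}\perp\langle a_j\rangle$ to $\mathcal{Q}$ for every $\beta\in E(Q,\alpha)$ and every truant, and termination means every added form is eventually processed. Hence $Q_{I_{k+1}}$ with $I_{k+1}=I_k\cup\{j\}$ is a processed form (using that the class of $a_j$ in $\co_K^+/\co_K^{\times m}$ gives an equivalent form).

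Since the $I_k$ strictly increase inside a finite set, the process must stop, and it can only stop at a universal $Q_{I_k}$, so $Q$ is universal.

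The main point requiring care is bookkeeping rather than difficulty. Processed forms are only defined up to equivalence and ordering of the diagonal coefficients, so one must check that $Q_{I_k}$ and the processed form have the same $\operatorname{Rep}$, $T$ and $E$ sets. This is immediate from the remarks on $\langle\beta\rangle$ being well defined on classes. One must also note that "processed" includes forms later found universal; equivalently, termination means that all forms ever inserted into $\mathcal{Q}$ are processed.
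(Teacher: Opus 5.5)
Your proof is correct and follows the paper's argument essentially step for step. Finiteness comes from termination together with Lemma~\ref{finitenessof truant}, and the converse direction climbs the escalation tree through subforms $Q_{I_k}$ using Lemma~\ref{lem:3.9} until a universal subform is reached; note only the minor slip that the inserted forms are $Q_{I_k}\perp\langle\beta\rangle$ with $\beta\in E(Q_{I_k},\alpha)$, not $\beta\in E(Q,\alpha)$.
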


\begin{proof}
Since Algorithm \ref{alg:escalation} terminates after processing finitely many diagonal $m$-ic forms, it follows that $\mathcal{C}(K,m)$ is finite thanks to Lemma \ref{finitenessof truant}. It remains to show that $\mathcal{C}(K,m)$ is a criterion set, i.e., a totally positive definite diagonal $m$-ic form $Q$ is universal if and only if it represents all elements of $\mathcal{C}(K,m)$.

It is obvious that if $Q$ is universal, then it represents all elements of $\mathcal{C}(K,m)$. Conversely, assume that $Q=\langle a_1,\dots,a_n\rangle$ represents all elements of $\mathcal{C}(K,m)$. We want to show that $Q$ is universal. 

We construct 
\[
\emptyset=I_0\subsetneq I_1\subsetneq I_2\subsetneq \cdots \subsetneq \{1,2,\dots,n\}
\]
such that every subform $Q_{I_j}$ is a form processed by Algorithm \ref{alg:escalation}. We claim that for every $j\geq 0$, either $Q_{I_j}$ is universal, or $j<n$ and we can construct $I_{j+1}$ with $I_j\subsetneq I_{j+1}\subset \{1,2,\dots,n\}$. Assume that $Q_{I_j}$ is processed. If $Q_{I_j}$ is universal, then there is nothing to prove, as then $Q$ is universal. Therefore, assume $Q_{I_j}$ is non-universal and choose a truant $\alpha\in T(Q_{I_j})\subseteq \mathcal{C}(K,m)$. Note that $Q$ represents $\alpha$ because $Q$ represents all elements of $\mathcal{C}(K,m)$ and $\alpha\in \mathcal{C}(K,m)$. By Lemma \ref{lem:3.9}, there exists $k\in \{1,2,\dots,n\}\setminus I_j$ such that $$Q_{I_j \cup \{k\}}\in\{Q_{I_j}\perp \langle\beta\rangle\mid \beta \in E(Q_{I_j}, \alpha)\}.$$ As the algorithm inserts every form corresponding to elements of  $E(Q_{I_j}, \alpha)$ into the queue $\mathcal{Q}$, and the algorithm terminates, $Q_{I_j \cup \{k\}}$ is also a processed form. Now, set $I_{j+1}=I_j\cup \{k\}$. Then, we have $|I_{j+1}|=|I_j|+1$. This completes the proof of the claim. 

Since $|I_j|\leq n$, after at most $n$ steps, the construction must stop. However, from the claim, the construction stops only when $Q_{I_j}$ is universal. Hence, $Q$ is universal, as we wanted. 
\end{proof}

\begin{rem}
Under the termination assumption that we are yet to prove, we can now say that there is at least one finite criterion set, namely 
\[
\mathcal{C}_K^{(m)}=\mathcal{C}(K,m)=\bigcup_{Q \text{ processed by Algorithm \ref{alg:escalation}}} T(Q).
\]
In the next several sections, we prepare to prove the asymptotic local–-global principle, and finally, using that in Section \ref{S9}, we prove that the algorithm indeed terminates.
\end{rem}

By an \emph{escalation path}, we mean a sequence of diagonal $m$-ic forms $Q_0 \,( \text{empty form}), Q_1,Q_2,\dots$ with $Q_{i+1}=Q_i\perp\langle \beta_{i+1}\rangle$, where $\beta_{i+1}\in E(Q_i,\alpha_i)$ for some truant $\alpha_i\in T(Q_i)$. The rank of $Q_i$ is $i$, and the path is infinite if $T(Q_i)\neq \emptyset$ for all $i$. Running Algorithm \ref{alg:escalation} yields a rooted tree, denoted throughout by $\mathcal{T}$, with root $Q_0$.

In the following lemma, we record some obvious properties of forms that appear in an escalation path. 

\begin{lemma}\label{lem:3.8}
    Let $Q_0, Q_1,Q_2,\dots$ with $Q_{i+1}=Q_i\perp\langle \beta_{i+1}\rangle$, where $\beta_{i+1}\in E(Q_i,\alpha_i)$ for some truant $\alpha_i\in T(Q_i)$. Then:
    \begin{enumerate}
        \item $\operatorname{Rep}(Q_i)\cup \{\alpha_{i+1}\}\subseteq \operatorname{Rep}(Q_{i+1})$, and $\alpha_{i+1}\notin \operatorname{Rep}(Q_i)$.
        \item The classes $\alpha_1,\alpha_2,\dots$ are pairwise distinct.
        \item The sequence of minima $\min\mathrm{N}(\operatorname{Unrep}(Q_i))$ is non-decreasing in $i$. 
        \item If the path is infinite, then $\min\mathrm{N}(\operatorname{Unrep}(Q_i)))\rightarrow\infty$ as $i\rightarrow\infty$.
    \end{enumerate}
\end{lemma}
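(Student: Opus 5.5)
We prove the four items in order, since each feeds the next. The indexing needs care: $\alpha_i\in T(Q_i)$ is the truant at which $Q_i$ is escalated, and $Q_{i+1}=Q_i\perp\langle\beta_{i+1}\rangle$ with $\beta_{i+1}\in E(Q_i,\alpha_i)$. So the element newly represented by $Q_{i+1}$ is $\alpha_i$. We read item (1) with this shift, i.e. $\operatorname{Rep}(Q_i)\cup\{\alpha_i\}\subseteq\operatorname{Rep}(Q_{i+1})$ and $\alpha_i\notin\operatorname{Rep}(Q_i)$, which is what the indices force.

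For (1), the inclusion $\operatorname{Rep}(Q_i)\subseteq\operatorname{Rep}(Q_{i+1})$ holds because $Q_i$ is a subform of $Q_{i+1}$: set the last variable to $0$. Moreover $\alpha_i\in\operatorname{Rep}(Q_{i+1})$ by the definition of $E(Q_i,\alpha_i)$. Finally, $\alpha_i\notin\operatorname{Rep}(Q_i)$ because a truant is by definition unrepresented.

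For (2), take $j<k$. By (1) and induction on the monotone chain $\operatorname{Rep}(Q_{j+1})\subseteq\cdots\subseteq\operatorname{Rep}(Q_k)$, we get $\alpha_j\in\operatorname{Rep}(Q_k)$. But $\alpha_k\notin\operatorname{Rep}(Q_k)$, so $\alpha_j\neq\alpha_k$.

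For (3), monotonicity of $\operatorname{Rep}$ gives $\operatorname{Unrep}(Q_{i+1})\subseteq\operatorname{Unrep}(Q_i)$. The minimum of the norm over a smaller set is at least as large, so the sequence is non-decreasing. If $Q_{i+1}$ is universal, we interpret its minimum as $+\infty$.

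For (4), which is the only step with content, we have $\N(\alpha_i)=\min\mathrm{N}(\operatorname{Unrep}(Q_i))$. Suppose the sequence of minima does not tend to $\infty$. Being non-decreasing and integer-valued, it is then bounded by some $B$. All the $\alpha_i$ then lie in the set of classes in $\co_K^+/\co_K^{\times m}$ of norm at most $B$. This set is finite, which is the same fact already used in Lemma \ref{finitenessof truant}: finitely many principal ideals of bounded norm, and for each generator only finitely many classes modulo $\co_K^{\times m}$, because $\co_K^\times/\co_K^{\times m}$ is finite by Dirichlet's unit theorem. However, by (2) the infinitely many $\alpha_i$ are pairwise distinct, a contradiction.

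The main (mild) obstacle is justifying this finiteness of bounded-norm classes modulo $m$th powers of units. We invoke it exactly as in the proof of Lemma \ref{finitenessof truant}.
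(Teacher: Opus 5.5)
Your proof is correct and follows the paper's argument item by item. You use monotonicity of $\operatorname{Rep}$ when a variable is added, distinctness of the truants because each earlier one becomes represented, and finiteness of the classes in $\co_K^+/\co_K^{\times m}$ of bounded norm for (4). Your re-indexing of (1) to $\alpha_i\in\operatorname{Rep}(Q_{i+1})$ is the right repair: with $\alpha_{i+1}\in T(Q_{i+1})$, the literal claim $\alpha_{i+1}\in\operatorname{Rep}(Q_{i+1})$ is false, and the paper's own proof mixes conventions (its (1)--(2) effectively treat $\alpha_{i+1}$ as a truant of $Q_i$, while its (4) uses $\N(\alpha_i)=\min\mathrm{N}(\operatorname{Unrep}(Q_i))$).
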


\begin{proof}
    \begin{enumerate}
        \item By definition, $Q_{i+1}=Q_i\perp\langle \beta_{i+1}\rangle$,  where $\beta_{i+1}\in E(Q_i,\alpha_i)$ for $\alpha_i\in T(Q_i)$, so $\operatorname{Rep}(Q_i)\subseteq \operatorname{Rep}(Q_{i+1})$. Also, by the definition of $E(Q_i,\alpha_i)$, the form $Q_{i+1}=Q_i\perp\langle \beta_{i+1}\rangle$ represents $\alpha_{i+1}$, so $\alpha_{i+1}\in \operatorname{Rep}(Q_{i+1})$. Since $\alpha_{i+1}\in T(Q_{i+1})$ and $Q_{i+1}=Q_i\perp\langle \beta_{i+1}\rangle$, it follows that $\alpha_{i+1}\notin \operatorname{Rep}(Q_i)$.

        \item Let $i<j$. Then, by (1), $\alpha_i\in \operatorname{Rep}(Q_{i})\subseteq \operatorname{Rep}(Q_{j-1})$, but $\alpha_j\notin \operatorname{Rep}(Q_{j-1})$. Therefore, $\alpha_i\neq \alpha_j$.

        \item By (1), we have $\operatorname{Unrep}(Q_{i+1})\subseteq \operatorname{Unrep}(Q_i)$, so the minimum satisfies $\min\mathrm{N}(\operatorname{Unrep}(Q_i)))\leq \min\mathrm{N}(\operatorname{Unrep}(Q_{i+1})))$ as required.

        \item Suppose there exists a $B>0$ such that $\min\mathrm{N}(\operatorname{Unrep}(Q_i)))\leq B$ for all $i$. Since $\N(\alpha_i)=\min\mathrm{N}(\operatorname{Unrep}(Q_i)))$, we have infinitely many distinct classes $\alpha_i$ with $\N(\alpha_i)\leq B$, which is impossible. Therefore, $\min\mathrm{N}(\operatorname{Unrep}(Q_i)))\rightarrow\infty$ as $i\rightarrow\infty$. \qedhere
    \end{enumerate}
\end{proof}

\section{Estimate from geometry of numbers} \label{S4}

As mentioned in the Introduction, the termination of the algorithm hinges primarily on an asymptotic local--global principle (Corollary \ref{cor:main}). To establish this principle, we first develop a connection between elements of sufficiently large norm and elements whose coordinates, with respect to a fixed integral basis, are sufficiently large.

This section is devoted to establishing this connection. More precisely, we prove that if $\alpha$ is a totally positive integer of sufficiently large norm, then there exists a unit $\varepsilon$ such that $\alpha\varepsilon^m$ has sufficiently large coordinates with respect to some integral basis. This is precisely the ingredient needed to relate the representation of elements of large norm by a diagonal $m$-ic form over $K$ to the representation of elements with sufficiently large coordinates by the corresponding system of integral $m$-ic forms obtained via the Weil restriction. Note that this phenomenon does not hold for all integral bases. We shall illustrate this with an example later. 

The following lemma can be viewed as a strengthening of \cite[Lemma $4.2$]{kp}, where only a lower bound is established. 

\begin{lemma}\label{lem:4.1}
    Let $m\geq 2$ be an even integer, and $K$ be a totally real number field of degree $d$. There exist constants $c_1',\, c_2'>0$, depending only on $m,\, K$ such that for every $\alpha\in\co_K^+$, there exists an $\varepsilon\in\co_K^{\times}$ such that the element $\beta:=\alpha\varepsilon^m$ satisfies 
    \[
    c_1'\N(\alpha)^{1/d}\leq \sigma_i(\beta)\leq c_2'\N(\alpha)^{1/d}
    \]
    for all embeddings $\sigma_i$.
\end{lemma}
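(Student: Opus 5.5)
The plan is to use Dirichlet's unit theorem via the logarithmic embedding. Consider the map
\[
L:K^\times\to\R^d,\qquad L(x)=(\log|\sigma_1(x)|,\dots,\log|\sigma_d(x)|).
\]
By Dirichlet's unit theorem, $L(\co_K^\times)$ is a full lattice $\Lambda$ in the trace-zero hyperplane $H=\{v\in\R^d\mid \sum_i v_i=0\}$. Consequently $mL(\co_K^\times)=m\Lambda$ is also a full lattice in $H$. Fix a bounded fundamental domain $F\subset H$ for $m\Lambda$; for instance, take the parallelepiped spanned by $m$ times a fixed basis of $\Lambda$. Then there is a constant $R>0$, depending only on $m$ and $K$, such that $|v_i|\leq R$ for every $v\in F$ and every $i$.

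Now let $\alpha\in\co_K^+$ and write
\[
L(\alpha)=\tfrac{1}{d}\log\N(\alpha)\cdot(1,\dots,1)+w,
\]
where $w\in H$; this decomposition is valid because the coordinates of $L(\alpha)$ sum to $\log\N(\alpha)$ and $\alpha$ is totally positive. Choose $\varepsilon\in\co_K^\times$ with $w+mL(\varepsilon)\in F$, which is possible since $F$ is a fundamental domain for $m\Lambda$. Put $\beta=\alpha\varepsilon^m$. Since $m$ is even, $\varepsilon^m\succ0$, so $\beta\succ0$ and
\[
\log\sigma_i(\beta)=\tfrac1d\log\N(\alpha)+v_i
\]
with $v\in F$ and $|v_i|\leq R$. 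Exponentiating gives
\[
e^{-R}\N(\alpha)^{1/d}\leq\sigma_i(\beta)\leq e^{R}\N(\alpha)^{1/d},
\]
so we may take $c_1'=e^{-R}$ and $c_2'=e^{R}$.

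The only point needing care is that the lattice is $m\Lambda$ rather than $\Lambda$, because we are only allowed to multiply by $m$th powers of units. Passing to $m\Lambda$ still gives a full-rank lattice, since $m\Lambda$ has finite index $m^{d-1}$ in $\Lambda$, so its fundamental domain remains bounded and the constant $R$ depends only on $m$ and $K$. For $d=1$ the statement is trivial: $H=0$ and $\beta=\alpha=\N(\alpha)$. There is no real obstacle beyond invoking Dirichlet's theorem correctly.
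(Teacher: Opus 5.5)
Your proof is correct and follows essentially the same route as the paper. Both split the logarithmic embedding into the norm direction plus a trace-zero part, and both translate that part into a bounded fundamental domain for the lattice of logarithms of $m$th powers of units (your $m\Lambda$ is exactly the paper's $\xi(\co_K^{\times m})$), then exponentiate the coordinate bounds.
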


\begin{proof}
    Let $\sigma_1,\dots,\sigma_d$ be real embeddings of $K$ into $\C$. Define the logarithmic map 
    \[
    L: \co_K^+\rightarrow \R^d: x\mapsto (\log \sigma_1(x),\dots, \log \sigma_d(x));
    \]
    and let 
    \[
    H=\left\{u\in \R^d\mid \sum_{i=1}^du_i=0\right\}
    \]
    be the trace zero hyperplane. For $x\in\co_K^+$, define $\xi(x)\in H$ by 
    \[
    L(x)=\frac{1}{d} \log \N(x)\cdot (1,1,\dots,1)+\xi(x).
    \]
    Observe that the map $\xi$ has the property that for $x,y\in\co_K^+$, $\xi(xy)=\xi(x)+\xi(y)$. Since $m$ is even, $\varepsilon^m\in \co_K^{\times,+}$ for all $\varepsilon\in\co_K^\times$. By the Dirichlet unit theorem, it follows that the image 
    \[
    \Lambda=\xi(\co_K^{\times m})\subset H
    \]
    is a full rank lattice, i.e., $\operatorname{rank}\Lambda=d-1=\dim H$. Consider the action of $\Lambda$ on $H$ by translation, i.e., for $\lambda\in\Lambda$ and $u\in H$, the action is $u\mapsto u+\lambda$; and choose a compact fundamental domain $\mathcal{F}\subset H$ for this action. For a given $1\leq i \leq d$, the coordinate projection 
    \[
    \pi_i: \mathcal F\rightarrow \R:(u_1,\dots,u_d)\mapsto u_i,
    \]
     is continuous, so by compactness, there exist $a,b\in\R$ such that 
    \begin{equation}\label{e:5.1}
        a\leq u_i\leq b,
    \end{equation}
    where $u=(u_1,\dots,u_d)\in \mathcal F$ and $1\leq i\leq d$. Set $c_1'=\exp(a)$ and $c_2'=\exp(b)$, then $0< c_1',c_2'<\infty$.

    Let $\alpha\in\co_K^+$ be arbitrary. By the definition of $\xi$, $\xi(\alpha)\in H$. Since $\mathcal F$ is a fundamental domain, there exists a unique $\lambda\in\Lambda$ such that $\xi(\alpha)+\lambda\in \mathcal{F}$. Thus, there exists an $\varepsilon\in\co_K^\times$ such that
    \[
    \xi(\beta)=\xi(\alpha\varepsilon^m)=\xi(\alpha)+\xi(\varepsilon^m)=\xi(\alpha)+\lambda\in \mathcal{F}.
    \]
    Since $\beta=\alpha\varepsilon^m$, we have $\N(\alpha)=\N(\beta)$. From the definition of $\xi$, it follows that 
    \[
    \sigma_i(\beta)=\N(\alpha)^{1/d}\exp(\xi(\beta)_i)
    \]
    for all $1\leq i\leq d$, where $\xi(\beta)_i$ is the $i$th coordinate of the vector $\xi(\beta)$. Applying the bounds from \eqref{e:5.1}, we obtain
    \begin{equation}\label{e:5.2}
        c_1'\N(\alpha)^{1/d}\leq\sigma_i(\beta)\leq c_2'\N(\alpha)^{1/d}
    \end{equation}
    for all $1\leq i\leq d$, as desired.
\end{proof}

Next, we prove that the codifferent of a given totally real number field has a $\Z$-basis consisting only of totally positive elements. 

\begin{lemma}\label{lem:4.2}
    Let $K$ be a totally real number field of degree $d$, and $\co_K^\vee$ be its codifferent. Then, there exists a $\Z$-basis $\{\omega_1',\omega_2',\dots,\omega_d'\}$ of $\co_K^\vee$ such that $\omega_i'\succ0$ for all $1\leq i\leq d$.
\end{lemma}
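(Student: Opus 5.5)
Take an arbitrary $\Z$-basis $\{\gamma_1,\dots,\gamma_d\}$ of $\co_K^\vee$ and modify it by a unimodular change of basis so that every new basis vector becomes totally positive. The natural device is to add a large multiple of a fixed totally positive element of $\co_K^\vee$ to the first $d-1$ vectors, and then repair the last one.

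First, I will fix a totally positive element $\theta\in\co_K^\vee$ that is \emph{primitive} in the lattice $\co_K^\vee$, meaning it can be extended to a $\Z$-basis $\{\theta,\gamma_2,\dots,\gamma_d\}$ of $\co_K^\vee$. To find it, take any $\gamma\in(\co_K^\vee)^+$; for instance, $N\cdot 1$ for a large integer $N$ works, since $\co_K\subseteq\co_K^\vee$. The totally positive elements of the lattice form an open cone intersected with the lattice, and this cone contains primitive vectors: pick $\gamma$ in the open cone, then perturb it slightly to a vector whose coordinates in a fixed basis are coprime. Concretely, in coordinates with respect to some basis, the vector $(kN_1,\dots)$ plus a small correction $(1,0,\dots,0)$ has gcd $1$ and, after scaling by large $k$, still lies in the open cone. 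Extending a primitive vector to a basis is standard, since $\Z^d/\Z\theta$ is torsion-free.

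Second, with the basis $\{\theta,\gamma_2,\dots,\gamma_d\}$ in hand, I will set $\omega_1'=\theta$ and $\omega_i'=\gamma_i+k\theta$ for $2\le i\le d$, with $k\in\Z$ large. The transition matrix is unipotent, hence in $GL_d(\Z)$, so the new set is still a $\Z$-basis. For each embedding $\sigma_j$ we have $\sigma_j(\gamma_i+k\theta)=\sigma_j(\gamma_i)+k\sigma_j(\theta)$, and $\sigma_j(\theta)>0$. Hence, choosing
\[
k>\max_{i,j}\frac{|\sigma_j(\gamma_i)|}{\sigma_j(\theta)},
\]
all $\omega_i'$ are totally positive.

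The only step requiring care is the existence of a totally positive primitive vector in the first step. I will handle it by working in coordinates: let $v\in\Z^d$ be the coordinate vector of some totally positive $\gamma$. The vector $Mv+e_1$ with $M$ a multiple of $v_1$ chosen so that the gcd is $1$ needs some care, so instead I will take $w=Mv+u$, where $u$ is any primitive vector and $M$ is a large prime not dividing some fixed coordinate of $u$. For example, take $u=e_j$ with the $j$-th coordinate of $v$ arbitrary: then $\gcd(Mv+e_j)$ divides $\gcd(Mv_i, Mv_j+1)$, which is coprime to $M$ and divides $v_i$ for all $i\ne j$. A cleaner choice avoids even this: $w=Mv+e_j$ where $v_i\neq0$ for some $i\neq j$ (possible when $d\ge2$), and then choose $M$ so that $Mv_j+1$ is coprime to $v_i$. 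Such an $M$ exists by Dirichlet, or simply by taking $M$ to be a multiple of $v_i$ when $v_j$ is arbitrary, since then $Mv_j+1\equiv1\pmod{v_i}$. Scaling keeps $w/M$ close to $v$, so $w$ remains in the open totally positive cone for $M$ large. The case $d=1$ is trivial, since $\co_K^\vee=\Z$ has basis $\{1\}$.
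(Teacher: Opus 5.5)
Your proof is correct and has the same overall structure as the paper's. You find a totally positive primitive vector $\theta$ of $\co_K^\vee$, extend it to a basis, and shear the remaining basis vectors by a large multiple of $\theta$. The paper uses separate multiples $m_j$ where you use a single $k$, which is immaterial.

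The difference lies only in how $\theta$ is produced.
\begin{itemize}
\item The paper takes $\theta=1/n$, where $\T(\co_K)=n\Z$, and proves primitivity by computing $\Q\cap\co_K^\vee=\frac1n\Z$. This uses a little arithmetic of the codifferent but yields a canonical first basis vector.
\item You use a general lattice argument: you perturb $Mv$ to $w=Mv+e_j$, with $v_i\neq 0$, $i\neq j$, and $v_i\mid M$. This is valid. The gcd of the coordinates of $w$ divides $Mv_j+1$, so it is prime to $M$. Since it also divides $Mv_i$, it divides $v_i$. But $Mv_j+1\equiv 1\pmod{v_i}$, so the gcd is $1$. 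The argument does not use the codifferent at all.
\end{itemize}

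Your perturbation is more work than necessary, however. The totally positive cone is stable under positive scaling. So if $g$ is the gcd of the coordinates of any $\gamma\in(\co_K^\vee)^+$, then $\gamma/g$ is already a primitive, totally positive element of $\co_K^\vee$. No perturbation or coprimality bookkeeping is needed. For $\gamma=1$ this is exactly the paper's $1/n$, since $1=n\cdot\frac1n$ with $\frac1n$ primitive.

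One cleanup is also needed: delete your first ``concretely'' attempt, because adding $e_1$ to $kv$ need not give a primitive vector. For example, if $v=v_1e_1$, then $kv+e_1=(kv_1+1)e_1$ has gcd $kv_1+1>1$.
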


\begin{proof}
    Recall that the codifferent 
    \[
    \co^\vee_K=\{x\in K\mid \T(x\co_K)\subseteq \Z\}
    \]
    is a free $\Z$-module of rank $d$. The set $\T(\co_K)$ is a nonzero ideal in $\Z$; we write $\T(\co_K)=n\Z$ for some $n\in\Z_{>0}$. If $x\in \Q\cap\co^\vee_K$, then $\T(x\co_K)=x\T(\co_K)=xn\Z$, so $x\in \frac{1}{n}\Z$. Conversely, $\frac{1}{n}\in\co^\vee_K$ because $\frac{1}{n}\T(\co_K)=\Z$. Hence, $\Q\cap\co^\vee_K=\frac{1}{n}\Z$.

    The element $\frac{1}{n}$ is a totally positive and primitive element in the $\Z$-module $\co_K^\vee$; indeed, if $1/n=ky$ with $k\geq 2$ and $y\in\co_K^\vee$, then $y=\frac{1}{kn}\in \Q\cap\co^\vee_K=\frac{1}{n}\Z$, which forces $k=1$.  Note that any primitive element can be extended to a basis. Thus, there exist $\gamma_2,\dots,\gamma_d\in\co^\vee_K$ such that 
    \[
    \co^\vee_K=\Z\cdot\frac{1}{n}\oplus\Z\cdot \gamma_2\oplus \cdots\oplus \Z\cdot\gamma_d.
    \]
    Choose positive integers $m_2,m_3,\dots,m_d$ large enough that for every embedding $\sigma_i$, we have
    \[
    \sigma_i(\gamma_j)+\frac{m_j}{n}>0,
    \]
    for all $2\leq j\leq d$. Define 
    \[
    \omega_1'=\frac{1}{n}\quad \omega_j'=\gamma_j+m_j\omega_1'\quad \forall\, 2\leq j\leq d.
    \]

    Since 
    \[
    (\omega_1',\dots,\omega_d')=(1/n, \gamma_2,\dots,\gamma_d) \begin{pmatrix}
    1 & m_2 & m_3 & \cdots & m_d \\
    0 & 1 & 0 & \cdots & 0 \\
    0 & 0 & 1 & \cdots & 0 \\
    \vdots & \vdots & \vdots & \ddots & \vdots \\
    0 & 0 & 0 & \cdots & 1
    \end{pmatrix};
    \]
    the set $\{\omega_1',\omega_2',\dots,\omega_d'\}$ is a $\Z$-basis for $\co^\vee_K$, and by construction, every $\omega_i'$ is totally positive, as required.
\end{proof}

We now prove the main result of this section. 

\begin{proposition}\label{unitadjust}
     Let $m\geq 2$ be an even integer and $K$ be a totally real number field of degree $d$. Consider a $\Z$-basis of totally positive elements for $\co_K^\vee$  from Lemma \ref{lem:4.2}, and let $\{\omega_1,\omega_2,\dots,\omega_d\}$ be its dual basis with respect to the trace pairing. Then, the following statements hold:
     \begin{enumerate}
         \item The dual basis $\{\omega_1,\omega_2,\dots,\omega_d\}$ is an integral basis for $\co_K$.
         \item There exist constants $c_1,c_2>0$ that depend only on $m,\, K$, and $\omega_1,\dots,\omega_d$, such that for every $\alpha\in\co_K^+$, there exists an $\varepsilon\in\co_K^\times$ with the property that $\beta:=\varepsilon^m\alpha=\sum_{i=1}^d \beta_i\omega_i$, $\beta_i\in\Z$ satisfies 
        \[
        c_1\N(\alpha)^{1/d}\leq \beta_i\leq c_2\N(\alpha)^{1/d}
        \]
        for all $1\leq i\leq d$. 
     \end{enumerate}
\end{proposition}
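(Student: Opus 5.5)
Part (1) is the standard duality statement recalled in the preliminaries: the dual of $\co_K^\vee$ under the trace pairing is $\co_K$. Concretely, every $x\in\co_K$ can be written as $x=\sum_i \T(\omega_i'x)\omega_i$. To see this, note that both sides have the same trace pairing against every $\omega_j'$, and the pairing is non-degenerate. The coefficients $\T(\omega_i'x)$ lie in $\Z$ because $\omega_i'\in\co_K^\vee$. Conversely, each $\omega_i$ lies in the dual of $\co_K^\vee$, which is $\co_K$, since $\T(\omega_i\omega_j')=\delta_{ij}\in\Z$ and the $\omega_j'$ form a $\Z$-basis of $\co_K^\vee$. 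Linear independence follows from the duality relations themselves. So $\{\omega_1,\dots,\omega_d\}$ is an integral basis.

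For (2), the key observation is the coordinate formula just used: for $\beta=\sum_i\beta_i\omega_i\in\co_K$, pairing with $\omega_i'$ gives
\[
\beta_i=\T(\omega_i'\beta)=\sum_{k=1}^d \sigma_k(\omega_i')\sigma_k(\beta).
\]
Given $\alpha\in\co_K^+$, I apply Lemma \ref{lem:4.1} to obtain $\varepsilon\in\co_K^\times$ such that $\beta=\alpha\varepsilon^m$ satisfies
\[
c_1'\N(\alpha)^{1/d}\le\sigma_k(\beta)\le c_2'\N(\alpha)^{1/d}\quad\text{for all }k.
\]
By Lemma \ref{lem:4.2}, each $\omega_i'$ is totally positive, so every weight $\sigma_k(\omega_i')$ is strictly positive. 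Hence each $\beta_i$ is a positive linear combination of the quantities $\sigma_k(\beta)$, and therefore
\[
c_1'\,\T(\omega_i')\,\N(\alpha)^{1/d}\le\beta_i\le c_2'\,\T(\omega_i')\,\N(\alpha)^{1/d}.
\]
Taking $c_1=c_1'\min_i\T(\omega_i')$ and $c_2=c_2'\max_i\T(\omega_i')$ gives the claim. Both constants are positive because $\T(\omega_i')>0$, and they depend only on $m$, $K$ and the chosen basis.

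There is essentially no obstacle here. The only point needing care is the identification of the coordinate $\beta_i$ as the trace of $\omega_i'\beta$ in the dual basis, because this is exactly where total positivity of the codifferent basis enters. Without it, cancellation between embeddings could make some coordinate small even when all $\sigma_k(\beta)$ are large, which is the failure for general integral bases that the paper alludes to.
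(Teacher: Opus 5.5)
Your proof is correct and follows the paper's argument essentially step for step: part (1) comes from trace duality between $\co_K^\vee$ and $\co_K$, and part (2) comes from writing $\beta_i=\T(\omega_i'\beta)$ as a positive combination of the $\sigma_k(\beta)$, applying Lemma \ref{lem:4.1}, and taking $c_1=c_1'\min_i\T(\omega_i')$ and $c_2=c_2'\max_i\T(\omega_i')$. Your argument for (1) is slightly more explicit than the paper's, which simply cites $(\co_K^\vee)^\ast=\co_K$.
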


\begin{proof}
     Consider a $\Z$-basis of totally positive elements $\{\omega_1',\omega_2',\dots,\omega_d'\}$ for $\co_K^\vee$, and let $\{\omega_1,\omega_2,\dots,\omega_d\}$ be its dual basis with respect to the bilinear trace-pairing $\langle x,y\rangle=\T(xy)$ given by 
    \[
    \T(\omega_i'\omega_j)=\delta_{ij}=\begin{cases}
        1, \text{ if } i=j\\
        0, \text{ if } i\neq j.
    \end{cases}
    \]
     Then, it follows that ${\omega_1,\omega_2,\dots,\omega_d}$ is a $\mathbb{Z}$-basis of the dual module $(\co_K^\vee)^\ast=\co_K$, i.e., ${\omega_1,\omega_2,\dots,\omega_d}$ is an integral basis of $\co_K$, thereby proving (1).

     Now, we write $\beta=\sum_{i=1}^d \beta_i\omega_i$ with $\beta_i\in\Z$. By duality, we have 
    \begin{equation}\label{eq:4.3}
        \beta_i=\T(\beta\omega_i')=\sum_{j=1}^d\sigma_j(\beta)\sigma_j(\omega_i').
    \end{equation}
    Note that both $\beta$ and $\omega_i'$ are totally positive, so every term in the sum \eqref{eq:4.3} is positive. Since $\omega_i'\in\co^\vee_K$ and $1\in\co_K$, we have that 
    \[
    \T(\omega_i')=\sum_{j=1}^d\sigma_j(\omega_i')\in \Z,
    \]
    moreover, by totally positivity $\T(\omega_i')\geq 1$. By Lemma \ref{lem:4.1} there exist constants $c_1',\, c_2'>0$ such that
    \[
    c_1'\N(\alpha)^{1/d}\sigma_j(\omega_i')\leq \sigma_j(\beta)\sigma_j(\omega_i')\leq c_2'\N(\alpha)^{1/d}\sigma_j(\omega_i').
    \]
    Summing over $j=1,2,\dots,d$ yields
    \[
    c_1'\N(\alpha)^{1/d}\T(\omega_i')\leq \beta_i\leq c_2'\N(\alpha)^{1/d}\T(\omega_i').
    \]
    Finally, set 
    \[
    c_1=c_1'\min_{1\leq i\leq d} \T(\omega_i'),\quad c_2=c_2'\max_{1\leq i\leq d} \T(\omega_i');
    \]
    these constants are positive and depend only on $m,\,K$, and the basis $\{\omega_1,\omega_2,\dots,\omega_d\}$, we obtain
    \[
    c_1\N(\alpha)^{1/d}\leq \beta_i\leq c_2\N(\alpha)^{1/d},
    \]
    for all $1\leq i\leq d$. This proves (2).
\end{proof}

The following example illustrates that Proposition \ref{unitadjust}(2) does not hold for every integral basis.

\begin{eg}
    Let $K=\Q(\sqrt 2)$ and $m=2$. Choose the integral basis $1,\, 2+\sqrt2$, and take $\alpha=1$. Suppose there exist constants $c_1,c_2>0$ such that $\beta=\varepsilon^2=a+b(2+\sqrt{2})$, $a,b\in\Z$ satisfy $c_1\leq a,b\leq c_2$. In particular, $a,b>0$. 

    Note that the units in $\Z[\sqrt 2]$ are $\pm(1+\sqrt2)^n$, where $n\in \Z$. Therefore, $\varepsilon^2=(1+\sqrt2)^{2n}$. If $n=0$, then $\varepsilon^2=1$, so $b=0$, a contradiction. If $n>0$, write $(1+\sqrt 2)^{2n}=x+y\sqrt 2$, $x,y\in\Z_{>0}$. Then, $x^2-2y^2=1$, so for $y\geq 1$, we have 
    \[
    x^2=2y^2+1<4y^2,
    \]
    hence $x<2y$. Therefore, $x+y\sqrt 2=(x-2y)+y(2+\sqrt2)$, so $a=x-2y<0$, a contradiction. Finally, if $n<0$, say $n=-r$ with $r>0$, then $(1+\sqrt 2)^{-2r}=(3-2\sqrt 2)^r=x-y\sqrt{2}$ with $x,y>0$. Thus, 
    \[
    x-y\sqrt{2}=(x+2y)-y(2+\sqrt 2),
    \]
    so $b=-y<0$, again a contradiction.
\end{eg}

\section{Preparation for asymptotic local--global principle}

In this section, we recall Birch's theorem, which we use to establish the asymptotic local--global principle and develop the necessary technical machinery for its application in our setting.

From this point onward, we will work with the fixed integral basis $\omega_1,\omega_2,\dots,\omega_d$ of $\co_K$ obtained in Proposition \ref{unitadjust}. We begin by establishing the equivalence between the representation of an element by a totally positive definite diagonal $m$-ic form and the representation of its coordinates, with respect to this integral basis, by a system of integral $m$-ic forms.

\begin{lemma}\label{lem:5.1}
     Let $m\geq 2$ be even integer, $K$ be a totally real number field of degree $d$, and $Q$ be a totally positive definite diagonal $m$-ic form in $n$ variables over $K$. There exist $m$-ic forms $F_1,F_2,\dots,F_d$ in $nd$ variables $x=(x_{1,1}, x_{1,2},\dots,x_{n,d})$ over $\Z$ such that 
     \[
    Q(x)=Q(x_1,x_2,\dots,x_n)=\sum_{k=1}^d F_k(x)\omega_k,
     \]
     where $\omega_1,\dots,\omega_d$ is an integral basis from Proposition \ref{unitadjust}.
\end{lemma}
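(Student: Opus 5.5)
The plan is to perform the Weil restriction explicitly. Each variable $x_i\in\co_K$ is written in the fixed integral basis from Proposition \ref{unitadjust} as
\[
x_i=\sum_{j=1}^d x_{i,j}\omega_j,\qquad x_{i,j}\in\Z .
\]
We then treat the $x_{i,j}$ as $nd$ independent indeterminates. Expanding $a_ix_i^m=a_i\bigl(\sum_j x_{i,j}\omega_j\bigr)^m$ by the multinomial theorem gives a sum of monomials $\prod_j x_{i,j}^{e_j}$ with $\sum_j e_j=m$. Each monomial is multiplied by the algebraic integer
\[
\binom{m}{e_1,\dots,e_d}\,a_i\prod_j\omega_j^{e_j}\in\co_K .
\]

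Next, since $\{\omega_k\}$ is a $\Z$-basis of $\co_K$, each such coefficient $c\in\co_K$ has a unique expansion $c=\sum_k c^{(k)}\omega_k$ with $c^{(k)}\in\Z$. By the duality in Proposition \ref{unitadjust}, these are explicitly $c^{(k)}=\T(c\,\omega_k')$. Collecting terms by $\omega_k$, define
\[
F_k(x)=\sum_{i=1}^n\sum_{|e|=m}\binom{m}{e}\,\T\Bigl(a_i\omega_k'\prod_j\omega_j^{e_j}\Bigr)\prod_j x_{i,j}^{e_j}.
\]
Each $F_k$ is a homogeneous polynomial of degree $m$ in the $nd$ variables with integer coefficients, and by construction $Q(x)=\sum_k F_k(x)\omega_k$ as an identity in $\co_K[x_{i,j}]$. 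Specializing to integers then gives the identity for every $x\in\co_K^n$.

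There is no real obstacle; the only point to verify is integrality. This holds because $a_i\prod_j\omega_j^{e_j}\in\co_K$ and $\omega_k'\in\co_K^\vee$, so the traces lie in $\Z$. Homogeneity follows from $\sum_j e_j=m$. Total positivity of $Q$ plays no role here.
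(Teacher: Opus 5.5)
Your proposal is correct and follows essentially the paper's argument: substitute $x_i=\sum_j x_{i,j}\omega_j$, expand $a_ix_i^m$, and collect the coefficient of each $\omega_k$; each such coefficient is an integral form of degree $m$. The only cosmetic difference is how you extract coordinates: you use the explicit trace formula $c^{(k)}=\T(c\,\omega_k')$ from the dual basis, whereas the paper multiplies out using the integral structure constants $\omega_i\omega_j=\sum_k c_{i,j}^k\omega_k$.
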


\begin{proof}
    Consider the totally positive definite diagonal $m$-ic form over $K$
    \[
    Q(x)=Q(x_1,\dots,x_n)=\sum_{i=1}^{n} a_i x_i^{m},
    \]
     i.e., $a_1,\dots,a_n\in \mathcal{O}_K^+$. We write 
    \[
    x_i=\sum_{j=1}^d x_{i,j} \omega_j, \quad a_i=\sum_{t=1}^d a_{i,t} \omega_t,
    \]
    where $x_{i,j},\, \alpha_k,\,a_{i,t}\in \Z$ for all $i,j,\text{ and }t$. Then,  $Q(x)$ becomes 
    \[
    Q(x)=\sum_{i=1}^n a_i \left(\sum_{j=1}^d x_{i,j}\omega_j \right)^m.
    \]
    We multiply everything out using the identity 
    \[
    \omega_i\omega_j=\sum_{k=1}^d c_{i,j}^{k} \omega_k, \quad c_{i,j}^{k}\in\Z.
    \]
    For each $i$, the power $(\sum_j x_{i,j}\omega_j)^m$ is a sum of monomials of degree $m$ in variables $x_{i,1},\dots,x_{i,d}$ multiplied by products of $m$ basis elements. Thus, we have 
    \[
    \left(\sum_{j=1}^d x_{i,j}\omega_j\right)^m=\sum_{k=1}^d P_{i}^k(x_{i,1},\dots,x_{i,d})\omega_k,
    \]
    where each $P_i^k$ is an $m$-ic form with integer coefficients. Next, multiply by $a_i=\sum_t a_{i,t}\omega_t$ to obtain 
    \[
    a_ix_i^m=\left(\sum_{t=1}^d a_{i,t}\omega_t \right)\left(\sum_{r=1}^d P_i^r\omega_r \right)=\sum_{k=1}^d\left(\sum_{t,r}a_{i,t}c_{t,r}^k P_i^r(x_{i,1},\dots,x_{i,d}) \right)\omega_k.
    \]
    
    Define
    \[
    F_i^k(x_{i,1},\dots,x_{i,d}):=\sum_{t,r} a_{i,t}c_{t,r}^k P_i^r(x_{i,1},\dots,x_{i,d}).
    \]
    Note that each $F_i^k$ is an $m$-ic form with integer coefficients. Now, summing over $i$, the coefficient of $\omega_k$ in $Q(x)$ is 
    \begin{equation}\label{fk}
    F_k({x}):=\sum_{i=1}^n F_i^k(x_{i,1},\dots,x_{i,d}), \quad \forall\, 1\leq k\leq d,
    \end{equation}
    where $x=(x_{1,1},\dots, x_{1,d},x_{2,1}, \dots, x_{n,d})\in\Z^{nd}$. Each $F_k$ is an $m$-ic form in $nd$ variables with coefficients in $\Z$. Therefore, we get 
    \[
    Q(x)=Q(x_1,x_2,\dots,x_n)=\sum_{k=1}^d F_k(x)\omega_k,
    \]
    as we wanted.
\end{proof}

\begin{lemma}\label{lem:equivalentsystem}
    Let $m\geq 2$ be even integer, $K$ be a totally real number field of degree $d$, $\alpha\in\co_K^+$, $Q$ be a totally positive definite diagonal $m$-ic form in $n$ variables over $K$, and $F_1,\dots,F_d$ are the $m$-ic forms from Lemma \ref{lem:5.1}. Write $\alpha=\sum_{k=1}^d \alpha_k\omega_k$ with $\alpha_k\in\Z$, where $\omega_1,\dots,\omega_d$ is an integral basis from Proposition \ref{unitadjust}. Then, the following statements are equivalent:
    \begin{enumerate}
        \item $\alpha$ is represented by $Q$;
        \item there exists $x\in \Z^{nd}$ such that $F_k(x)=\alpha_k$ for all $1\leq k\leq d$.
    \end{enumerate}
\end{lemma}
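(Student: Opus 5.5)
The equivalence comes from combining the identity in Lemma \ref{lem:5.1} with two facts: coordinates with respect to an integral basis are unique, and the map $\Z^{nd}\to\co_K^n$ given by $x\mapsto(x_1,\dots,x_n)$ with $x_i=\sum_{j=1}^d x_{i,j}\omega_j$ is a bijection. Since $\{\omega_1,\dots,\omega_d\}$ is an integral basis (Proposition \ref{unitadjust}(1)), each $x_i\in\co_K$ has a unique coordinate vector $(x_{i,1},\dots,x_{i,d})\in\Z^d$. Concatenating these vectors gives the bijection $\co_K^n\leftrightarrow\Z^{nd}$, and it uses exactly the variable ordering fixed in Lemma \ref{lem:5.1}.

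For (1)$\Rightarrow$(2), suppose $Q(x_1,\dots,x_n)=\alpha$ with $x_i\in\co_K$. Expand each $x_i$ in the basis to obtain $x\in\Z^{nd}$. Lemma \ref{lem:5.1} then gives
\[
\sum_{k=1}^d\alpha_k\omega_k=\alpha=Q(x_1,\dots,x_n)=\sum_{k=1}^d F_k(x)\omega_k .
\]
The $F_k(x)$ are integers because each $F_k$ has integer coefficients. Since the $\omega_k$ are $\Z$-linearly independent (indeed $\Q$-linearly independent, being a basis of $K$ over $\Q$), we may compare coefficients and conclude $F_k(x)=\alpha_k$ for every $k$.

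For (2)$\Rightarrow$(1), take $x\in\Z^{nd}$ with $F_k(x)=\alpha_k$ for all $k$ and put $x_i=\sum_j x_{i,j}\omega_j\in\co_K$. By Lemma \ref{lem:5.1},
\[
Q(x_1,\dots,x_n)=\sum_{k=1}^d F_k(x)\omega_k=\sum_{k=1}^d\alpha_k\omega_k=\alpha,
\]
so $\alpha$ is represented by $Q$.

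There is no serious obstacle. The one point to check is that the identity in Lemma \ref{lem:5.1} holds as a polynomial identity under the substitution $x_i=\sum_j x_{i,j}\omega_j$, so that it applies to every integer vector $x$. This follows from how the $F_k$ were built: each was obtained by formally expanding $a_i x_i^m$ using the structure constants $c_{i,j}^k$. Total positivity of $Q$ and of $\alpha$ plays no role in this argument.
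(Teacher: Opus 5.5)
Your proof is correct and follows the same route as the paper: you combine the identity $Q(x_1,\dots,x_n)=\sum_{k=1}^d F_k(x)\omega_k$ from Lemma \ref{lem:5.1} with the uniqueness of coordinates with respect to the integral basis $\omega_1,\dots,\omega_d$. The paper compresses this into one line, whereas you spell out the bijection $\co_K^n\leftrightarrow\Z^{nd}$ and the coefficient comparison explicitly.
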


\begin{proof}
    Write $\alpha=\sum_{k=1}^d \alpha_k \omega_k$. By Lemma \ref{lem:5.1}, 
    \[
    Q(x)=\sum_{k=1}^d F_k(x)\omega_k.
    \]
    Since $\omega_1,\dots,\omega_d$ is an integral basis for $\co_K$, the equivalence of the statements follows immediately.
\end{proof}

 For a fixed $\mu=(\mu_1,\mu_2,\dots,\mu_d)\in\C^d$, let
\begin{equation}\label{eq:VF}
V_F(\mu)=\left\{ x\in \C^{nd} :
F_k(x)=\mu_k \,\forall\, 1\leq k\leq d \right\}
\end{equation}
be the (affine) variety defined by $F=(F_1,\ldots,F_d)$, where $F_k$ are as in Lemma \ref{lem:5.1}. We write $V_F(\alpha)$ to denote the variety corresponding to $\mu=(\alpha_1,\dots,\alpha_d)$, where $\alpha_k$ are the coordinates of $\alpha$ when expressed in terms of the integral basis. The \emph{singular locus} of $V_F(\alpha)$ is defined by 
\[
V_F^*(\mu)=\left\{x\in \mathbb C^{nd} : \operatorname{rank} J_F(x)<d
\right\},
\]
where
\[
J_F(x)=\left(\frac{\partial F_k}{\partial x_{i,j}}(x)\right)_{\substack{1\le k,j\le d\\1\le i\le n}}
\]
is the $d\times nd$ Jacobian matrix of the variety $V_F(\mu)$. Let 
\[
V^\ast=\bigcup_{\mu\in\C^d}V_F^\ast(\mu)
\]
be the union of the loci of singularities of $V_F(\mu)$. 

We now restate Birch's theorem in the form that will be used throughout this work. For the reader's convenience, we also include a brief proof, which primarily consists of assembling several results from Birch's original paper and making explicit the facts that will be used later.

\medskip 

Let $F=(F_1,F_2,\dots,F_d)\in\Z[x_1,x_2,\dots,x_{nd}]^d$ be a tuple of $m$-ic forms in $nd$ variables. For $\nu\in\Z^d$, $q\in\Z_{\geq 1}$, and $a=(a_1,\dots,a_d)\in\Z^d$ with $\gcd(a_1,\dots,a_d,q)=1$, define
\[
S_{a,q}(\nu)= e\left(-\frac{a\cdot \nu}{q}\right)\sum_{x \mod q} e\left(\frac{a\cdot F(x)}{q} \right)\\=\sum_{x\mod q} e\left(\frac{1}{q}\sum_{k=1}^da_k(F_k(x)-\nu_k)\right),
\]
where $e(z)=\exp(2\pi iz)$. Furthermore, for a prime $p$, define 
\[
\mathfrak{S}_p(\nu)=\sum_{r=0}^\infty p^{-rnd}\sum_{\substack{{a \mod p^r}\\{\gcd(a_1,\dots, a_d,p)=1}}}S_{a,p^r}(\nu),
\]
and 
\[
\mathfrak{S}(\nu)=\prod_{p}\mathfrak{S}_p(\nu),
\]
where the product runs over all primes.

\begin{theorem}[{\cite[Theorem $1$]{bir}}]\label{birchthm}
    Let $F=(F_1,F_2,\dots,F_d)\in\Z[x_1,x_2,\dots,x_{nd}]^d$ be a tuple of $m$-ic forms in $nd$ variables, and $\nu=(\nu_1,\nu_2,\dots,\nu_d)\in \Z^d$. Define the variety 
    \[
    V_F(\nu)=\{x\in\C^{nd}\mid F_k(x)=\nu_k \, \forall\, 1\leq k\leq d\},
    \]
    and let $V^\ast$ be the union of loci of singularities the variety. Define $K_0$ by $nd-\dim(V^\ast)=2^{m-1}K_0$ and suppose $K_0>d(d+1)(m-1)$. Let $\mathcal{B}\subset [-1,1]^{nd}$ be a closed box of side length at most $1$, and for $P\geq 2$, $M(P;\nu)$ be the number of integer points on $V(\nu)$ in the box $P\mathcal{B}$. Then, there exists $\delta>0$ such that, writing $\nu=P^m\mu$, 
    \[
    M(P;\nu)=P^{d(n-m)}\cdot \mathfrak{S}(\nu)\cdot \Psi(\mu)+O(P^{d(n-m)-\delta}),
    \]
    where $\mathfrak{S}(\nu)$ is the \emph{singular series} defined above, $\Psi$ is the \emph{singular integral} defined in \cite[Section $6$]{bir}, and the implied constant depends only on $F,\mathcal{B}$, and $\delta$; in particular it is independent of $P$ and $\nu$. Moreover, we have the following:

    \begin{enumerate}
        \item For every compact set $\mathcal C\subset \operatorname{int} \mathcal{B}\setminus V^\ast$, there exists a constant $c(\mathcal{C})>0$ such that $\Psi(F(x))\geq c$ for all $x\in\mathcal C$.

        \item There exists a prime $p_0$ such that 
        \[
        \prod_{p>p_0}\mathfrak{S}_p(\nu)\geq \frac{1}{2},
        \]
        uniformly for all $\nu\in\Z^d$.

        \item For every prime $p$, every integer $N\geq 0$, and every $\nu\in\Z^d$, we have 
        \[
        \sum_{r=0}^N p^{-rnd}\sum_{\substack{{a \mod p^r}\\{\gcd(a_1,\dots, a_d,p)=1}}}S_{a,p^r}(\nu)=p^{-Nd(n-1)} \mathcal{A}_p(N;\nu),
        \]
        where 
        \[
        \mathcal{A}_p(N;\nu)=\#\{x\in (\Z/p^N\Z)^{nd}\mid F_k(x)\equiv \nu_k\pmod {p^N} \, \forall 1\leq k\leq d\}.
        \]
        In particular, 
        \[
        \mathfrak{S}_p(\nu)=\lim_{N\rightarrow\infty}p^{-Nd(n-1)} \mathcal{A}_p(N;\nu)
        \]
    \end{enumerate}
\end{theorem}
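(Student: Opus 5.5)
The statement is Birch's theorem, so the plan is to take it from Birch's paper. The work is to extract the three supplementary facts (1)--(3) from his sections and match his normalisations to ours. The main asymptotic is \cite[Theorem 1]{bir} verbatim, under his hypothesis $K_0>d(d+1)(m-1)$; here $K_0$ is defined through $nd-\dim V^\ast=2^{m-1}K_0$ and $V^\ast$ is the Birch singular locus. For this step I would only check two things. First, the box $\mathcal{B}$ and the scaling $\nu=P^m\mu$ agree with Birch's conventions. Second, the error term is uniform in $\nu$. Uniformity holds because Birch's minor-arc estimates never involve $\nu$, and on the major arcs $\nu$ enters only through the phase $e(-a\cdot\nu/q)$, which has absolute value $1$.

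For (3), I would use the standard orthogonality computation. Expand
\[
p^{-Nnd}\sum_{b\bmod p^N}\sum_{x\bmod p^N}e\!\left(\frac{b\cdot(F(x)-\nu)}{p^N}\right)=p^{-Nnd}\,p^{Nd}\mathcal{A}_p(N;\nu).
\]
Group the $b$ according to the exact reduced denominator $p^r$ with $0\le r\le N$, writing $b/p^N=a/p^r$ with $\gcd(a,p)=1$. Each inner sum over $x\bmod p^N$ is then $p^{(N-r)nd}S_{a,p^r}(\nu)$. This gives exactly the displayed identity. The limit formula then follows from the convergence of $\mathfrak{S}_p$, which Birch proves via the bound $S_{a,p^r}\ll p^{r(nd-K_0/(d(m-1))+\epsilon)}$ \cite[Lemma 5.2 / Section 7]{bir}. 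Under our hypothesis on $K_0$ this makes $\sum_r\sum_a p^{-rnd}|S_{a,p^r}|$ converge geometrically.

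For (2), I would use the same exponential sum bound. It gives $|\mathfrak{S}_p(\nu)-1|\le\sum_{r\ge1}p^{rd}\,p^{-r(K_0/(d(m-1))-\epsilon)}\ll p^{-1-\eta}$ for some $\eta>0$, uniformly in $\nu$. This is because $K_0/(d(m-1))>d+1$. Hence $\prod_{p>p_0}\mathfrak{S}_p(\nu)$ converges absolutely, and for $p_0$ large it is at least $1/2$. This is Birch's Lemma 7.1 and its corollary.

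For (1), I would recall Birch's Section 6 description of $\Psi(\mu)$ as the limit of normalised real volumes of $\{x\in\mathcal{B}:|F(x)-\mu|<\epsilon\}$. At a point $x_0\in\operatorname{int}\mathcal{B}\setminus V^\ast$ the Jacobian has rank $d$. The implicit function theorem then gives a positive lower bound for the density of the fibre through $F(x_0)$, and this bound is continuous in $x_0$. Compactness of $\mathcal{C}$ then turns these local bounds into a single constant $c(\mathcal{C})$.

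I expect the main obstacle to be bookkeeping: Birch's normalisation of $\Psi$ and the exponent $P^{nd-md}$ have to be aligned with ours, and the uniformity claims in $\nu$ have to be verified. The mathematics itself is all in \cite{bir}.
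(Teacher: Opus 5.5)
Your proposal matches the paper's proof. The main asymptotic is taken directly from \cite[Theorem 1]{bir}. Part (3) is the same orthogonality computation, grouping $b \bmod p^N$ by exact denominator $p^r$. Part (2) is derived exactly as in the paper: you sum the bound $|S_{a,p^r}(\nu)|\ll_\varepsilon p^{r(nd-K_0/(d(m-1))+\varepsilon)}$ (which the paper cites as \cite[Lemma 5.4]{bir}) over the at most $p^{rd}$ primitive $a$, and then use $K_0/(d(m-1))>d+1$.

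The only difference is in (1). The paper simply reads the lower bound off Birch's theorem. You instead sketch it from the fibre-density description of $\Psi$, the implicit function theorem at points outside $V^\ast$, and compactness of $\mathcal{C}$, which is a correct and slightly more self-contained justification.
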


\begin{proof}
    The asymptotic formula and (1) is as written in \cite[Theorem $1$]{bir}. For (2); recall by \cite[lemma $5.4$]{bir}, for all $\varepsilon>0$, for all $q\geq 1$, and for all $a\in\Z^d$ with $\gcd(a_1,\dots,a_d,p)=1$, we have 
    \[
    |S_{a,q}(\nu)|=\Bigg| \sum_{x \mod q} e\left(\frac{a\cdot F(x)}{q} \right) \Bigg|\ll_\varepsilon q^{nd-\frac{K_0}{d(m-1)}+\varepsilon}.
    \]

    Write 
    \[
    \mathfrak{S}_p(\nu)=1+\sum_{r=1}^\infty p^{-rnd}\sum_{\substack{{a \mod p^r}\\{\gcd(a_1,\dots, a_d,p)=1}}}S_{a,p^r}(\nu).
    \]

    For a fixed $r\geq 1$, we have 
   \[
    \begin{aligned}
    \Bigg|\sum_{\substack{a \bmod p^r\\ \gcd(a_1,\dots,a_d,p)=1}} S_{a,p^r}(\nu)\Bigg|
    &\leq \#\{a\in(\mathbb Z/p^r\mathbb Z)^d \mid \gcd(a_1,\dots,a_d,p)=1\}\cdot \max_{\substack{a \bmod p^r\\ \gcd(a_1,\dots,a_d,p)=1}}
    |S_{a,p^r}(\nu)| \\
    &\ll_\varepsilon p^{rd}\,
    p^{r\left(nd-\frac{K_0}{d(m-1)}+\varepsilon\right)} = p^{r\left(nd+d-\frac{K_0}{d(m-1)}+\varepsilon\right)}.
    \end{aligned}
    \]

    Therefore, we obtained 
    \[
    p^{-rnd}\Bigg|\sum_{\substack{a \bmod p^r\\ \gcd(a_1,\dots,a_d,p)=1}} S_{a,p^r}(\nu)\Bigg|\ll_\varepsilon p^{r\left(d-\frac{K_0}{d(m-1)}+\varepsilon\right)}.
    \]
    By assumption $K_0>d(d+1)(m-1)$, it follows that $K_0/d(m-1)>d+1$. Thus, there exists a $\theta>0$ such that 
    \[
    d-\frac{K_0}{d(m-1)}<-(1+2\theta). 
    \]
    Now, choose $\theta=\varepsilon$, we have
    \[
    p^{-rnd}\Bigg|\sum_{\substack{a \bmod p^r\\ \gcd(a_1,\dots,a_d,p)=1}} S_{a,p^r}(\nu)\Bigg|\ll_\varepsilon p^{-r(1+\varepsilon)},
    \]
    as $-(1+2\theta)+\varepsilon=-(1+\varepsilon)$.
    Therefore, the tail of $\mathfrak{S}_p(\nu)$
    \begin{align*}
    |\mathfrak{S}_p(\nu)-1|=\Bigg| \sum_{r=1}^\infty p^{-rnd}\sum_{\substack{{a \mod p^r}\\{\gcd(a_1,\dots, a_d,p)=1}}}S_{a,p^r}(\nu)\Bigg|\ll_\varepsilon p^{-(1+\varepsilon)},
    \end{align*}
    uniformly for all $\nu$. Then, there exists a $p_0$ such that 
    \[
    \prod_{p>p_0} \mathfrak{S}_p(\nu)\geq \frac{1}{2}.
    \]
    Note that here we used the fact that $\mathfrak{S}_p(\nu)\geq 0$, which follows from (3). Let us now prove (3). For all $y\in (\Z/p^N\Z)^d$, by the orthogonality of additive characters, it follows that 
    \[
    p^{-Nd}\sum_{a\in(\Z/p^N\Z)^d} e\left(\frac{a\cdot y}{p^N} \right)=\begin{cases}
        1, \quad y\equiv 0\pmod {p^N}\\
        0 \quad \text{otherwise}.
    \end{cases}
    \]
    Applying this with $y=F(x)-\nu=(F_1(x)-\nu_1,\dots,F_d(x)-\nu_d)$, and summing over $x\in(\Z/p^N\Z)^{nd}$, we obtain 
    \[
    \mathcal{A}_p(N;\nu)=\sum_{x \mod p^N}p^{-Nd}\sum_{a\mod p^N} e\left( \frac{a\cdot (F(x)-\nu)}{p^N}\right).
    \]
    Interchanging the sums and separating the exponential yields 
    \begin{equation}\label{e:5.3}
        \mathcal{A}_p(N;\nu)= p^{-Nd}\sum_{a\mod p^N} e\left(- \frac{a\cdot \nu}{p^N}\right)\sum_{x\mod p^N} e\left( \frac{a\cdot F(x)}{p^N}\right).
    \end{equation}

    Note that every $a\in(\Z/p^N\Z)^d$ can be uniquely written as $a=p^{N-r}b$, where $0\leq r\leq N$ and $b\in (\Z/p^r\Z)^d$ with $\gcd(b_1\dots,b_d,p)=1$. Thus, we have 
    \[
    e\left( \frac{a\cdot F(x)}{p^N}\right)=e\left( \frac{b\cdot F(x)}{p^r}\right).
    \]
    Write $x=y+p^rz$ with $y\in (\Z/p^r\Z)^{nd}$ and $z\in (\Z/p^{N-r}\Z)^{nd}$. Since $F(x)$ is integral, by Taylor expansion, we obtain 
    \[
    F(x)\equiv F(y+p^rz)\equiv F(y) \pmod{p^r},
    \]
    so the character $e\left( \frac{b\cdot F(x)}{p^r}\right)$ depends only on the residue class modulo $p^r$. Summing over the $p^{(N-r)nd}$ choices for $z$, we get 
    \begin{equation}\label{e:5.4}
        \sum_{x \mod p^N}e\left( \frac{a\cdot F(x)}{p^N}\right)=p^{(N-r)nd} \sum_{y \mod p^r} e\left( \frac{b\cdot F(y)}{p^r}\right).
    \end{equation}

    By the same argument applied to $e\left( -\frac{a\cdot \nu}{p^N}\right)$, we obtain 
    \begin{equation}\label{e:5.5}
        e\left( -\frac{a\cdot \nu}{p^N}\right)=e\left( -\frac{b\cdot \nu}{p^r}\right)
    \end{equation}
    Substituting the decomposition of $a$, and using \eqref{e:5.4},\eqref{e:5.5}, we obtained
    \[
    \begin{aligned}
    \mathcal{A}_p(N;\nu)
    &= p^{-Nd}
       \sum_{r=0}^N
       \sum_{\substack{b \bmod p^r\\ \gcd(b_1,\dots,b_d,p)=1}}
       p^{(N-r)nd}
       e\left(-\frac{b\cdot\nu}{p^r}\right)
       \sum_{y \bmod p^r}
       e\left(\frac{b\cdot F(y)}{p^r}\right) \\
    &= p^{-Nd}
       \sum_{r=0}^N
       p^{(N-r)nd}
       \sum_{\substack{b \bmod p^r\\ \gcd(b_1,\dots,b_d,p)=1}}
       S_{b,p^r}(\nu) \\
    &= p^{Nd(n-1)}
       \sum_{r=0}^N
       p^{-rnd}
       \sum_{\substack{b \bmod p^r\\ \gcd(b_1,\dots,b_d,p)=1}}
       S_{b,p^r}(\nu).
    \end{aligned}
    \]
    Consequently, 
    \[
    \mathfrak{S}_p(\nu)=\lim_{N\rightarrow \infty} \sum_{r=0}^N p^{-rnd}\sum_{\substack{{b \mod p^r}\\{\gcd(b_1,\dots, b_d,p)=1}}}S_{b,p^r}(\nu)=\lim_{N\rightarrow\infty}p^{-Nd(n-1)} \mathcal{A}_p(N;\nu).
    \]
    This completes the proof of (3).
\end{proof}

 We now compute the dimension of the union of loci of singularities of our variety \eqref{eq:VF}, which is necessary for the application of Birch's theorem.

\begin{proposition}\label{singdim}
    Let $m\geq 2$ be an even integer, $K$ be a totally real number field, and $Q$ be a totally positive definite diagonal $m$-ic form in $n$ variables over $K$. For $\mu\in\C^d$, let $V_F(\mu)$ be the variety defined in \eqref{eq:VF}, and $V^\ast$ be the union of loci of singularities of $V_F(\mu)$.
    Then, $\dim V^\ast=nd-n$.    
\end{proposition}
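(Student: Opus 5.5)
The plan is to diagonalize the Jacobian using the embeddings, which decouples the system into $n$ independent blocks of $d$ variables each. Then we compute the singular locus block by block.

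\textbf{Change of variables.} Extend scalars to $\C$ and use the linear change of variables given by the matrix $\Delta=(\sigma_k(\omega_j))$, which is invertible. For each block $i$, set
\[
y_{i,k}=\sum_{j=1}^d x_{i,j}\sigma_k(\omega_j),\qquad 1\le k\le d,
\]
so that $y_{i,k}$ is the formal analogue of $\sigma_k(x_i)$.

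\textbf{Transforming the system.} By Lemma~\ref{lem:5.1}, $Q(x)=\sum_k F_k(x)\omega_k$. Applying $\sigma_l$ to this identity gives, as a polynomial identity valid on $\Z^{nd}$ and hence on $\C^{nd}$,
\[
G_l(y):=\sum_{i=1}^n \sigma_l(a_i)\,y_{i,l}^m=\sum_{k=1}^d \sigma_l(\omega_k)\,F_k(x).
\]
Since $\Delta$ is invertible and the $G_l$ are obtained from the $F_k$ by an invertible constant linear map, we get $\operatorname{rank}J_F(x)=\operatorname{rank}J_G(y)$. Here $J_G$ is the Jacobian with respect to $y$; the change of variables $x\mapsto y$ is a block-diagonal invertible linear map, so it also preserves rank.

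\textbf{The Jacobian in $y$-coordinates.} We have
\[
\frac{\partial G_l}{\partial y_{i,k}}=\delta_{lk}\,m\,\sigma_l(a_i)\,y_{i,l}^{m-1}.
\]
Each $\sigma_l(a_i)$ is nonzero because $a_i\succ0$. So row $l$ of $J_G$ has nonzero entries only in the columns $(i,l)$, and these entries are nonzero exactly when $y_{i,l}\neq0$. The rows have disjoint supports, hence
\[
\operatorname{rank}J_G(y)=\#\{\,l : y_{i,l}\neq 0 \text{ for some } i\,\}.
\]

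\textbf{The singular locus.} It follows that
\[
V^\ast=\bigcup_{l=1}^d\{\,y : y_{1,l}=\dots=y_{n,l}=0\,\}.
\]
This is a union of $d$ linear subspaces, each of codimension $n$ in $\C^{nd}$. Therefore $\dim V^\ast=nd-n$, and this is preserved when we pull back via the linear isomorphism to the $x$-coordinates.

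\textbf{Main obstacle.} The only delicate step is justifying the polynomial identity relating $G_l$ and the $F_k$ over $\C$. I would handle it by noting that both sides are polynomials in $x$ with coefficients in the Galois closure, and that they agree on the Zariski-dense set $\Z^{nd}$. For the right-hand side this is immediate: $\sigma_l$ is a ring homomorphism applied to the integral identity of Lemma~\ref{lem:5.1}, and $\sigma_l(x_i)=y_{i,l}$ at integer points.

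We must also confirm that $V^\ast$, which is defined as a union over all $\mu$, is exactly the set where $\operatorname{rank}J_F<d$. This holds because the rank condition does not involve $\mu$, and every $x$ lies on $V_F(F(x))$.
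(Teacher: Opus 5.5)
Your proposal is correct and follows essentially the same route as the paper's proof: the change of variables $y_{i,r}=\sum_j x_{i,j}\sigma_r(\omega_j)$ via the invertible matrix $\Delta$, a block-diagonal Jacobian whose $r$th row vanishes exactly on $L_r=\{y_{1,r}=\dots=y_{n,r}=0\}$, and $V^\ast$ as a union of $d$ subspaces of codimension $n$. Your extra justifications — the transformed polynomial identity over $\C$, rank preservation under both the left factor $\Delta$ and the change of variables, and the $\mu$-independence of the rank condition — make explicit what the paper leaves implicit.
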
 

\begin{proof}
    Recall from Lemma \ref{lem:5.1}, the $m$-ic forms $F_1,\dots,F_d\in \Z[x]$, where $x=(x_{1,1},\dots,x_{n,d})$, are defined by the expression 
    \begin{equation}\label{e:4.1}
        \sum_{i=1}^n a_i\left(\sum_{j=1}^d x_{i,j}\omega_j\right)^m=\sum_{k=1}^d F_k(x)\omega_k.
    \end{equation}
    Let $\sigma_1,\dots\sigma_d: K\hookrightarrow\C$ be the distinct embeddings of $K$ into $\C$. Apply $\sigma_r$ to both sides of \eqref{e:4.1}. Since the coefficients $a_i$ and the basis $\omega_j$ lie in $K$, we obtain
    \begin{equation}\label{e:4.2}
        \sum_{i=1}^n \sigma_r(a_i)\left(\sum_{j=1}^d x_{i,j}\sigma_r(\omega_j)\right)^m=\sum_{k=1}^d F_k(x)\sigma_r(\omega_k).
    \end{equation}
    for all $1\leq r\leq d$. For each $r$ and each $i$, we define 
    \[
    y_{i,r}(x)=\sum_{j=1}^d x_{i,j}\sigma
    _r(\omega_j).
    \]
    Since the matrix $\Delta=(\sigma_r(\omega_j))_{1\leq j,r\leq d}$ is invertible, the collection 
    \[
    \{y_{i,r}\mid 1\leq i\leq n, \, 1\leq r\leq d\}
    \]
    forms a set of $nd$ independent linear coordinates on $\C^{nd}$. In these new coordinates, \eqref{e:4.2} becomes 
    \[
    \sum_{i=1}^n \sigma_r(a_i)y_{i,r}^m=\sum_{k=1}^d F_k(x)\sigma_r(\omega_k).
    \]
    So, the original system $F_k(x)=\mu_k$ is equivalent to 
    \begin{equation}\label{e:4.3}
        \sum_{i=1}^n\sigma_r(a_i)y_{i,r}^m=\nu_r:=\sum_{k=1}^d \mu_k\sigma_r(\omega_k) \quad \forall\, 1\leq r\leq d.
    \end{equation}

    By definition, a point $x$ is singular on $V_F(\mu)$ precisely when the Jacobian $(\partial F_k/\partial x_{i,j})_{i,j}$ has rank $<d$. Since the change of variables $x_{i,j}\mapsto y_{i,j}$ is a linear isomorphism over $\C$ (this is because $y_{i,j}$ is obtained by multiplication by the matrix $\Delta$, which is invertible), the rank of the original Jacobian is equal to the rank of the Jacobian of the new system \eqref{e:4.3}.

    For a fixed $r$, $F_r$ depends only on $y_{1,r},\dots, y_{n,r}$, and 
    \[
    \frac{\partial F_r}{\partial y_{i,s}}=\begin{cases}
        m \sigma_r(a_i)y_{i,r}^{m-1}\quad \text{if }s=r,\\
        0 \quad \text{if }s\neq r.
    \end{cases}
    \]
    Therefore, the Jacobian is a block diagonal matrix whose $r$th block is the $1\times n $ row 
    \[
    (m \sigma_r(a_1)y_{1,r}^{m-1},\dots, m \sigma_r(a_n)y_{n,r}^{m-1}).
    \]
    Note that the rank of the full Jacobian is the number of nonzero rows. Since $a_i$ is totally positive, $\sigma_r(a_i)>0$ for all $1\leq r\leq d$; together with the fact that $m>0$, we have that the $r$th row is zero exactly when $y_{i,r}=0$ for all $1\leq i \leq n$. Observe that this condition is independent of the constants $\nu_r$, and hence of $\mu$. Consequently,
    \[
    V^\ast=\bigcup_{r=1}^d L_r,
    \]
    where $L_r=\{\mathbf y\in \C^{nd}\mid y_{i,r}=0 \, \forall\, 1\leq i\leq n\}$. Each $L_r$ is defined by $n$ independent linear equations, so it is a linear subspace of codimension $n$; so $\dim L_r=nd-n$. The union of the $d$ subspaces has the same dimension, i.e., $\dim V^\ast=nd-n$, as we wanted.  
\end{proof}

\section{The singular series}\label{S6}

In order to utilize Theorem \ref{birchthm} in our case, we need a nonzero main term, which is possible only if the singular series and the singular integral are uniformly bounded away from zero. In this section, we obtain a sufficient condition for the singular series to be uniformly bounded away from zero. 

Note that, by \cite[Theorem $1$]{bir}, $\mathfrak{S}(\nu)$ exceeds a positive constant uniformly in $\nu$ if the variety $V_F(\nu)$ has a non-singular mod $p$ point for all prime $p$. However, for $p|m$, this condition is almost impossible to satisfy for generic $m$-ic forms; in particular to our setup. Thus, we need a more refined sufficient condition for the positivity of singular series. 

While finding a solution mod $p$ is typically not the main difficulty in Birch's sufficient condition, ensuring that the solution is non-singular is considerably more delicate. The key idea of our approach is to replace this qualitative requirement with a quantitative one by measuring, in the sense of $p$-adic valuations, how close a solution mod $p$ is to being non-singular. 

\begin{lemma}\label{lem:6.1}
    Let $p$ be a prime, $A\in M_{d\times nd}(\Z_p)$ be a matrix. and consider the $\Z_p$-lattice $L=A\Z_p^{nd}$. Assume that  the index $[\Z_p^d:L]=p^T<\infty$ for some $T\geq 0$. Then, the ideal generated by all $d\times d$ minors of $A$, denoted by $I_d(A)$, is equal to $p^T\Z_p$. In particular, there exists a $d\times d$ minor of $A$ whose $p$-adic valuation is $T$.
\end{lemma}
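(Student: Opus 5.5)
The plan is to use the Smith normal form of $A$ over the principal ideal domain $\Z_p$, together with the invariance of determinantal ideals under multiplication by invertible matrices. Since $\Z_p$ is a PID (indeed a discrete valuation ring), there exist $U\in GL_d(\Z_p)$ and $V\in GL_{nd}(\Z_p)$ with
\[
UAV=\bigl(D\mid 0\bigr),\qquad D=\operatorname{diag}(p^{e_1},\dots,p^{e_d}),
\]
where some of the diagonal entries might a priori be $0$ (formally $e_i=\infty$).

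First, I compute the index. Because $V\Z_p^{nd}=\Z_p^{nd}$, we have $UL=UAV\Z_p^{nd}=D\Z_p^d=\bigoplus_i p^{e_i}\Z_p$. Applying the automorphism $U$ of $\Z_p^d$ preserves the index, so $[\Z_p^d:L]=\prod_i [\Z_p:p^{e_i}\Z_p]$. Finiteness of the index forces every $e_i<\infty$, and then $p^T=p^{\sum_i e_i}$, that is, $T=\sum_i e_i$.

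Second, I compute the minor ideal. The standard fact I will invoke is that $I_d(XAY)=I_d(A)$ for invertible $X,Y$. This follows from Cauchy--Binet: each $d\times d$ minor of $XAY$ is a $\Z_p$-linear combination of $d\times d$ minors of $A$, which gives $I_d(XAY)\subseteq I_d(A)$, and the reverse inclusion comes from applying the same argument with $X^{-1},Y^{-1}$. For $(D\mid 0)$ the only nonzero $d\times d$ minor is $\det D=p^{\sum e_i}=p^T$. Hence $I_d(A)=p^T\Z_p$.

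Finally, for the "in particular" claim: the ideal $I_d(A)$ is generated by finitely many minors, each of valuation at least $T$. If all of them had valuation at least $T+1$, the ideal they generate would be contained in $p^{T+1}\Z_p$, which is a contradiction. So some minor has valuation exactly $T$. The only step that needs any care is the invariance of $I_d$ via Cauchy--Binet; the rest is bookkeeping.
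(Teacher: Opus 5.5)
Your proposal is correct and follows essentially the same route as the paper. Both use the Smith normal form $UAV=(D\mid 0)$ to get $T=\sum_i e_i$, then invariance of $I_d$ under multiplication by invertible matrices (via Cauchy--Binet, with the reverse inclusion from the inverses), then read off $I_d(D\mid 0)=p^T\Z_p$, and finally use that in the DVR $\Z_p$ the ideal is generated by a minor of minimal valuation.
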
 

\begin{proof}
    Since $\Z_p$ is a principal ideal domain, by Smith normal form, there exist matrices $U\in GL_d(\Z_p)$, $V\in GL_{nd}(\Z_p)$ such that 
    \[
    UAV=D=\left(
    \begin{array}{cccc|ccc}
    p^{e_1} & 0 & \cdots & 0 & 0 & \cdots & 0 \\
    0 & p^{e_2} & \cdots & 0 & 0 & \cdots & 0 \\
    \vdots & \vdots & \ddots & \vdots & \vdots & & \vdots \\
    0 & 0 & \cdots & p^{e_d} & 0 & \cdots & 0
    \end{array}
    \right),
    \]
    where $0\leq e_1\leq \dots\leq e_d\leq \infty$ are the elementary divisors of $A$. Since $V$ is invertible, $\Z_p^{nd}=V\Z_p^{nd}$; thus $L=AV\Z_p^{nd}$, so $UL=UAV\Z_p^{nd}=D\Z_p^{nd}$. Note that 
    \[
    D\Z_p^{nd}=\bigoplus_{j=1}^d p^{e_j}\Z_p.
    \]
    Therefore, 
    \[
    L=U^{-1}D\Z_p^{nd} =U^{-1}\left( \bigoplus_{j=1}^d p^{e_j}\Z_p\right),
    \]
    and $[\Z_p^{d}:L]=p^{e_1+\dots+e_d}$. By our assumption on the index, it follows that $e_j<\infty$ for all $j$, and ${e_1+\dots+e_d}=T$. We will now show that $I_d(A)=p^T\Z_p$.

    If $S\subset \{1,2,\dots,nd\}$ is a set of $d$ column indices, then the columns of $UA$ indexed by $S$ are $(UA)_S=UA_S$, where $A_S$ is the $d\times d$ submatrix of $A$ with columns indexed by $S$. Therefore, $\det (UA)_S=\det U\det A_S$. Since $\det U$ is a unit, summing over all choices of $S$ gives $I_d(UA)=I_d(A)$. 

    Now, for the product $AV$, we use the \emph{Cauchy--Binet formula}. Again, for a choice of $S$ as above, $(AV)_S=AV_S$, where $V_S$ is a $nd\times d$ matrix formed by columns of $V$ indexed by $S$. Then, by the Cauchy--Binet formula, we have 
    \[
    \det (AV)_S=\sum_{\substack{S'\subset \{1,\dots,nd\}\\|S'|=d}} \det A_{S'}\det (V_S)_{S'},
    \]
    where $(V_S)_{S'}$ is the $d\times d$ submatrix of $V_S$ with rows indexed by $S'$. Thus, each $d\times d$ minor of $AV$ is a $\Z_p$-linear combination of the $d\times d$ minors of $A$, giving $I_d(AV)\subseteq I_d(A)$.

    Applying the same argument with $V^{-1}$ in place of $V$ gives 
    \[
    I_d(A)=I_d((AV)V^{-1})\subseteq I_d(AV).
    \]
    In total, we get $I_d(A)=I_d(AV)$. Combining both cases, we obtain
    \[
    I_d(A)=I_d(UA)=I_d(UAV)=I_d(D)=p^T\Z_p.
    \]

    Since $\Z_p$ is a discrete valuation ring, the ideal generated by the minors is principal, and the exponent is the minimum of the $p$-adic valuations of $d\times d$ minors. Hence, there exists a $d\times d$ minor with $p$-adic valuation exactly $T$, completing the proof.
\end{proof}

The following lemma already establishes connection between bounded $p$-adic valuations and the singular series. 

\begin{lemma}\label{lem:6.2}
    Let the notations and hypothesis be as in Theorem \ref{birchthm}. Let $\nu\in\Z^d$ and $p$ be a prime. Suppose there exists $\tilde{x}\in \Z_p^{nd}$ such that $F_k(\tilde{x})=\nu_k$ for all $1\leq k\leq d$, and $v_p(J_0)=\tau<\infty$ for some $d\times d$ minor $J_0$ of the Jacobian $J_F(\tilde{x})$. Then, we have 
    \[
    \mathfrak{S}_p(\nu)\geq p^{-d(2\tau+1)(n-1)}.
    \]
\end{lemma}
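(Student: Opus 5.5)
Using Theorem \ref{birchthm}(3), we have $\mathfrak{S}_p(\nu)=\lim_{N\to\infty}p^{-Nd(n-1)}\mathcal{A}_p(N;\nu)$. So it suffices to show that for every $N\geq 2\tau+1$,
\[
\mathcal{A}_p(N;\nu)\geq p^{(N-2\tau-1)(nd-d)}=p^{Nd(n-1)}\,p^{-d(2\tau+1)(n-1)}.
\]
To get this, we construct many solutions modulo $p^N$ by Hensel lifting from residue classes near $\tilde{x}$. Let $S\subset\{1,\dots,nd\}$ be the set of $d$ column indices giving the minor $J_0$, and let $S^c$ be its complement, of size $nd-d$.

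First, we parametrize perturbations. Put $\tau'=\tau+1$ and write $\tilde x=(\tilde x_S,\tilde x_{S^c})$. For each $w\in\Z_p^{nd-d}$, consider the point $x^{(w)}$ with $x^{(w)}_S=\tilde x_S$ and $x^{(w)}_{S^c}=\tilde x_{S^c}+p^{2\tau+1}w$. Viewing $G_k(y)=F_k(y,x^{(w)}_{S^c})-\nu_k$ as $d$ polynomials in the $d$ variables $y=x_S$, we apply Lemma \ref{genhensel} at $a=\tilde x_S$.

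Two facts are needed, both from Taylor expansion with integral coefficients:
\begin{itemize}
\item $G_k(\tilde x_S)=F_k(x^{(w)})-F_k(\tilde x)\equiv 0\pmod{p^{2\tau+1}}$, since $x^{(w)}\equiv\tilde x\pmod{p^{2\tau+1}}$.
\item The $S$-minor of $J_F(x^{(w)})$ is congruent to $J_0$ modulo $p^{2\tau+1}$, and hence still has valuation exactly $\tau$, because $2\tau+1>\tau$.
\end{itemize}
Lemma \ref{genhensel} then gives $y^{(w)}\in\Z_p^d$ with $F(y^{(w)},x^{(w)}_{S^c})=\nu$ exactly. These are genuine $p$-adic solutions, so their reductions modulo $p^N$ are counted by $\mathcal{A}_p(N;\nu)$.

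Next, we count distinct residues. Two such solutions with $w\not\equiv w'\pmod{p^{N-2\tau-1}}$ have different $S^c$-coordinates modulo $p^N$, so they are distinct modulo $p^N$. Letting $w$ range over $(\Z/p^{N-2\tau-1}\Z)^{nd-d}$ therefore gives at least $p^{(N-2\tau-1)d(n-1)}$ distinct solutions modulo $p^N$. Multiplying by $p^{-Nd(n-1)}$ and letting $N\to\infty$ yields the claimed bound.

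The main point requiring care is the second bullet, the stability of $v_p$ of the minor under the perturbation. It holds because the entries of the Jacobian are integral polynomials, so a perturbation of size $p^{2\tau+1}$ changes the minor by a multiple of $p^{2\tau+1}$, and this cannot lower its valuation below $\tau$. Everything else is bookkeeping. Note also that $\tau<\infty$ forces $n\geq1$, and the case $N<2\tau+1$ is irrelevant since only the limit matters.
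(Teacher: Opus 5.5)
Your proposal is correct and follows the paper's proof essentially step for step. You fix the $d$ columns carrying the minor, perturb the remaining $nd-d$ coordinates by multiples of $p^{2\tau+1}$, and use the stability of the minor's valuation together with Lemma \ref{genhensel} to solve exactly for the distinguished coordinates. You then count the $p^{(N-2\tau-1)(nd-d)}$ solutions that are distinct modulo $p^N$ and pass to the limit via Theorem \ref{birchthm}(3); the auxiliary $\tau'=\tau+1$ is never used and can be dropped.
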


\begin{proof}
    Without loss of generality, assume $J_0=\det B$, where $B=(\partial F_k/\partial x_\ell (\tilde{x}))_{1\leq k,\ell\leq d}$. Let us write the variable as $x=(u,w)$ with $u\in\Z_p^d$ and $w\in \Z_p^{nd-d}$, and write $\tilde{x}=(\tilde{u},\tilde{w})$. Let $w\in \Z_p^{nd-d}$ with $w\equiv \tilde{w}\pmod {p^{2\tau+1}}$, and define 
    \[
    f_w(u)=F(u,w)-\nu=(F_1(u,w)-\nu_1,\dots,F_d(u,w)-\nu_d)\in \Z_p[u]^d.
    \]
    Since $F_k(\tilde{x})=F_k(\tilde{u},\tilde{w})=\nu_k$, we have 
    \[
    f_w(\tilde{u})=F(\tilde{u},w)-\nu=(F_1(\tilde{u},w)-F_1(\tilde{u},\tilde{w}), \dots, F_d(\tilde{u},w)-F_d(\tilde{u},\tilde{w})).
    \]

    Since $w-\tilde{w}\in p^{2\tau+1}\Z_p^{nd-d}$, it follows that 
    \[
    f_w(\tilde{u})\equiv 0\pmod {p^{2\tau+1}}, \quad J_{f_w}(\tilde{u})\equiv B \pmod {p^{2\tau+1}},
    \]
    where $J_{f_w}$ is the Jacobian of the variety given by $f_w=0$. Since $J_{f_w}(\tilde{u})\equiv B \pmod {p^{2\tau+1}}$, their determinant must be congruent mod $p^{2\tau+1}$, i.e., $\det J_{f_w}(\tilde{u})\equiv J_0\pmod {p^{2\tau+1}}$. By our assumption $v_p(J_0)=\tau<2\tau+1$, we get that $v_p(\det J_{f_w}(\tilde{u}))=\tau$. By generalized Hensel's lemma (see Lemma \ref{genhensel}), there exists a unique $u(w)\in\Z_p^d$ such that $f_w(u(w))=0$ and $u(w)\equiv \tilde{u}\pmod {p^{\tau+1}}$, i.e., $F_k(u(w),w)=\nu_k$ and $u(w)\equiv \tilde{u}\pmod {p^{\tau+1}}$ for all $k$.

    Now, fix an integer $N\geq 2\tau+1$, and define the set 
    \[
    \mathcal{W}=\{w\in (\Z/p^N\Z)^{nd-d}\mid w\equiv \tilde{w} \pmod {p^{2\tau+1}}\}.
    \]
    Observe that $|\mathcal{W}|=p^{(N-(2\tau+1))(nd-d)}$; indeed, each $nd-d$ coordinate of $w$ can be chosen freely in $\Z/p^N\Z$ subject to mod $p^{2\tau+1}$, giving $p^{N-(2\tau+1)}$ possibilities per coordinate. 

    Each $w\in \mathcal{W}$ can be lifted to an element in $\Z_p^{nd-d}$, still denoted by $w$, satisfying the same congruence. Then, by the above argument, we get a $\Z_p$ point $x(w)=(u(w),w)$ such that $F_k(u(w),w)=\nu_k$ for all $k$. Reducing this modulo $p^N$, we have $F_k(x)\equiv \nu_k\pmod {p^N}$ for all $k$. Note that two distinct $w,w'\in \mathcal{W}$ give two distinct solutions, as their $w$-coordinates differ. Thus, we have 
    \[
     \mathcal{A}_p(N;\nu)=\#\{x\in (\Z/p^N\Z)^{nd}\mid F_k(x)\equiv \nu_k\pmod {p^N} \, \forall 1\leq k\leq d\}\geq p^{(N-(2\tau+1))(nd-d)}.
    \]
    By Theorem \ref{birchthm}\,(3), 
    \[
    \mathfrak{S}_p(\nu)=\lim_{N\rightarrow\infty}p^{-Nd(n-1)} \mathcal{A}_p(N;\nu)\geq  p^{-d(2\tau+1)(n-1)},
    \]
    as we wanted.
\end{proof}

Next, using the previous two lemmas, we establish a connection between local representation by a diagonal $m$-ic form over $K$ and minors with bounded $p$-adic valuations.

\begin{proposition}\label{pro:6.3}
    Let $p$ be a rational prime, $m\geq 2$ be an even integer, $K$ a totally real number field of degree $d$, $\alpha\in \co_K^+$, and $Q(x)=\sum_{i=1}^n a_ix_i^m$ be a totally positive definite diagonal $m$-ic form over $K$. Let $V_F(\alpha)$ be the variety defined in \eqref{eq:VF} corresponding to $\alpha$.

    Assume that for every prime ideal $\mathfrak{p}|p$, there exists $x^\mathfrak{p}=(x_1^\mathfrak{p},\dots,x_n^\mathfrak{p})\in \co_\mathfrak{p}^n$ such that $Q(x^\mathfrak{p})=\alpha$, and put 
    \[
    s_\mathfrak{p}=\min_{1\leq i\leq n}v_p(m a_i (x_i^\mathfrak{p})^{m-1}).
    \]
    Then, the variety $V_F(\alpha)$ possesses a point $\tilde{x}\in\Z_p^{nd}$ such that the ideal generated by all $d\times d$ minors of $J_F(\tilde{x})$ is $p^T\Z_p$, where 
    \[
    T=\sum_{\mathfrak{p}|p}f(\mathfrak{p}|p)s_\mathfrak{p},
    \]
    and $f(\mathfrak{p}|p)$ denotes the inertia degree. In particular, there exists a $d\times d$ minor of $J_F(\tilde{x})$ with $p$-adic valuation exactly $T$.
\end{proposition}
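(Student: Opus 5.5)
Our plan is to pass to the completion $K\otimes_\Q\Q_p\cong\prod_{\mathfrak{p}|p}K_\mathfrak{p}$, under which $\co_K\otimes_\Z\Z_p\cong\prod_{\mathfrak{p}|p}\co_\mathfrak{p}$. Since $\omega_1,\dots,\omega_d$ is a $\Z$-basis of $\co_K$, it is also a $\Z_p$-basis of $\co_K\otimes\Z_p$. The local points $x^\mathfrak{p}$ therefore glue, via the Chinese remainder decomposition, to a single tuple $\tilde X=(\tilde X_1,\dots,\tilde X_n)\in(\co_K\otimes\Z_p)^n$ whose $\mathfrak{p}$-component is $x^\mathfrak{p}$. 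Writing $\tilde X_i=\sum_j\tilde x_{i,j}\omega_j$ with $\tilde x_{i,j}\in\Z_p$ defines $\tilde x\in\Z_p^{nd}$. The identity of Lemma \ref{lem:5.1}, $Q(X)=\sum_k F_k(x)\omega_k$, is a polynomial identity and so holds over $\co_K\otimes\Z_p$. Because $Q(\tilde X)=\alpha$ componentwise, comparing coordinates in the basis $\omega_k$ gives $F_k(\tilde x)=\alpha_k$, so $\tilde x\in V_F(\alpha)$.

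Next we identify the Jacobian intrinsically. Differentiating $Q(X)=\sum_kF_k(x)\omega_k$ with respect to $x_{i,j}$ gives
\[
\sum_k \frac{\partial F_k}{\partial x_{i,j}}(\tilde x)\,\omega_k = m a_i\tilde X_i^{m-1}\omega_j .
\]
Thus the $(i,j)$ column of $J_F(\tilde x)$ is the coordinate vector of $ma_i\tilde X_i^{m-1}\omega_j$ in the basis $\omega_k$. Identifying $\Z_p^d$ with $\co_K\otimes\Z_p$ through this basis, the column span $L=J_F(\tilde x)\Z_p^{nd}$ equals
\[
\sum_{i}ma_i\tilde X_i^{m-1}\,(\co_K\otimes\Z_p),
\]
the ideal of $\prod_\mathfrak{p}\co_\mathfrak{p}$ generated by the elements $g_i=ma_i\tilde X_i^{m-1}$. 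Componentwise, this ideal is $\prod_\mathfrak{p}\mathfrak{p}^{s_\mathfrak{p}}\co_\mathfrak{p}$, since an ideal of the DVR $\co_\mathfrak{p}$ generated by several elements is $\mathfrak{p}$ raised to their minimal valuation. Here $s_\mathfrak{p}$ should be read as the minimum of $v_\mathfrak{p}$ of the $g_i$'s $\mathfrak{p}$-components; we will interpret the statement's $v_p$ as the normalized valuation $v_\mathfrak{p}$, and note this at the appropriate point.

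The index is then
\[
[\co_K\otimes\Z_p:L]=\prod_\mathfrak{p}[\co_\mathfrak{p}:\mathfrak{p}^{s_\mathfrak{p}}]=\prod_{\mathfrak{p}}\N(\mathfrak{p})^{s_\mathfrak{p}}=p^{\sum f(\mathfrak{p}|p)s_\mathfrak{p}}=p^T .
\]
This is finite because each $s_\mathfrak{p}<\infty$: if all $x_i^\mathfrak{p}$ vanished, then $\alpha=0$, which contradicts total positivity. Lemma \ref{lem:6.1} then gives $I_d(J_F(\tilde x))=p^T\Z_p$, and hence a $d\times d$ minor of valuation exactly $T$.

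The main obstacle is the bookkeeping in the gluing step. We must check rigorously that the isomorphism $\co_K\otimes\Z_p\cong\prod\co_\mathfrak{p}$ is compatible with the coordinate map and with multiplication, so that both the equation and the derivative identity transfer. We must also confirm that the column module is exactly the ideal generated by the $g_i$, using that the $\omega_j$ span $\co_K\otimes\Z_p$ over $\Z_p$, and not merely a submodule of it. We would handle this by viewing $J_F(\tilde x)$ as the matrix of the $\Z_p$-linear map $(\co_K\otimes\Z_p)^n\to\co_K\otimes\Z_p$, $(Y_i)\mapsto\sum_i g_iY_i$, written in the bases $\{\omega_j\}$ and $\{\omega_k\}$.
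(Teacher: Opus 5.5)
Your proposal is correct and follows the paper's proof essentially step by step: glue the local points via $\co_K\otimes_\Z\Z_p\cong\prod_{\mathfrak{p}|p}\co_\mathfrak{p}$, differentiate the Weil-restriction identity to see $J_F(\tilde x)$ as the matrix of $(y_1,\dots,y_n)\mapsto\sum_i ma_i\tilde x_i^{m-1}y_i$, identify its image with $\prod_{\mathfrak{p}|p}\mathfrak{p}^{s_\mathfrak{p}}\co_\mathfrak{p}$ of index $p^T$, and invoke Lemma~\ref{lem:6.1}. Your reading of $v_p$ as $v_\mathfrak{p}$ is exactly what the paper's proof uses. Your remark that $s_\mathfrak{p}<\infty$ because $\alpha\neq 0$ fills a small point the paper leaves implicit.
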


\begin{proof}
    Let us first establish that $V_F(\alpha)$ has a $\Z_p$ point. Recall the natural isomorphism of $\Z_p$-algebra 
    \[
    \phi: \co_K \otimes_{\Z} \Z_p \rightarrow \prod_{\mathfrak{p}|p} \co_\mathfrak{p}: y\otimes 1\mapsto (\iota_\mathfrak{p}(y))_{\mathfrak{p}|p},
    \]
    where $\iota_\mathfrak{p}: \co_K\hookrightarrow \co_\mathfrak{p}$ is the canonical embedding.
    By our assumption, for each $\mathfrak{p}|p$, there exists a $x^\mathfrak{p}=(x_1^\mathfrak{p}, \dots, x_n^\mathfrak{p})\in \co_\mathfrak{p}^n$ such that $Q(x^{\mathfrak{p}})=\alpha$ in $\co_\mathfrak{p}$. For each $1\leq i\leq n$, the tuple $(x_i^\mathfrak{p})_{\mathfrak{p}|p}$ is an element of $\prod_{\mathfrak{p}|p}\co_\mathfrak{p}$. Since $\phi$ is an isomorphism, there exists a unique $\tilde{x_i}\in  \co_K \otimes_{\Z} \Z_p$ such that 
    \[
    \phi(\tilde{x_i})=(x_i^\mathfrak{p})_{\mathfrak{p}|p}.
    \]
    Define $\tilde{x}=(\tilde{x_1},\dots,\tilde{x_n})\in (\co_K \otimes_{\Z} \Z_p)^n$. We extend $Q$ to $\co_K \otimes_{\Z} \Z_p$ by 
    \[
    Q(\tilde{x})=\sum_{i=1}^n (a_i\otimes 1)\tilde{x_i}^m.
    \]
    Applying $\phi$, we get 
    \[
    \phi(Q(\tilde{x}))=\sum_{i=1}^n \phi(a_i\otimes 1) \phi(\tilde{x_i})^m=\sum_{i=1}^n(\iota_\mathfrak{p}(a_i))_{\mathfrak{p}|p}(x_i^\mathfrak{p})_{\mathfrak{p}|p}^m.
    \]
    The multiplication in the product ring is componentwise, so
    \[
    \phi(Q(\tilde{x}))=\left(\sum_{i=1}^n \iota_\mathfrak{p}(a_i)(x_i^\mathfrak{p})^m\right)_{\mathfrak{p}|p}=(\iota_\mathfrak{p}(Q(x^\mathfrak{p})))_{\mathfrak{p}|p}.
    \]
    As $Q(x^{\mathfrak{p}})=\alpha$ in $\co_\mathfrak{p}$, we have 
    \[
     \phi(Q(\tilde{x}))=(\iota_\mathfrak{p}(Q(x^\mathfrak{p})))_{\mathfrak{p}|p}=(\iota_\mathfrak{p}(\alpha))_{\mathfrak{p}|p}=\phi(\alpha\otimes 1).
    \]
    Since $\phi$ is injective, $Q(\tilde{x})=\alpha\otimes 1$ holds in $\co_K\otimes_{\Z} \Z_p$. The $\Z_p$-module  $\co_K\otimes_{\Z} \Z_p$ is free of rank $d$ with a basis 
    \[
    u_j:=\omega_j\otimes 1\quad \forall\, 1\leq j\leq d,
    \]
    where $\omega_1,\dots,\omega_d$ is an integral basis from Proposition \ref{unitadjust}.
    We write each $\tilde{x_i}$ in this basis as
    \[
    \tilde{x_i}=\sum_{j=1}^d \tilde{x}_{i,j} u_j, \quad \tilde{x}_{i,j}\in\Z_p.
    \]
    Now, compute $Q(\tilde{x})$ by expanding everything in the basis $u_1,\dots,u_d$, and using the properties of the tensor product, we obtain 
    \[
    \sum_{i=1}^n (a_i\otimes 1)\left(\sum_{j=1}^d \tilde{x}_{i,j} u_j\right)^m=\sum_{k=1}^d F_k(\tilde{x})u_k,
    \]
    where $F_k$ are the same polynomials as in Lemma \ref{lem:5.1}, now evaluated at $\tilde{x}_{i,j}\in\Z_p$. On the other hand, the image of $\alpha$ in $\co_K\otimes_{\Z}\Z_p$ is
    \[
    \alpha\otimes 1=\left(\sum_{k=1}^d \alpha_k \omega_k\right)\otimes 1=\sum_{k=1}^d \alpha_k (\omega_k\otimes 1)=\sum_{k=1}^d \alpha_k u_k.
    \]
    Since $Q(\tilde{x})=\alpha$ and $u_k$ are a $\Z_p$-basis, we get the equality
    \[
    F_k(\tilde{x})=\alpha_k \quad \forall \, 1\leq k\leq d.
    \]
    Hence, we obtained a $\tilde{x}\in \Z_p^{nd}$ on $V_F(\alpha)$. 

    Consider the polynomial identity over $\co_K\otimes_{\Z}\Z_p$
    \begin{equation}\label{e:1.1modp}
    \sum_{i=1}^n (a_i\otimes 1)\left(\sum_{j=1}^d x_{i,j}u_j \right)^m
    =\sum_{k=1}^d F_k(x)u_k.
    \end{equation}
    This identity holds because both sides define the same polynomial when evaluated on
    elements of $\Z_p$.
    Differentiating \eqref{e:1.1modp} with respect to $x_{i,j}$ and evaluating at
    $x=\tilde{x}$, we obtain
    \begin{equation}\label{e:1.2modp}
    \begin{aligned}
    \frac{\partial}{\partial x_{i,j}}\left((a_i\otimes 1)\Bigl(\sum_{r=1}^dx_{i,r}u_r\Bigr)^m\right)
    &= m\,(a_i\otimes 1)\Bigl(\sum_{r=1}^d\tilde{x}_{i,r}u_r\Bigr)^{m-1} u_j \\
    &= \sum_{k=1}^d \frac{\partial F_k}{\partial x_{i,j}}(\tilde{x}) u_k.
    \end{aligned}
    \end{equation}

    Since  $\tilde{x}_i=\sum_{r=1}^d \tilde{x}_{i,r}u_r$,
     \eqref{e:1.2modp} becomes
    \begin{equation}\label{e:1.3modp}
    m\,(a_i\otimes 1)\,\tilde{x}_i^{\,m-1}u_j
       =\sum_{k=1}^d \frac{\partial F_k}{\partial x_{i,j}}(\tilde{x})\,u_k .
    \end{equation}
    Define
    \[
    c_i := m\,(a_i\otimes 1)\,\tilde{x}_i^{\,m-1}\;\in\;\co_K\otimes_{\Z}\Z_p .
    \]
    Then, the left‑hand side of \eqref{e:1.3modp} is $c_i u_j$.
    Now expand $c_i u_j$ in the basis $u_1,\dots,u_d$, i.e., 
    \[
    c_i u_j = \sum_{k=1}^d C_{i,j,k}\,u_k ,\qquad C_{i,j,k}\in\Z_p .
    \]
    Substituting this into \eqref{e:1.3modp} and comparing coefficients gives
    \[
    \frac{\partial F_k}{\partial x_{i,j}}(\tilde{x})=C_{i,j,k}
    \qquad \forall\, 1\le i\le n,\;\forall\, 1\le j,k\le d.
    \]

    For a fixed $1\le i\le n$, the partial derivatives with respect to the variables
    $x_{i,1},\dots,x_{i,d}$ form a $d\times d$ matrix $J_i$, whose $(k,j)$-entry is
    $C_{i,j,k}$.  By construction, $J_i$ is exactly the matrix of multiplication by $c_i$
    on the $\Z_p$-algebra $\co_K\otimes_{\Z}\Z_p$ with respect to the basis $u_1,\dots, u_d$.
    The full Jacobian is the concatenation
    \[
    J_F(\tilde{x})=\bigl(J_1\mid J_2\mid\cdots\mid J_n\bigr).
    \]

    Identifying $\co_K\otimes_{\Z}\Z_p\cong\Z_p^{\,d}$, the Jacobian $J_F(\tilde{x})$
    defines a $\Z_p$-linear map
    \[
    f: (\co_K\otimes_{\Z}\Z_p)^n \rightarrow \co_K\otimes_{\Z}\Z_p:
    (y_1,\dots,y_n)\longmapsto \sum_{i=1}^n c_i y_i.
    \]
    The image of $f$ is precisely the ideal generated by $c_i$
    \[
    I = \langle c_1,c_2,\dots,c_n\rangle \subseteq \co_K\otimes_{\Z}\Z_p .
    \]
    Then, we have 
    \[
    \phi(I)=\sum_{i=1}^n\phi(c_i)\cdot \prod_{\mathfrak{p}|p}\co_\mathfrak{p}=\prod_{\mathfrak{p}|p}I_\mathfrak{p},
    \]
    where $I_\mathfrak{p}$ is the ideal generated by the image of $c_i$ under the projection to $\co_\mathfrak{p}$. More precisely, $\phi(c_i)=(m a_i (x_i^\mathfrak{p})^{m-1})_{\mathfrak{p}|p}$. Therefore, for a fixed $\mathfrak{p}|p$, 
    \[
    I_\mathfrak{p}=\langle m a_i (x_i^\mathfrak{p})^{m-1} \mid 1\leq i\leq n\rangle
    \]
    in $\co_\mathfrak{p}$. Since $\co_\mathfrak{p}$ is a discrete valuation ring, $I_\mathfrak{p}=\mathfrak{p}^{s_\mathfrak{p}}\co_\mathfrak{p}$. We now compute the index of $I$ in $\co_K\otimes_\Z\Z_p$. Using the isomorphism $\phi$, we get 
    \[
    [\co_K\otimes_\Z\Z_p:I]= \left[\prod_{\mathfrak{p}|p} \co_\mathfrak{p}:  \prod_{\mathfrak{p}|p}I_\mathfrak{p}\right]=\prod_{\mathfrak{p}|p} [\co_\mathfrak{p}: \mathfrak{p}^{s_\mathfrak{p}}\co_\mathfrak{p}]=\prod_{\mathfrak{p}|p}p^{f(\mathfrak{p}|p)s_\mathfrak{p}}=p^{\sum_{\mathfrak{p}|p}f(\mathfrak{p}|p)s_\mathfrak{p}}=p^T.
    \]
    This shows that $J_F(\tilde{x})\Z_p^{nd}=I$ has index $p^T$ in $\Z_p^d$. By Lemma \ref{lem:6.1}, the proposition follows.
\end{proof}

\section{Asymptotic local--global principle}

In this section, we prove the asymptotic local--global principle (Corollary \ref{cor:main}) using Theorem \ref{birchthm}. To this end, we utilize the notions and results developed in the previous three sections. Note that we have not yet proven that the singular integral is uniformly bounded away from zero. We establish this while proving the asymptotic local--global principle. 

\begin{theorem}\label{asymlocalglobal}
    Let $m\geq 2$ be an even integer, $K$ be a totally real number field of degree $d$, and $Q=\sum_{i=1}^na_ix_i^m$ be a totally positive definite diagonal $m$-ic form in $n$ variables over $K$ with $n>d(d+1)(m-1)2^{m-1}$.
    
    Assume that there exists an integer $t\geq 0$ such that for every $\alpha\in\co_K^+$ and for every rational prime $p$ there is $\tilde{x}\in\Z_p^{nd}$ on $V_F(\alpha)$ such that some $d\times d$ minor of $J_F(\tilde{x})$ has $p$-adic valuation at most $t$, where $V_F(\alpha)$ is the variety defined in \eqref{eq:VF} and $J_F(\tilde{x})$ is its Jacobian evaluated at $\tilde{x}$.

    Then, there exists a constant $C>0$ depending only on $K,m,Q$, and $t$ such that every $\alpha\in\co_K^+$ with $\N(\alpha)>C$ is represented by $Q$.
\end{theorem}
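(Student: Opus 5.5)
We will derive the statement from Theorem \ref{birchthm}, applied to the system $F=(F_1,\dots,F_d)$ of Lemma \ref{lem:5.1}. By Lemma \ref{lem:equivalentsystem}, representing $\alpha$ by $Q$ is the same as producing an integer point on $V_F(\alpha)$. Since $Q$ represents $\alpha$ if and only if it represents $\alpha\varepsilon^m$, we may first replace $\alpha$ by the unit multiple $\beta=\alpha\varepsilon^m$ from Proposition \ref{unitadjust}. Its coordinates then satisfy $c_1\N(\alpha)^{1/d}\le\beta_k\le c_2\N(\alpha)^{1/d}$. We set $P=\N(\alpha)^{1/(dm)}$ and $\nu=(\beta_1,\dots,\beta_d)=P^m\mu$. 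Then $\mu$ ranges over the fixed compact box $[c_1,c_2]^d\subset\R_{>0}^d$.

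Birch's hypothesis comes from Proposition \ref{singdim}. It gives $nd-\dim V^\ast=n$, so $K_0=n/2^{m-1}>d(d+1)(m-1)$ by the assumption on $n$. It then suffices to show that the main term $P^{d(n-m)}\mathfrak{S}(\nu)\Psi(\mu)$ is at least a constant multiple of $P^{d(n-m)}$, uniformly in $\mu$. The error term $O(P^{d(n-m)-\delta})$ is then smaller for $P$ large, which gives $M(P;\nu)>0$ once $\N(\alpha)>C$.

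For the singular series, Theorem \ref{birchthm}(2) gives $\prod_{p>p_0}\mathfrak{S}_p(\nu)\ge 1/2$ uniformly. For each of the finitely many primes $p\le p_0$, the hypothesis supplies $\tilde x\in\Z_p^{nd}$ on $V_F(\nu)$ with a $d\times d$ minor of valuation $\tau\le t$. Lemma \ref{lem:6.2} then gives $\mathfrak{S}_p(\nu)\ge p^{-d(2t+1)(n-1)}$. We must also check that $V_F(\nu)$ for $\beta$ is the variety the hypothesis is applied to; this holds because the hypothesis is stated for every $\alpha\in\co_K^+$, in particular for $\beta$. Hence $\mathfrak{S}(\nu)\ge\frac12\prod_{p\le p_0}p^{-d(2t+1)(n-1)}$, a positive constant depending only on $K,m,Q,t$.

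The singular integral is the step needing the most care: we must exhibit a compact set $\mathcal C\subset\operatorname{int}\mathcal B\setminus V^\ast$ whose image under $F$ covers the whole box $[c_1,c_2]^d$ of values of $\mu$, and apply Theorem \ref{birchthm}(1). We will use the diagonalizing coordinates $y_{i,r}$ from the proof of Proposition \ref{singdim}, in which the system becomes $\sum_i\sigma_r(a_i)y_{i,r}^m=\nu_r'$, where $\nu_r'=\sum_k\mu_k\sigma_r(\omega_k)$. Because $\mu_k\ge c_1>0$ and each $\omega_k$ is totally positive... but the $\omega_k$ need not be totally positive. Instead we use the real embedding picture directly: $\nu'_r=\sigma_r(\beta)/P^m$, and Lemma \ref{lem:4.1} bounds this between $c_1'$ and $c_2'$ (after normalizing $P$). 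So each target $\nu'_r$ lies in a compact subinterval of $(0,\infty)$. We then choose $y_{1,r}=(\nu'_r/\sigma_r(a_1))^{1/m}$ with the other $y_{i,r}$ equal to $0$, or a symmetric spread over all $i$. This point has $y_{1,r}\neq0$ for every $r$, so it avoids every $L_r$ and hence $V^\ast$. These points vary continuously in $\nu'$, so they form a compact set $\mathcal C$ disjoint from $V^\ast$. Pulling back by $\Delta^{-1}$ and rescaling, by enlarging $P$ by a fixed constant and taking $\mathcal B$ a fixed box of side at most $1$ (translated if needed so that $\mathcal C$ lies in its interior), gives the required set. Then $\Psi(\mu)\ge c(\mathcal C)>0$, and combining the three bounds proves the theorem.
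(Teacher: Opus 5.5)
Your proposal is correct and takes essentially the same route as the paper. It uses the unit adjustment from Lemma~\ref{lem:4.1} and Proposition~\ref{unitadjust}, and Birch's theorem with $K_0=n/2^{m-1}$ from Proposition~\ref{singdim}. The singular series is bounded via Theorem~\ref{birchthm}(2) for large primes and Lemma~\ref{lem:6.2} for small ones, and the singular integral via the points $y_{1,r}=(\lambda_r/\sigma_r(a_1))^{1/m}$ lying off $V^\ast$. Your step of enlarging $P$ by a fixed constant plays exactly the role of the paper's constant $Y$ in $P=\lceil (Y\beta_{\max})^{1/m}\rceil$, which shrinks these points into $[-1/2,1/2]^{nd}$.
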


\begin{proof}
 Throughout, we work with an integral basis $\{\omega_1, \omega_2,\dots, \omega_d\}$ from Proposition \ref{unitadjust}. By Lemma \ref{lem:equivalentsystem}, $Q$ representing $\alpha\in\co_K^+$ is equivalent to the representation of coordinates of $\alpha$ (with respect to this integral basis) by corresponding integral $m$-ic forms $F_k$ defined in Lemma \ref{lem:5.1}. To this end, we apply Theorem \ref{birchthm}. Let us first verify the hypothesis of Theorem \ref{birchthm}. 
 By Proposition \ref{singdim}, $\dim V^\ast=nd-n$, so $K_0=n2^{-(m-1)}>d(d+1)(m-1)$ by our assumption.

 Let $\alpha\in\co_K^+$. By Lemma \ref{lem:4.1}, there exists $\varepsilon\in\co_K^\times$ such that $\beta:=\alpha\varepsilon^m$ satisfies 
 \begin{equation}\label{e:7.1}
     c_1'\N(\alpha)^{1/d}\leq \sigma_i(\beta)\leq c_2'\N(\alpha)^{1/d},
 \end{equation}
 for all $1\leq i\leq d$, where $\sigma_1,\dots,\sigma_d$ are real embeddings of $K$, and $c_1',c_2'>0$ are constants from Lemma \ref{lem:4.1}. Writing $\beta=\sum_{k=1}^d\beta_k\omega_k$ with $\beta_k\in\Z$; from Proposition \ref{unitadjust}, it follows that 

 \begin{equation}\label{e:7.2}
     c_1\N(\alpha)^{1/d}\leq \beta_k\leq c_2\N(\alpha)^{1/d}
 \end{equation}
for all $1\leq k\leq d$, where $c_1,c_2>0$ are constants from Proposition \ref{unitadjust}. Note that $\beta\in\co_K^+$ and $\N(\beta)=\N(\alpha)$. Let us write $\bar{\beta}=(\beta_1,\dots,\beta_d)\in\Z^d$.

Set 
\[
\beta_{max}=\max_{1\leq k\leq d}\beta_k.
\]
Choose a constant $Y\geq 1$ (to be determined later) depending only on $K,m,Q$, and define 
\[
P=\lceil (Y\beta_{max})^{1/m}\rceil.
\]
Since $Y\beta_{max}\geq c_1\N(\alpha)^{1/d}$, we have 
\begin{equation}\label{e:7.3}
    (Y\beta_{max})^{1/m}\leq P \leq 2 (Y\beta_{max})^{1/m}.
\end{equation}

Now, set $\mu=\bar{\beta}/P^m\in\R^d$. For each embedding $\sigma_r$, we let 
\[
\lambda_r(\mu)=\sum_{k=1}^d\mu_k\sigma_r(\omega_k)=\frac{\sigma_r(\beta)}{P^m}.
\]
From \eqref{e:7.1}--\eqref{e:7.3} we obtain, for each $k,r$,
\begin{equation}\label{e:7.4}
    \frac{c_1}{2^mc_2Y}\leq \mu_k\leq \frac{1}{Y}, \quad \frac{c_1'}{2^mc_2Y}\leq \lambda_r(\mu)\leq \frac{c_2'}{c_1Y}
\end{equation}
Thus, $\mu$ lies in the compact set 
\[
\mathcal{K}_Y=\left\{\mu\in \R^d \Bigg|  \frac{c_1}{2^mc_2Y}\leq \mu_k\leq \frac{1}{Y}, \quad \frac{c_1'}{2^mc_2Y}\leq \lambda_r(\mu)\leq \frac{c_2'}{c_1Y}, \, \forall\, 1\leq k,r\leq d\right\}.
\]
Note that $\mathcal{K}_Y$ is independent of $\alpha$. Let $\Delta=(\sigma_i(\omega_j))_{1\leq i,j\leq d}$; this is an invertible matrix. Observe that the system $F_k(x)=\mu_k$ is equivalent, via the invertible change of variables 
\[
y_{i,r}=\sum_{j=1}^d\sigma_r(\omega_j)x_{i,j}
\]
to the system 
\begin{equation}\label{e:7.5}
    \sum_{i=1}^n\sigma_r(a_i)y_{i,r}^m=\lambda_r(\mu),
\end{equation}
for all $1\leq r\leq d$. For $\mu\in\mathcal{K}_Y$, we have $\lambda_r(\mu)>0$; thus, we can define 
\[
y_{1,r}(\mu)=\left(\frac{\lambda_r(\mu)}{\sigma_r(a_1)} \right)^{1/m}>0, \quad y_{i,r}(\mu)=0, \, \forall \,2\leq i\leq n.
\]

We use $y_{i,r}(\mu)$ to emphasize the dependency on $\mu$. Let $x^\mu\in\R^{nd}$ be the point corresponding to these $y$'s under the inverse change of variables. Then, we have $F_k(x^\mu)=\mu_k$ for all $1\leq k\leq d$. Set 
\[
a =\min_{1\leq r\leq d}\sigma_r(a_1)>0.
\]
Writing $\Delta^{-1}=(d_{i,j})_{1\leq i,j\leq d}$, let 
\[
\|\Delta^{-1}\|_\infty:=\max_{1\leq i\leq d}\sum_{j=1}^d |d_{i,j}|.
\]
From \eqref{e:7.4}, we obtain 
\[
y_{1,r}(\mu)\leq \left(\frac{c_2'}{c_1aY} \right)^{1/m}.
\]
We now define the constant
\[
Y=\max\left\{1, \frac{c_2'(4\|\Delta^{-1}\|_\infty)^m}{c_1a}\right\}.
\]
Then, for every $\mu\in\mathcal{K}_Y$, we have 
\begin{equation}\label{e:7.6}
    \|x^\mu\|_\infty\leq \|\Delta^{-1}\|_\infty \max_{1\leq r\leq d} y_{1,r}(\mu)\leq \frac{1}{4}.
\end{equation}

Take the box $\mathcal{B}_0=[-1/2, 1/2]^{nd}\subset[-1,1]^{nd}$. By \eqref{e:7.6}, all points $x^\mu$ lie in $\operatorname{int} \mathcal{B}_0$. Moreover, since $y_{1,r}(\mu)>0$ for all $r$, the point $x^\mu\notin L_r$ for all $r$, where $L_r$ is as defined in the proof of Proposition \ref{singdim}. Since $V^\ast=\cup_{r=1}^dL_r$, we have 
\[
\mathcal{C}=\{x^\mu\mid \mu\in \mathcal{K}_Y\}\subset \operatorname{int}\mathcal{B}_0\setminus V^\ast.
\]
Since the map $\mu\mapsto x^\mu$ is continuous on the compact set $\mathcal{K}_Y$, the set $\mathcal{C}$ is compact. From Theorem \ref{birchthm} (1), there exists $c_3>0$ such that 
\[
\Psi(\mu)\geq c_3,
\]
for all $\mu\in\mathcal{K}_Y$. This gives a uniform bound on the singular integral. 

Now, for singular series, consider $\bar{\beta}\in\Z^d$. By our assumption, for every $p$ there exists $\tilde{x}\in\Z_p^{nd}$ on $V_F(\bar{\beta})$ at which some $d\times d$ minor of $J_F(\tilde{x})$ has $p$-adic valuation at most $t$. Lemma \ref{lem:6.2} then gives 
\[
\mathfrak{S}_p(\bar{\beta})\geq p^{-d(2t+1)(n-1)},
\]
for all $p$. By Theorem \ref{birchthm} (2), it follows that 
\[
\mathfrak{S}(\bar{\beta})=\prod_{p}\mathfrak{S}_p(\bar{\beta})\geq \frac{1}{2}\prod_{p\leq p_0}p^{-d(2t+1)(n-1)}=:c_4>0.
\]

Note that $\bar{\beta}=P^m\mu$ with $\mu\in\mathcal{K}_Y$. Applying Theorem \ref{birchthm} to forms $F_k$, the box $\mathcal{B}_0$, and $\nu=\bar{\beta}$ yields, for all sufficiently large $P$,
\[
M(P;\bar{\beta})=\mathfrak{S}(\bar{\beta})\Psi(\mu)P^{d(n-m)}+O(P^{d(n-m)-\delta}),
\]
where $\delta>0$. Using bounds on the singular integral and the singular series obtained above, we have 
\[
M(P;\bar{\beta})\geq c_3c_4P^{d(n-m)}-c_0P^{d(n-m)-\delta}.
\]
From \eqref{e:7.2} and \eqref{e:7.3}, we have $P^m\geq Y\beta_{max}\geq Yc_1\N(\alpha)^{1/d}$. Thus, $P\rightarrow \infty$ as $\N(\alpha)\rightarrow\infty$. Choose $C>0$ such that $\N(\alpha)>C$ forces 
\[
c_3c_4P^{d(n-m)}-c_0P^{d(n-m)-\delta}>0;
\]
then $M(P;\bar{\beta})>0$. Consequently, there exists $x=(x_{1,1},\dots,x_{n,d})\in\Z^{nd}$ such that $F_k(x)=\beta_k$ for all $1\leq k\leq d$. 

For $1\leq i\leq n$, define $x_i=\sum_{j=1}^dx_{i,j}\omega_j\in\co_K$. By Lemma \ref{lem:equivalentsystem}, $Q(x_1,\dots,x_n)=\beta=\alpha\varepsilon^m$. Finally, set $y_i=\varepsilon^{-1}x_i$, then $Q(y_1,\dots,y_n)=\varepsilon^{-m}\beta=\alpha$. Thus, $\alpha\in\co_K^+$ with $\N(\alpha)>C$ is represented by $Q$, completing the proof.
\end{proof}

It is straightforward to see that Theorem \ref{asymlocalglobal} remains valid under the assumption of Proposition \ref{pro:6.3}. We record this as a corollary for later use.

\begin{corollary}\label{cor:alg}
     Let $m\geq 2$ be an even integer, $K$ be a totally real number field of degree $d$, and $Q=\sum_{i=1}^na_ix_i^m$ be a totally positive definite diagonal $m$-ic form in $n$ variables over $K$ with $n>d(d+1)(m-1)2^{m-1}$.
    
    Assume that there exists an integer $t\geq 0$ such that for every $\alpha\in\co_K^+$ there exists $x\in\co_\mathfrak{p}^n$ with $Q(x)=\alpha$ and 
    \[
    \min_{1\leq i\leq n}v_\mathfrak{p}(ma_ix_i^{m-1})\leq t,
    \]
    for every prime ideal $\mathfrak{p}\subset\co_K$.
    
     Then, there exists a constant $C>0$ such that every $\alpha\in \co_K^+$ with $\N(\alpha)>C$ is represented by $Q$.
\end{corollary}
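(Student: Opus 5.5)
The plan is to deduce Corollary \ref{cor:alg} from Theorem \ref{asymlocalglobal}. Concretely, we need a single integer $t'$, depending only on $m,K,Q,t$, such that for every $\alpha\in\co_K^+$ and every rational prime $p$ there is a point $\tilde{x}\in\Z_p^{nd}$ on $V_F(\alpha)$ at which some $d\times d$ minor of $J_F(\tilde{x})$ has $p$-adic valuation at most $t'$. Once $t'$ is found, Theorem \ref{asymlocalglobal} applies with $t'$ in place of $t$ and gives the constant $C$. The bridge between the two hypotheses is Proposition \ref{pro:6.3}.

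Fix $\alpha\in\co_K^+$ and a rational prime $p$. For each $\mathfrak{p}\mid p$, the hypothesis gives $x^\mathfrak{p}\in\co_\mathfrak{p}^n$ with $Q(x^\mathfrak{p})=\alpha$ and $\min_i v_\mathfrak{p}(ma_i(x_i^\mathfrak{p})^{m-1})\le t$. Feeding these local solutions into Proposition \ref{pro:6.3} yields $\tilde{x}\in\Z_p^{nd}$ on $V_F(\alpha)$ and a $d\times d$ minor of $J_F(\tilde{x})$ with valuation exactly $T=\sum_{\mathfrak{p}\mid p} f(\mathfrak{p}\mid p)\,s_\mathfrak{p}$. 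Here $s_\mathfrak{p}$ is the minimal valuation measured as in that proposition. The only point needing care is that the proposition's $s_\mathfrak{p}$ is stated with $v_p$, while our hypothesis controls $v_\mathfrak{p}$. In its proof, however, $s_\mathfrak{p}$ is really the exponent in $I_\mathfrak{p}=\mathfrak{p}^{s_\mathfrak{p}}\co_\mathfrak{p}$, which is the $\mathfrak{p}$-adic valuation. So I would read (or restate) Proposition \ref{pro:6.3} with $s_\mathfrak{p}=\min_i v_\mathfrak{p}(ma_i(x_i^\mathfrak{p})^{m-1})$, which is what the index computation $[\co_\mathfrak{p}:\mathfrak{p}^{s_\mathfrak{p}}]=p^{f s_\mathfrak{p}}$ actually uses. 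With that reading, $s_\mathfrak{p}\le t$ for every $\mathfrak{p}\mid p$.

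It remains to bound $T$ uniformly. Since $\sum_{\mathfrak{p}\mid p} f(\mathfrak{p}\mid p)\le\sum_{\mathfrak{p}\mid p}e(\mathfrak{p}\mid p)f(\mathfrak{p}\mid p)=d$, we get $T\le dt$. Taking $t'=dt$, which depends only on $d$ and $t$, the hypothesis of Theorem \ref{asymlocalglobal} holds for every $\alpha$ and $p$. The rank condition $n>d(d+1)(m-1)2^{m-1}$ is the same in both statements, so Theorem \ref{asymlocalglobal} produces $C$ depending on $K,m,Q,t$.

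I expect no real obstacle here. The one delicate step is the $v_p$ versus $v_\mathfrak{p}$ bookkeeping in Proposition \ref{pro:6.3}. Even if one insists on literal $v_p$ (normalized so that $v_p(\pi_\mathfrak{p})=1/e$), the minimal $v_p$ value is $s/e\le t$ and the corresponding $\mathfrak{p}$-exponent is at most $et\le dt$. So a uniform bound $T\le d^2t$ holds in any case, and the argument goes through with $t'=d^2t$ if needed.
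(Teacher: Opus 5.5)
Your proposal is correct and follows the paper's own route: the paper's proof is just ``Proposition \ref{pro:6.3} plus Theorem \ref{asymlocalglobal}'', with a remark that $t$ has to change. You make that change explicit as $t'=dt$ via $\sum_{\mathfrak{p}\mid p} f(\mathfrak{p}\mid p)\le d$, and your reading of $s_\mathfrak{p}$ in Proposition \ref{pro:6.3} as the $\mathfrak{p}$-adic exponent is the right one, since that is what the index computation $[\co_\mathfrak{p}:\mathfrak{p}^{s_\mathfrak{p}}\co_\mathfrak{p}]=p^{f(\mathfrak{p}\mid p)s_\mathfrak{p}}$ uses; the looser fallback bound $d^2t$ in your last paragraph is therefore unnecessary.
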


\begin{proof}
    Follows immediately from Proposition \ref{pro:6.3} and Theorem \ref{asymlocalglobal}. 
\end{proof}

\begin{rem}
 Note that in the above proof, after applying Proposition \ref{pro:6.3}, the constant $t$, and hence the constant $C$, may differ from those in Theorem \ref{asymlocalglobal}. However, for notational simplicity, we use the same notation. 
\end{rem}

With this, we have established the Corollary \ref{cor:main} stated in the introduction. We now proceed to establish the main theorem.

\section{Local universality}\label{S8}

In this section, we will show that the local assumption of Corollary \ref{cor:alg} is satisfied by forms produced by Algorithm \ref{alg:escalation}. In this context, we will first show that a totally positive definite diagonal $m$-ic form with sufficiently many variables (in fact, three variables suffice) represents all of $\co_\mathfrak{p}$ for all but finitely many $\mathfrak{p}$. This extends \cite[92:1b]{om} to diagonal forms of higher degree. To handle the remaining finitely many $\mathfrak{p}$'s, we need to exploit local arithmetic to obtain a uniform bound on the valuation stated in Corollary \ref{cor:alg}.

To establish local representability outside a finite set of primes, we require the following result, which ultimately follows from the work of Weil \cite{weil}. Although Weil's result applies in considerably greater generality than is needed here; specializing his argument to diagonal forms yields precisely the statement below. Since this requires only a routine adaptation of his proof, we omit the details to avoid unnecessary technicalities.

\begin{theorem}[{\cite{weil}}]\label{thm:weil}
    Let $p$ be a rational prime, $\mathbb F_q$ be the finite field of characteristic $p$. Fix an integer $m\geq 2$ and non-zero elements $a_1,a_2,\dots,a_k\in\mathbb F_q^\times$. Then, there exists a constant $T$ depending only on $m,\,k$, such that if $q>T$ and $p\nmid m$, then for every $b\in \mathbb F_q$ the equation 
    \[
    \sum_{i=1}^k a_ix_i^m=b
    \]
    has a non-zero solution $(x_1,\dots,x_k)\in\mathbb F_q^k$. Moreover, when $k \geq 3$, the constant $T$ can be taken to be 
    \[
    T=\left(2(m-1)^3+2(m-1)^2+1 \right)^2.
    \]
\end{theorem}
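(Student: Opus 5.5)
We will prove the statement by counting points on the affine diagonal hypersurface with character sums (Gauss/Jacobi sums), in the style of Weil. We first reduce to $k\ge 3$ and $b\neq 0$. If $b=0$ and $k\ge 2$, it suffices to solve $a_1x_1^m=-a_2x_2^m$ with $x_2=1$, i.e.\ to show that $-a_2/a_1$ is an $m$th power. For $k=1$ this can fail, so for small $k$ the constant $T$ may have to be read as allowing a general argument only for $k\ge 3$, which is the regime used later. For $k\geq 3$ and $b=0$, we set $x_k=0$ and solve $\sum_{i<k}a_ix_i^m=-a_kx_k^m$ with $x_k=1$, which reduces to the case $b\ne0$ with $k-1\ge 2$ variables. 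It is therefore enough to show that $N(b)=\#\{x\in\mathbb F_q^k:\sum a_ix_i^m=b\}>0$ for $b\neq 0$ (any solution is then automatically non-zero), together with the analogous count in $2$ variables.

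We write $e=\gcd(m,q-1)$; since $x\mapsto x^m$ and $x\mapsto x^e$ have the same image with the same fibre sizes, we may replace $m$ by $e\mid q-1$. The hypothesis $p\nmid m$ is used only to guarantee that $e\ge1$ behaves well; in fact only $e\le m$ matters. The number of solutions of $a y^e=c$ is $\sum_{\chi^e=1}\chi(c/a)$, summing over multiplicative characters of order dividing $e$. Expanding $N(b)$ gives the main term $q^{k-1}$ (all $\chi_i$ trivial), plus terms that are Jacobi sums $J(\chi_1,\dots,\chi_k)$ weighted by $\bar\chi_i(a_i)$ and $\chi_1\cdots\chi_k(b)$. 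By the classical evaluation, each nontrivial Jacobi sum has absolute value $q^{(k-1)/2}$, or $q^{(k-2)/2}$ when the product character is trivial; tuples in which some but not all $\chi_i$ are trivial contribute zero.

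Hence
\[
|N(b)-q^{k-1}|\le (e-1)^k q^{(k-1)/2}.
\]
Positivity follows once $q^{(k-1)/2}>(m-1)^k$, which is the content of $q>T$ with $T$ depending only on $m,k$. For $k\ge3$ we can instead reduce to three variables, setting the rest to zero, and get the explicit requirement $q>(m-1)^3$. This is comfortably below the stated $T=(2(m-1)^3+2(m-1)^2+1)^2$, so the stated $T$ suffices; for $b=0$ the two-variable version gives $q>(m-1)^2\cdot$const, which is also below $T$.

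The main obstacle is bookkeeping in the Jacobi sum identity: correctly isolating the tuples that contribute zero or have smaller absolute value, and the $b=0$ case, where the main term changes to $q^{k-1}$ plus a term involving $q^{k/2}$-size sums. I would handle $b=0$ by the reduction above rather than by a direct count. I would rely on the standard facts $|J|=q^{(k-1)/2}$ and $|g(\chi)|=\sqrt q$, citing them rather than reproving them.
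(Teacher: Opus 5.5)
Your Gauss/Jacobi-sum count is correct in substance. It is essentially the argument the paper relies on: the paper gives no proof, only attributing the statement to Weil, whose treatment of diagonal equations is exactly this character-sum computation. A few points still need fixing.

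First, the general clause of the statement is false for every $k\le 2$, not only for $k=1$. For $k=2$ and $b=0$, a nonzero solution forces $x,y\neq 0$ and $(x/y)^m=-a_2/a_1$. This fails for arbitrarily large $q$, e.g.\ $x^2+y^2=0$ over $\mathbb F_q$ with $q\equiv 3\pmod 4$. So your step ``it suffices to show that $-a_2/a_1$ is an $m$th power'' cannot be completed and should be recorded as a counterexample. The statement should be read as $k\ge 3$ (or $k\ge 2$ with $b\neq 0$), which is all that Proposition \ref{prop:8.2} uses.

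Second, in the $b=0$ reduction you mean $x_k=1$, not $x_k=0$. The resulting two-variable count gives $|N_2(c)-q|\le (e-1)^2\sqrt q$ for $c\neq 0$. Positivity therefore needs $\sqrt q>(m-1)^2$, i.e.\ $q>(m-1)^4$, not ``$(m-1)^2\cdot$const''. The conclusion survives because $T>4(m-1)^6\ge (m-1)^4$. In fact, for $k\ge 3$ your argument gives the sharper threshold $q>(m-1)^4$ for all $b$.

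Finally, the hypothesis $p\nmid m$ plays no role in the count. Passing to $e=\gcd(m,q-1)$, which is automatically prime to $p$, already handles $p\mid m$. The hypothesis is only needed later, for the Hensel lifting step in Proposition \ref{prop:8.2}.
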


Using this, we can now establish local universality outside a finite set of primes. 

\begin{proposition}\label{prop:8.2}
    Let $m\geq 2$ be an even integer, $K$ be a totally real number field, and $Q=\sum_{i=1}^n a_ix_i^m$ be a totally positive definite diagonal $m$-ic form over $K$. Assume $n\geq 3$. Then, for every $\alpha\in\co_K^+$ the following hold: 
    \begin{enumerate}
        \item There exists a finite set of prime ideals $S$ depending only on $m,a_1,a_2,a_3$, such that the equation $Q(x)=\alpha$ has a solution in $\co_{\mathfrak p}^n$ for every prime ideal $\mathfrak p\notin S$.

        \item The solution in (1) can be chosen such that 
        \[
        \min_{1\leq i\leq n} v_\mathfrak{p}(m a_ix_i^{m-1})=0,
        \]
        for all $\mathfrak{p}\notin S$.
    \end{enumerate}
\end{proposition}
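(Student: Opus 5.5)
We take $S$ to be the finite set of prime ideals $\mathfrak p$ with $\mathfrak p\mid m$, or $\mathfrak p\mid a_1a_2a_3$, or $\N(\mathfrak p)\le T$, where $T=\left(2(m-1)^3+2(m-1)^2+1\right)^2$ is the constant of Theorem \ref{thm:weil} for $k=3$. Only finitely many prime ideals have bounded norm, and only finitely many divide the fixed nonzero element $ma_1a_2a_3$. So $S$ is finite and depends only on $m,a_1,a_2,a_3$.

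Now fix $\alpha\in\co_K^+$ and $\mathfrak p\notin S$, and write $k=\co_\mathfrak p/\mathfrak p=\mathbb F_q$ with $q=\N(\mathfrak p)>T$ and $p\nmid m$. The reductions $\bar a_1,\bar a_2,\bar a_3$ lie in $\mathbb F_q^\times$ because $\mathfrak p\nmid a_1a_2a_3$. By Theorem \ref{thm:weil} with $b=\bar\alpha$, there is a \emph{nonzero} $(\bar x_1,\bar x_2,\bar x_3)\in\mathbb F_q^3$ with $\sum_{i\le 3}\bar a_i\bar x_i^m=\bar\alpha$. Choose $j$ with $\bar x_j\ne0$.

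Next we lift this solution by Hensel's lemma (Lemma \ref{genhensel} with $d=1$). Fix lifts $x_i\in\co_\mathfrak p$ of $\bar x_i$ for $i\ne j$, set $x_i=0$ for $i>3$, and consider the one-variable polynomial
\[
f(X)=a_jX^m+\sum_{i\ne j}a_ix_i^m-\alpha .
\]
Any lift $x_j^{(0)}$ of $\bar x_j$ satisfies $v_\mathfrak p(f(x_j^{(0)}))\ge1$. Moreover, $f'(x_j^{(0)})=ma_j(x_j^{(0)})^{m-1}$ is a $\mathfrak p$-adic unit, since $\mathfrak p\nmid m$, $\mathfrak p\nmid a_j$ and $\bar x_j\ne0$. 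Hence $\tau=0$ and the condition $v_\mathfrak p(f)\ge 2\tau+1=1$ holds. Lemma \ref{genhensel} therefore gives $x_j\in\co_\mathfrak p$ with $f(x_j)=0$ and $x_j\equiv x_j^{(0)}\pmod{\mathfrak p}$. This yields $x\in\co_\mathfrak p^n$ with $Q(x)=\alpha$, proving (1).

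For (2), note that $x_j$ is still a unit, so $v_\mathfrak p(ma_jx_j^{m-1})=0$ and the minimum over $i$ is $0$. The only real input is Weil's theorem. The one point needing care is that Theorem \ref{thm:weil} must provide a nonzero solution for every $b$, including $b=0$ (the case $\mathfrak p\mid\alpha$). A nonzero solution is exactly what makes the derivative a unit, which is why a zero solution would not suffice here.
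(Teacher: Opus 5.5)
Your proof is correct and follows the paper's argument essentially step for step: the same exceptional set $S$ (primes dividing $ma_1a_2a_3$ or of norm at most $T$), a nonzero solution modulo $\mathfrak p$ from Theorem \ref{thm:weil} with $k=3$, and a Hensel lift in a coordinate $x_j$ with $\bar x_j\neq 0$, so that $ma_jx_j^{m-1}$ is a unit. Your version is slightly more explicit than the paper's "lifts uniquely": you fix lifts of the other coordinates and then apply the one-variable case of Lemma \ref{genhensel}.
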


\begin{proof}
    It suffices to prove the result for $n=3$, since any additional variables can simply be set equal to $0$.
    
    Define $S$ to be the set of prime ideals $\mathfrak{p}\subset \co_K$ satisfying at least one of the following:
    \begin{itemize}
        \item $\mathfrak{p}\mid m$,
        \item $\mathfrak{p}\mid a_1a_2a_3$,
        \item the norm $q=\N(\mathfrak{p})=|\co_K/\mathfrak{p}|\leq T=\left(2(m-1)^3+2(m-1)^2+1 \right)^2$.
    \end{itemize}
    It is clear that $S$ is a finite set. It depends on $m,a_1,a_2,a_3$, but not on later coefficients of $Q$. Take $\mathfrak{p}\notin S$ and set $\mathbb F_q=\co_K/\mathfrak{p}$. Then, it follows that 
    \begin{itemize}
        \item $q=\N(\mathfrak{p})>T$,
        \item $p=\operatorname{char}\mathbb F_q$ does not divide $m$,
        \item the residues of $a_
        1,a_2,a_3$ in $\mathbb F_q$ are non-zero.
    \end{itemize}

    Let $\alpha\in\co_K^+$ be arbitrary and $b$ be its reduction modulo $\mathfrak{p}$. By Theorem \ref{thm:weil}, applied with $k=3$, the congruence 
    \[
    a_1x_1^m+a_2x_2^m+a_3x_3^m\equiv b\pmod {\mathfrak{p}}
    \]
    possesses a non-zero solution $(\bar{x_1},\bar{x_2},\bar{x_3})\in\mathbb{F}_q^3$. Choose an index $1\leq j\leq 3$ with $\bar{x_j}\neq 0$. Since $\mathfrak{p}\nmid m$ and $\mathfrak{p}\nmid a_j$,
    \[
    \frac{\partial}{\partial x_j}\left(\sum_{i=1}^3a_ix_i^m \right)=ma_jx_j^{m-1}
    \]
    evaluated at the solution is non-zero modulo $\mathfrak{p}$; hence it is a unit in $\mathbb F_q$. Now, consider the polynomial 
    \[
    F(x_1,x_2,x_3)=\sum_{i=1}^3a_ix_i^m-\alpha\in\co_K[x_1,x_2,x_3].
    \]
    We have already found a solution modulo $\mathfrak{p}$ that is non-singular for the variable $x_j$, i.e., 
    \[
    F(\bar{x}_1,\bar{x}_2,\bar{x}_3)\equiv0\pmod{\mathfrak{p}}, \quad \frac{\partial  F}{\partial x_j}(\bar{x}_1,\bar{x}_2,\bar{x}_3)\not\equiv 0\pmod{\mathfrak{p}}.
    \]
    By Hensel's lemma, the non-singular solution lifts uniquely to a solution $x=(x_1,x_2,x_3)\in\co_\mathfrak{p}^3$ of $F(x)=0$ establishing (1).

    Since lifting preserves the residue classes, so 
    \[
    \frac{\partial F}{\partial x_j}(x_1,x_2,x_3)\equiv ma_jx_j^{m-1}\pmod {\mathfrak{p}\co_{\mathfrak{p}}},
    \]
    which is a unit in $\co_\mathfrak{p}$. Consequently, the minimum 
    \[
    \min_{1\leq i\leq n} v_\mathfrak{p}(m a_ix_i^{m-1})=0,
    \] giving (2).
\end{proof}

\begin{lemma}\label{lem:8.3}
    Let $m\geq 2$ be an even integer, $K$ a totally real number field, and $\mathfrak{p}\subset\co
    _K$ be a prime ideal. Set 
    \[
    e_\mathfrak{p}=v_\mathfrak{p}(m), \quad k_\mathfrak{p}=2e_\mathfrak{p}+1.
    \]
    Then, the following hold: 
    \begin{enumerate}
        \item We have, $1+\mathfrak{p}^{k_\mathfrak{p}}\co_\mathfrak{p}\subseteq \co_\mathfrak{p}^{\times m}$.
        \item The group $K_\mathfrak{p}^\times/K_\mathfrak{p}^{\times m}$ is finite; moreover,
        \[
        |K_\mathfrak{p}^\times/K_\mathfrak{p}^{\times m}|\leq m |(\co_\mathfrak{p}/\mathfrak{p}^{k_\mathfrak{p}})^\times|.
        \]
        \item The valuation $v_\mathfrak{p}$ induces a well-defined map $\bar{v}_\mathfrak{p}:K_\mathfrak{p}^\times/K_\mathfrak{p}^{\times m}\rightarrow \Z/m\Z$. Let 
        \[
        B_\mathfrak{p}=\{[g]\in K_\mathfrak{p}^\times/K_\mathfrak{p}^{\times m}\mid [g]\cap (\co_\mathfrak{p}\setminus\{0\})\neq \emptyset\}.
        \]
        For every $[g]\in B_\mathfrak{p}$, the minimum
        \[
        r_g=\min\{v_\mathfrak{p}(\alpha)\mid \alpha\in [g]\cap \co_\mathfrak{p}\}
        \]
        exists, is attained, and satisfies $0\leq r_g\leq m-1$.
        
    \end{enumerate}
\end{lemma}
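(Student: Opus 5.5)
For (1), I will apply the generalized Hensel lemma (Lemma \ref{genhensel}) in one variable to $f(x)=x^m-u$, where $u\in 1+\mathfrak{p}^{k_\mathfrak{p}}\co_\mathfrak{p}$. Take the approximate root $a=1$. Then $f(1)=1-u$ has valuation at least $k_\mathfrak{p}=2e_\mathfrak{p}+1$. The derivative is $f'(1)=m$, so $\tau=v_\mathfrak{p}(m)=e_\mathfrak{p}$, and $v_\mathfrak{p}(m)$ is finite because $m\neq 0$. The hypothesis $v_\mathfrak{p}(f(1))\geq 2\tau+1$ therefore holds, and Lemma \ref{genhensel} produces $b\in\co_\mathfrak{p}$ with $b^m=u$ and $b\equiv 1 \pmod{\mathfrak{p}^{\tau+1}}$. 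In particular $b$ is a unit, so $u\in\co_\mathfrak{p}^{\times m}$.

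For (2), I will use the decomposition $K_\mathfrak{p}^\times=\pi_\mathfrak{p}^{\Z}\times\co_\mathfrak{p}^\times$. Its $m$th powers are $\pi_\mathfrak{p}^{m\Z}\times\co_\mathfrak{p}^{\times m}$, so
\[
K_\mathfrak{p}^\times/K_\mathfrak{p}^{\times m}\cong \Z/m\Z\times \co_\mathfrak{p}^\times/\co_\mathfrak{p}^{\times m}.
\]
By (1), $\co_\mathfrak{p}^{\times m}$ contains $1+\mathfrak{p}^{k_\mathfrak{p}}\co_\mathfrak{p}$. Hence $\co_\mathfrak{p}^\times/\co_\mathfrak{p}^{\times m}$ is a quotient of $\co_\mathfrak{p}^\times/(1+\mathfrak{p}^{k_\mathfrak{p}}\co_\mathfrak{p})\cong(\co_\mathfrak{p}/\mathfrak{p}^{k_\mathfrak{p}})^\times$. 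That group is finite since the residue field is finite. Multiplying the two orders gives the stated bound $m\,|(\co_\mathfrak{p}/\mathfrak{p}^{k_\mathfrak{p}})^\times|$.

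For (3), the map $\bar v_\mathfrak{p}$ is well defined because $v_\mathfrak{p}(g h^m)=v_\mathfrak{p}(g)+m\,v_\mathfrak{p}(h)$, so the valuation is constant modulo $m$ on each class. Now fix $[g]\in B_\mathfrak{p}$. The set $\{v_\mathfrak{p}(\alpha)\mid \alpha\in[g]\cap\co_\mathfrak{p}\}$ is a nonempty set of nonnegative integers, since $0\notin[g]$. By well-ordering its minimum $r_g$ exists and is attained, and $r_g\geq 0$ is automatic.

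For the upper bound, take any $\alpha\in[g]\cap\co_\mathfrak{p}$ with $v_\mathfrak{p}(\alpha)=r$. Write $r=mq+s$ with $0\le s\le m-1$. Then $\alpha\pi_\mathfrak{p}^{-mq}$ lies in the same class, is in $\co_\mathfrak{p}$, and has valuation $s\le m-1$. Hence $r_g\le m-1$; in fact $r_g$ is the least nonnegative representative of $\bar v_\mathfrak{p}([g])$.

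The only mildly delicate step is the Hensel application in (1): I must check that $\tau$ is finite and that the threshold $2\tau+1$ exactly matches $k_\mathfrak{p}$. Everything else is formal.
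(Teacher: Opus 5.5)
Your proposal is correct and follows the paper's proof essentially step by step. You use Hensel's lemma for $x^m-u$ at $x=1$ with $\tau=v_\mathfrak{p}(m)$ in (1). In (2) you split $K_\mathfrak{p}^\times/K_\mathfrak{p}^{\times m}$ into $\Z/m\Z$ and $\co_\mathfrak{p}^\times/\co_\mathfrak{p}^{\times m}$; you record an isomorphism where the paper only uses a surjection, but this gives the same bound. In (3) you reduce a representative by $\pi_\mathfrak{p}^{-mq}$ to reach valuation at most $m-1$.
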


\begin{rem}
    We use the notation $[g]$ to denote the class in the field, distinguishing it from the class in the ring.
\end{rem}

\begin{proof}
    Let $a\in 1+\mathfrak{p}^{k_\mathfrak{p}}\co_\mathfrak{p}$, and consider the polynomial 
    \[
    f(x)=x^m-a\in\co_\mathfrak{p}[x].
    \]
    Then, $f(1)=1-a\in \mathfrak{p}^{k_\mathfrak{p}}\co_\mathfrak{p}$, so $v_\mathfrak{p}(f(1))\geq k_\mathfrak{p}$. The derivative $f'(x)=mx^{m-1}$ satisfies $f'(1)=m$, so $v_\mathfrak{p}(f'(1))=v_\mathfrak{p}(m)=e_\mathfrak{p}$. Thus, we have 
    \[
    v_\mathfrak{p}(f(1))\geq 2e_\mathfrak{p}+1>2e_\mathfrak{p}=2v_\mathfrak{p}(f'(1)).
    \]
    Therefore, by Hensel's lemma, there exists $b\in\co_\mathfrak{p}$ such that $f(b)=0$ and 
    \[
    v_\mathfrak{p}(b-1)\geq v_\mathfrak{p}(f(1))-v_\mathfrak{p}(f'(1))\geq (2e_\mathfrak{p}+1)-e_\mathfrak{p}=e_\mathfrak{p}+1.
    \]
    In particular, $b\equiv 1\pmod{\mathfrak{p}^{e_\mathfrak{p}+1}}$, so $b\in \co_\mathfrak{p}^\times$. Therefore, $a=b^m\in\co_\mathfrak{p}^{\times m}$ and hence $1+\mathfrak{p}^{k_\mathfrak{p}}\co_\mathfrak{p}\subseteq \co_\mathfrak{p}^{\times m}$, establishing (1). 

    \medskip

    Consider the endomorphism
    \[
    \co_\mathfrak{p}^\times\rightarrow \co_\mathfrak{p}^\times: x\mapsto x^m.
    \]
    By (1), the image of this automorphism contains the subgroup $1+\mathfrak{p}^{k_\mathfrak{p}}\co_\mathfrak{p}$. Thus, we have a surjection 
    \[
    \co_\mathfrak{p}^\times/1+\mathfrak{p}^{k_\mathfrak{p}}\co_\mathfrak{p} \rightarrow \co_\mathfrak{p}^\times/\co_\mathfrak{p}^{\times m}.
    \]
    Note that $\co_\mathfrak{p}/\mathfrak{p}^{k_\mathfrak{p}}$ is finite, therefore the group $(\co_\mathfrak{p}/\mathfrak{p}^{k_\mathfrak{p}})^\times$ is also finite, and the canonical map $\co_\mathfrak{p}^\times\rightarrow (\co_\mathfrak{p}/\mathfrak{p}^{k_\mathfrak{p}})^\times$ is surjective with kernel $1+\mathfrak{p}^{k_\mathfrak{p}}\co_\mathfrak{p}$. Then, by the first isomorphism theorem, it follows that $\co_\mathfrak{p}^\times/1+\mathfrak{p}^{k_\mathfrak{p}}\co_\mathfrak{p}$ is finite, so $\co_\mathfrak{p}^\times/\co_\mathfrak{p}^{\times m}$ is also finite.

    Since $K_\mathfrak{p}$ is the field of fractions of the discrete valuation ring $\co_\mathfrak{p}$, every element of $K_\mathfrak{p}^\times$ can be uniquely written as $\pi_\mathfrak{p}^nu$ with $n\in\Z$ and $u\in\co_\mathfrak{p}^\times$, where $\pi_\mathfrak{p}$ is a uniformizer of $\co_\mathfrak{p}$. Since $(\pi_\mathfrak{p}^n)^m\in K_\mathfrak{p}^{\times m}$, every coset in $K_\mathfrak{p}^\times/K_\mathfrak{p}^{\times m}$ contains a representative of the form $\pi_\mathfrak{p}^nu$, where $0\leq n\leq m-1$. Thus, the map 
    \[
    \{0,1,\dots,m-1\}\times \co_\mathfrak{p}^\times\rightarrow K_\mathfrak{p}^\times/K_\mathfrak{p}^{\times m}: (n,u)\mapsto \pi_\mathfrak{p}^nu\cdot K_\mathfrak{p}^{\times m}
    \]
    is surjective. Observe that if $u_1=u_2v^m$, where $u_1,u_2,v\in\co_\mathfrak{p}^\times$, then $\pi_\mathfrak{p}^nu_1=\pi_\mathfrak{p}^nu_2v^m$. Thus, the above surjection induces a well-defined surjection 
    \[
    \{0,1,\dots,m-1\}\times \co_\mathfrak{p}^\times/ \co_\mathfrak{p}^{\times m}\rightarrow K_\mathfrak{p}^\times/K_\mathfrak{p}^{\times m}.
    \]
    Since $\co_\mathfrak{p}^\times/ \co_\mathfrak{p}^{\times m}$ is finite, it follows that $K_\mathfrak{p}^\times/K_\mathfrak{p}^{\times m}$ is finite. Moreover, 
    \[
    |K_\mathfrak{p}^\times/K_\mathfrak{p}^{\times m}|\leq m |\co_\mathfrak{p}^\times/ \co_\mathfrak{p}^{\times m}|\leq m | \co_\mathfrak{p}^\times/1+\mathfrak{p}^{k_\mathfrak{p}}\co_\mathfrak{p}|=m |(\co_\mathfrak{p}/\mathfrak{p}^{k_\mathfrak{p}})^\times|.
    \]
    This proves (2).

    \medskip

    For (3), let us first show that $\bar{v}_\mathfrak{p}$ is well-defined. Suppose $\beta_1,\beta_2$ lie in the same class; then $\beta_1=\beta_2\gamma^m$ for some $\gamma\in K_\mathfrak{p}^\times$. Thus, 
    \[
    v_\mathfrak{p}(\beta_1)=v_\mathfrak{p}(\beta_2)+mv_\mathfrak{p}(\gamma)\equiv v_\mathfrak{p}(\beta_2)\pmod m.
    \]
    Therefore, $\bar{v}_\mathfrak{p}$ is well-defined. Let $[g]\in B_\mathfrak{p}$. By the definition of $B_\mathfrak{p}$, there exists a $0\neq \alpha\in [g]\cap \co_\mathfrak{p}$. Write $v_\mathfrak{p}(\alpha)=qm+r$, $0\leq r\leq m-1$. Consider the element $\tilde{\alpha}=\alpha \pi_\mathfrak{p}^{-qm}=\alpha (\pi_\mathfrak{p}^{-q})^{m}$. Then, we see that $\tilde{\alpha}\in [g]$, and $v_\mathfrak{p}(\tilde{\alpha})=v_\mathfrak{p}(\alpha)-qm=r\geq 0$, so $\tilde{\alpha}\in[g]\cap\co_\mathfrak{p}$. Thus, the set $\{v_\mathfrak{p}(\alpha)\mid \alpha\in [g]\cap \co_\mathfrak{p}\}$ is a non-empty subset of $\Z_{\geq 0}$ whose elements modulo $m$ are fixed, in particular $v_\mathfrak{p}(\alpha)\equiv v_\mathfrak{p}(\tilde{\alpha})\pmod m$ because they both lie in the same class $[g]$. Therefore, the minimum $r_g$ exists, is attained by $\tilde{\alpha}$, and $0\leq r_g\leq m-1$. This completes the proof.
\end{proof}

Lemma \ref{lem:8.3} gives an element in $[g]\cap \co_\mathfrak{p}$ whose valuation is $r_g$. We now show that, in fact, there exists a totally positive (global) integer whose valuation is $r_g$.

\begin{lemma}\label{lem:8.4}
    Let $K$ be a totally real number field of degree $d$, $\mathfrak{p}\subset\co_K$ be a prime ideal, and $[g]\in B_\mathfrak{p}$, where $B_\mathfrak{p}$ is as defined in Lemma \ref{lem:8.3}. Then, there exists an element $\eta_g\in\co_K^+$ such that $\eta_g\in[g]$ and $v_\mathfrak{p}(\eta_g)=r_g$, where $r_g$ is from Lemma \ref{lem:8.3}.
\end{lemma}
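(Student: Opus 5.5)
Start from Lemma \ref{lem:8.3}(3): choose $\tilde\alpha\in[g]\cap\co_\mathfrak{p}$ with $v_\mathfrak{p}(\tilde\alpha)=r_g$. By Lemma \ref{lem:8.3}(1), the class $[g]$ is stable under multiplication by $1+\mathfrak{p}^{k_\mathfrak{p}}\co_\mathfrak{p}$. So it suffices to find a global element $\eta_g\in\co_K^+$ with
\[
\eta_g\equiv\tilde\alpha \pmod{\mathfrak{p}^{r_g+k_\mathfrak{p}}\co_\mathfrak{p}}.
\]
Indeed, such a congruence gives $\eta_g=\tilde\alpha(1+\tilde\alpha^{-1}(\eta_g-\tilde\alpha))$. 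Here $v_\mathfrak{p}(\tilde\alpha^{-1}(\eta_g-\tilde\alpha))\ge k_\mathfrak{p}$, so the second factor is an $m$th power and $\eta_g\in[g]$. Also $k_\mathfrak{p}\ge1$, so $v_\mathfrak{p}(\eta_g)=v_\mathfrak{p}(\tilde\alpha)=r_g$ by the strict triangle inequality.

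To produce $\eta_g$, set $N=r_g+k_\mathfrak{p}$ and use Lemma \ref{approximation} to get $\beta\in\co_K$ with $\beta\equiv\tilde\alpha\pmod{\mathfrak{p}^N\co_\mathfrak{p}}$. This $\beta$ may fail to be totally positive. To fix signs, I would add a large multiple of a totally positive element of $\mathfrak{p}^N$. The natural choice is $M\in\Z$ with $M\in\mathfrak{p}^N$, for instance $M=p^N$ where $p$ lies below $\mathfrak{p}$, since $\mathfrak{p}^N\mid p^N\co_K$. Then $\eta_g=\beta+tM$ for a positive integer $t$ large enough that $\sigma_i(\beta)+tM>0$ for all embeddings $\sigma_i$. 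Such a $t$ exists because $M>0$ is rational and only finitely many embeddings are involved. Now $\eta_g\in\co_K^+$ and $\eta_g\equiv\beta\equiv\tilde\alpha\pmod{\mathfrak{p}^N\co_\mathfrak{p}}$.

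The only step needing care is the first paragraph: the error must be relatively small compared with $\tilde\alpha$, not merely $\mathfrak{p}$-adically small. That is why the modulus is $\mathfrak{p}^{r_g+k_\mathfrak{p}}$ rather than $\mathfrak{p}^{k_\mathfrak{p}}$, which guarantees $\tilde\alpha^{-1}(\eta_g-\tilde\alpha)\in\mathfrak{p}^{k_\mathfrak{p}}\co_\mathfrak{p}$. Total positivity is routine once one notes that $\mathfrak{p}^N$ contains positive rational integers.
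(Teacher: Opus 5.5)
Your proposal is correct and follows essentially the same route as the paper. Both approximate $\tilde\alpha$ modulo $\mathfrak{p}^{r_g+k_\mathfrak{p}}\co_\mathfrak{p}$ by a global $\beta$ via Lemma \ref{approximation} and add a large multiple of $p^{r_g+k_\mathfrak{p}}$ to get total positivity. Both then conclude $v_\mathfrak{p}(\eta_g)=r_g$ and $\eta_g\in[g]$ from $\eta_g/\tilde\alpha\in 1+\mathfrak{p}^{k_\mathfrak{p}}\co_\mathfrak{p}\subseteq\co_\mathfrak{p}^{\times m}$, using Lemma \ref{lem:8.3}(1).
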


\begin{proof}
    Let $[g]\in B_\mathfrak{p}$. By Lemma \ref{lem:8.3} (3), there exists a $\alpha\in [g]\cap \co_\mathfrak{p}$ with $v_\mathfrak{p}(\alpha)=r_g$. We first approximate $\alpha$ by an element of $\co_K$ modulo a large power of $\mathfrak{p}$. By Lemma \ref{approximation}, for every $n\geq 1$, there is an isomorphism $\co_K/\mathfrak{p}^n\rightarrow \co_\mathfrak{p}/\mathfrak{p}^n\co_\mathfrak{p}$. Consequently, there exists $\beta\in \co_K$ such that 
    \[
    \beta\equiv\alpha\pmod {\mathfrak{p}^{r_g+k_\mathfrak{p}}\co_\mathfrak{p}},
    \]
    where $k_\mathfrak{p}=2v_\mathfrak{p}(m)+1$ is from Lemma \ref{lem:8.3}.

    Let $p$ be a rational prime below $\mathfrak{p}$, so $p\in \mathfrak{p}$. Then, the rational integer $q=p^{r_g+k_\mathfrak{p}}\in \mathfrak{p}^{r_g+k_\mathfrak{p}}$. In particular, for every $n\in\Z$, $nq\in\mathfrak{p}^{r_g+k_\mathfrak{p}}$. Thus, for all $n\in\Z$, $\eta_g:=\beta+nq$ still satisfies $\eta_g\equiv \beta\equiv\alpha\pmod {\mathfrak{p}^{r_g+k_\mathfrak{p}}\co_\mathfrak{p}}$. Set 
    \[
    c=\max_{1\leq i\leq d} |\sigma_i(\beta)|.
    \]
    Choose $n\in \Z$ so that $nq>c$. Then, for each $1\leq i\leq d$, $\sigma_i(\eta_g)=\sigma_i(\beta)+nq>nq-c>0$, so $\eta_g\in \co_K^+$. Now, we verify that $v_\mathfrak{p}(\eta_g)=r_g$.

    By definition of $\eta_g$, we have $\eta_g-\alpha=(\beta-\alpha)+nq$. By construction, $\eta_g-\alpha, \beta-\alpha+nq\in \mathfrak{p}^{r_g+k_\mathfrak{p}}\co_\mathfrak{p}$. Therefore, $v_\mathfrak{p}(\eta_g-\alpha)\geq r_g+k_\mathfrak{p}$. Consequently, we have $v_\mathfrak{p}(\eta_g-\alpha)\geq r_g+k_\mathfrak{p}>r_g=v_\mathfrak{p}(\alpha)$. Recall that for a non-Archimedean place, if $v(x-y)>v(x)$, then $v(x)=v(y)$. Thus, it follows that $v_\mathfrak{p}(\eta_g)=v_\mathfrak{p}(\alpha)=r_g$. It remains to show $\eta_g\in[g]$. 

    Write 
    \[
    \frac{\eta_g}{\alpha}=1+\frac{\eta_g-\alpha}{\alpha}.
    \]
    Then, 
    \[
    v_\mathfrak{p}\left(\frac{\eta_g-\alpha}{\alpha}\right)=v_\mathfrak{p}(\eta_g-\alpha)-v_\mathfrak{p}(\alpha)\geq (r_g+k_\mathfrak{p})-r_g=k_\mathfrak{p}.
    \]
    Thus, $\eta_g/\alpha\in 1+\mathfrak{p}^{k_\mathfrak{p}}\co_\mathfrak{p}\subseteq\co_\mathfrak{p}^{\times m}\subset K_\mathfrak{p}^{\times m}$ by Lemma \ref{lem:8.3} (1). Therefore, $\eta_g=\beta+nq\in [g]$ for all $n\in\Z$ with $nq>c$ as $\alpha\in [g]$ completing the proof.
\end{proof}

\begin{rem}
    In Lemma \ref{lem:8.4}, we state the existence of a single $\eta_g$, while we prove that there are many such $\eta_g$.  This minor inaccuracy is intentional, as our subsequent proofs only require a single $\eta_g$.
\end{rem}

Next, we prove a stronger version of this lemma, in which we require that these $\eta_g$ lie outside a given finite subset of $\co_K^+/\co_K^{\times m}$.

\begin{lemma}\label{lem:8.4'}
    Let the hypothesis be as in Lemma \ref{lem:8.4}, and $D\subset \co_K^+/\co_K^{\times m}$ be finite. Then, the element $\eta_g\in \co_K^+$ in Lemma \ref{lem:8.4} can be chosen such that the class of $\eta_g$ in $\co_K^+/\co_K^{\times m}$ does not lie in $D$.
\end{lemma}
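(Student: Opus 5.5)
The plan is to exploit the infinite family of valid choices already produced in the proof of Lemma \ref{lem:8.4}. With $\beta\in\co_K$ and $q=p^{r_g+k_\mathfrak{p}}$ as there, every element
\[
\eta(n)=\beta+nq,\qquad n\in\Z,\ nq>c,
\]
is totally positive, lies in $[g]$, and satisfies $v_\mathfrak{p}(\eta(n))=r_g$. It therefore suffices to show that, as $n$ ranges over this infinite set of integers, the classes of $\eta(n)$ in $\co_K^+/\co_K^{\times m}$ cannot all lie in the finite set $D$.

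The key step is a norm comparison. Every class in $\co_K^+/\co_K^{\times m}$ has a well-defined norm, so the finite set $D$ has bounded norm: set $M=\max_{\delta\in D}\N(\delta)$, or $M=0$ if $D=\emptyset$. On the other hand,
\[
\N(\eta(n))=\prod_{i=1}^d\bigl(\sigma_i(\beta)+nq\bigr)\geq (nq-c)^d,
\]
since each factor exceeds $nq-c>0$. This tends to infinity as $n\to\infty$. Choosing $n$ large enough that $(nq-c)^d>M$ gives $\N(\eta(n))>M$, so the class of $\eta(n)$ cannot equal any element of $D$, because norms of equal classes coincide. Taking $\eta_g=\eta(n)$ for such an $n$ then yields all the required properties at once.

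There is no real obstacle here. The only point needing care is to confirm that the argument of Lemma \ref{lem:8.4} gave the properties $\eta(n)\in[g]$, $v_\mathfrak{p}(\eta(n))=r_g$ and total positivity simultaneously for every sufficiently large $n$, not just for one. The remark after Lemma \ref{lem:8.4} says exactly this, and in the proof both $\beta$ and the congruence modulo $\mathfrak{p}^{r_g+k_\mathfrak{p}}$ were independent of $n$.
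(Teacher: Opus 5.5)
Your proposal is correct and is essentially the paper's own proof. Like the paper, you reuse the family $\beta+nq$ from Lemma \ref{lem:8.4}, bound $\N(\beta+nq)\geq (nq-c)^d$, and choose $n$ so that this exceeds every norm occurring in $D$; the argument rests on the norm being well-defined on classes modulo $\co_K^{\times m}$.
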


\begin{proof}
    With the notations and definitions as in the proof of Lemma \ref{lem:8.4}, we have $\eta_g=\beta +nq\in \co_K^+\cap [g]$ for all $n\in \Z$ with $nq>c$, and $v_\mathfrak{p}(\eta_g)=r_g$. For such $n$, we have $\sigma_i(\eta_g)\geq nq-c$ for all $1\leq i\leq d$. Thus, 
    \[
    \N(\eta_g)\geq (nq-c)^d\rightarrow \infty,
    \]
    as $n\rightarrow \infty$. Since elements in a class of $\co_K^+/\co_K^{\times m}$ have same norm, and the set $D$ is finite; so choosing $n$ with $\N(\eta_g)>\max_{\gamma\in D}\N(\gamma)$ forces the class of $\eta_g$ to lie outside $D$. Therefore, taking $\eta_g$ for such $n$ completes the proof of the lemma.
\end{proof}

\begin{definition}\label{def:8.6}
    Let $\mathfrak{p}\subset \co_K$ be a prime ideal. A finite set $W_\mathfrak{p}\subset \co_K^+$ is a \emph{local criterion set at $\mathfrak{p}$} if for every $[g]\in B_\mathfrak{p}$, there is a $\eta_g\in \co_K^+$ with $\eta_g\in [g]$ and $v_\mathfrak{p}(\eta_g)=r_g$, where $B_\mathfrak{p}$ and $r_g$ are as in Lemma \ref{lem:8.3}. We denote the set 
    \begin{equation}\label{wp}
    W_\mathfrak{p}=\{\eta_g\in\co_K^+\mid [g]\in B_\mathfrak{p}\}.
\end{equation}
\end{definition}

\begin{rem}\label{rem1}
    Note that Lemma \ref{lem:8.4'} provides $W_\mathfrak{p}$ not meeting $D$ for any given finite subset $D\subset\co_K^+/\co_K^{\times m}$.
\end{rem}

Note that finiteness of $W_\mathfrak{p}$ follows from Lemma \ref{lem:8.3}.
In the following proposition, we clarify why we are calling $W_\mathfrak{p}$ a local criterion set.

\begin{proposition}\label{prop:8.5}
    Let $m\geq 2$ be an even integer, $K$ be a totally real number field, $\mathfrak{p}\subset\co_K$ a prime ideal, and $Q(x)=\sum_{i=1}^na_ix_i^m$ be a diagonal $m$-ic form over $K$. Assume that $Q$ represents all $W_\mathfrak{p}$ over $K$, i.e., for each $\eta_g\in W_\mathfrak{p}$, there exists a $x\in\co_K^n$ with $Q(x)=\eta_g$. Then, $Q$ represents every element of $\co_\mathfrak{p}$ over $\co_\mathfrak{p}$, i.e., for every $\alpha\in\co_\mathfrak{p}$, there is a $y\in\co_\mathfrak{p}^n$ such that $Q(y)=\alpha$.
\end{proposition}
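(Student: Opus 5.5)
Given $\alpha\in\co_\mathfrak{p}$, the case $\alpha=0$ is handled by $y=0$. For $\alpha\neq 0$, let $[g]$ be the class of $\alpha$ in $K_\mathfrak{p}^\times/K_\mathfrak{p}^{\times m}$. Since $\alpha\in[g]\cap(\co_\mathfrak{p}\setminus\{0\})$, we have $[g]\in B_\mathfrak{p}$. By Definition \ref{def:8.6}, $W_\mathfrak{p}$ contains an element $\eta_g\in\co_K^+$ with $\eta_g\in[g]$ and $v_\mathfrak{p}(\eta_g)=r_g$. By hypothesis there is $x\in\co_K^n$ with $Q(x)=\eta_g$. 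The plan is to rescale this global representation of $\eta_g$ into a local representation of $\alpha$ using an $m$th power from $K_\mathfrak{p}$.

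Since $\alpha$ and $\eta_g$ lie in the same class, write $\alpha=\eta_g\gamma^m$ with $\gamma\in K_\mathfrak{p}^\times$. Then
\[
mv_\mathfrak{p}(\gamma)=v_\mathfrak{p}(\alpha)-v_\mathfrak{p}(\eta_g)=v_\mathfrak{p}(\alpha)-r_g .
\]
The key point is that $v_\mathfrak{p}(\alpha)\geq r_g$. This holds because $\alpha\in[g]\cap\co_\mathfrak{p}$ and $r_g$ is, by Lemma \ref{lem:8.3}(3), the minimum of $v_\mathfrak{p}$ over exactly this set. Hence $v_\mathfrak{p}(\gamma)\geq 0$, that is, $\gamma\in\co_\mathfrak{p}$.

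Now put $y=\gamma\,\iota_\mathfrak{p}(x)\in\co_\mathfrak{p}^n$, embedding $x$ via $\co_K\hookrightarrow\co_\mathfrak{p}$. By homogeneity,
\[
Q(y)=\gamma^mQ(x)=\gamma^m\eta_g=\alpha .
\]

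The only delicate point is the integrality of $\gamma$, which rests on choosing $\eta_g$ with the \emph{minimal} valuation $r_g$ in its class. That is exactly the content of Lemma \ref{lem:8.3}(3) and Lemma \ref{lem:8.4}, so no genuine obstacle remains; the rest is bookkeeping. Note in particular that neither positivity nor the global nature of $\eta_g$ is used beyond providing $x\in\co_K^n$.
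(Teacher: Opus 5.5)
Your proof is correct and follows essentially the same route as the paper: take $\eta_g\in W_\mathfrak{p}$ in the class of $\alpha$, write $\alpha=\eta_g\gamma^m$, use the minimality of $r_g$ to get $v_\mathfrak{p}(\gamma)\geq 0$, and rescale the global representation of $\eta_g$ by $\gamma$. Your direct use of $v_\mathfrak{p}(\alpha)\geq r_g$, and of Definition \ref{def:8.6} to place $\eta_g$ in $W_\mathfrak{p}$, is slightly cleaner than the paper's write-up (which argues via the congruence modulo $m$ and cites Lemma \ref{lem:8.4}), but the argument is the same.
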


\begin{proof}
    It is clear that $\alpha=0$ is always represented by $Q$ over $\co_\mathfrak{p}$. Therefore, we assume $\alpha\in \co_\mathfrak{p}\setminus\{0\}$. Let $[g]=\alpha\cdot K_\mathfrak{p}^{\times m}$ be the coset of $\alpha$. Since $\alpha\neq 0$, we have $[g]\in B_\mathfrak{p}$, where $B_\mathfrak{p}$ is from Lemma \ref{lem:8.3}. By Lemma \ref{lem:8.4}, there exists $\eta_g\in \co_K^+$ such that $\eta_g\in[g]$ and $v_\mathfrak{p}(\eta_g)=r_g$. By our assumption, there exists $x_g\in\co_K^n$ such that $Q(x_g)=\eta_g$. Since $\alpha,\eta_g\in[g]$, there exists a $\gamma\in K_\mathfrak{p}^\times$ such that $\alpha=\eta_g \gamma^m$. Then, we have $v_\mathfrak{p}(\alpha)=v_\mathfrak{p}(\eta_g)+mv_\mathfrak{p}(\gamma)$.

    Recall from proof of Lemma \ref{lem:8.3} that for  every $\alpha\in [g]\cap \co_\mathfrak{p}$, we have $v_\mathfrak{p}(\alpha)\equiv r_g \pmod m$ by minimality, so, we have $v_\mathfrak{p}(\alpha)=r_g+k_\alpha m$ for some $k_\alpha\in\Z_{\geq 0}$. Using the fact that $v_\mathfrak{p}(\alpha)=v_\mathfrak{p}(\eta_g)+mv_\mathfrak{p}(\gamma)$, we get $v_\mathfrak{p}(\gamma)=k_\alpha\geq 0$, so $\gamma\in\co_\mathfrak{p}$.

    Define $y=\gamma x_g\in\co_\mathfrak{p}^n$. Then, we have 
    \[
    Q(y)=\sum_{i=1}^na_i(\gamma x_{g,i})^m=\gamma^m \sum_{i=1}^na_i(x_{g,i})^m=\eta_g\gamma^m=\alpha,
    \]
    as we wanted.
\end{proof}

Note that Proposition \ref{prop:8.5} is still not sufficient for the assumption of Corollary \ref{cor:alg} because the multiplication by $\gamma$ (in the above proof) may enlarge the valuation depending on $\alpha$. However, in the next lemma, we will show that this issue can be resolved by adding one extra coefficient in the $m$-ic form.

\begin{lemma}\label{lem:8.6}
    Let $m\geq 2$ be an even integer, $K$ be a totally real number field, $\mathfrak{p}\subset\co_K$ be a prime ideal, and $Q(x)=\sum_{i=1}^na_ix_i^m$ be a totally positive definite diagonal $m$-ic form over $K$ such that 
    \[
    Q=Q_1\perp\langle \beta \rangle\perp Q_2,
    \]
    where $\beta \in \co_K^+$, $Q_1$ is a diagonal $m$-ic form representing all $W_\mathfrak{p}$ (defined in \eqref{wp}), and $Q_2$ is an arbitrary diagonal $m$-ic form (possibly empty form). Then, for every $\alpha\in\co_K^+$, there exists $x\in\co_\mathfrak{p}^n$ such that $Q(x)=\alpha$ and 
    \[
    \min_{1\leq i\leq n} v_\mathfrak{p}(m a_ix_i^{m-1})\leq v_\mathfrak{p}(m)+v_\mathfrak{p}(\beta).
    \]
\end{lemma}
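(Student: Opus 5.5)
The plan is to force the coordinate belonging to the extra coefficient $\beta$ to be a $\mathfrak{p}$-adic unit, and let $Q_1$ absorb everything else. This works because, by Proposition \ref{prop:8.5}, $Q_1$ is locally universal at $\mathfrak{p}$.

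Concretely, fix $\alpha\in\co_K^+$ and consider $\gamma:=\alpha-\beta\cdot 1^m=\alpha-\beta$. Since $\alpha,\beta\in\co_K$, we have $\gamma\in\co_K\subset\co_\mathfrak{p}$; note that $\gamma$ need not be totally positive, but Proposition \ref{prop:8.5} does not require that. Since $Q_1$ represents every element of $W_\mathfrak{p}$ over $\co_K$, Proposition \ref{prop:8.5} applied to $Q_1$ gives $y\in\co_\mathfrak{p}^{n_1}$ with $Q_1(y)=\gamma$, where $n_1$ is the rank of $Q_1$.

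Now put $x=(y,1,0,\dots,0)\in\co_\mathfrak{p}^n$: the entries of $y$ fill the variables of $Q_1$, the variable of $\langle\beta\rangle$ is set to $1$, and all variables of $Q_2$ are set to $0$. Then
\[
Q(x)=Q_1(y)+\beta\cdot 1^m+Q_2(0)=\gamma+\beta=\alpha.
\]
Let $i_0$ be the index of the coefficient $\beta$. Then
\[
v_\mathfrak{p}(m a_{i_0}x_{i_0}^{m-1})=v_\mathfrak{p}(m\beta)=v_\mathfrak{p}(m)+v_\mathfrak{p}(\beta),
\]
so the minimum over all $i$ is at most this value, which is the claimed bound.

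There is essentially no obstacle. The only points to check are that Proposition \ref{prop:8.5} applies to an arbitrary $\gamma\in\co_\mathfrak{p}$ (including $0$ and non-totally-positive elements), and that setting the $Q_2$ variables to zero is harmless since $Q_2$ may be empty. If one wants flexibility, any unit $u\in\co_\mathfrak{p}^\times$ can replace $1$ in the $\beta$-coordinate, and the same argument goes through.
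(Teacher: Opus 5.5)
Your proposal is correct and follows essentially the same route as the paper. The paper also represents $\gamma=\alpha-\beta\in\co_\mathfrak{p}$ by $Q_1$ over $\co_\mathfrak{p}$ via Proposition \ref{prop:8.5}, sets the $\beta$-coordinate to $1$ and the $Q_2$-coordinates to $0$, and reads the bound $v_\mathfrak{p}(m)+v_\mathfrak{p}(\beta)$ off the $\beta$-term.
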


\begin{proof}
    Let $\alpha\in\co_K^+$. Since $\beta\in \co_K^+$, the element $\gamma=\alpha-\beta\in \co_\mathfrak{p}$. By Proposition \ref{prop:8.5}, there exists $y=(y_1,\dots,y_k)\in \co_\mathfrak{p}^k$ such that $Q_1(y)=\gamma$, where $k$ is the rank of $Q_1$. Define $x=(y_1,\dots,y_k,1,0,\dots,0)\in \co_\mathfrak{p}^n$. Then we have 
    \[
    Q(x)=Q_1(y)+\beta \cdot 1^m+0=\gamma+\beta.
    \]
    For valuation estimate, observe that 
    \[
    \min_{1\leq i\leq n} v_\mathfrak{p}(m a_ix_i^{m-1})\leq v_\mathfrak{p}(m \beta \cdot 1^{m-1})\leq  v_\mathfrak{p}(m)+v_\mathfrak{p}(\beta),
    \]
    as required. 
\end{proof}

\section{Existence and uniqueness of criterion set}\label{S9}

In this section, we will prove that a unique minimal criterion set for the universal diagonal $m$-ic form exists. 

Throughout this section, $K$ will be a totally real number field of degree $d$ and $m\geq 2$ will be an even integer. We will utilize the notations and definitions from Section \ref{S3}.

\subsection{Termination of Algorithm \ref{alg:escalation}}

We now establish that the Algorithm \ref{alg:escalation} indeed terminates, which will imply, by Proposition \ref{lem:finitecriterionset}, that a criterion set exists. Recall from Section \ref{S3} that an escalation path is a sequence of diagonal $m$-ic forms $Q_0 (\text{empty form}), Q_1,Q_2,\dots$ with $Q_{i+1}=Q_i\perp\langle \beta_{i+1}\rangle$, where $\beta_{i+1}\in E(Q_i,\alpha_i)$ for some truant $\alpha_i\in T(Q_i)$, and each $Q_i$ has rank $i$. Note that a path is infinite if $T(Q_i)\neq \emptyset$; equivalently, none of the $Q_i$ are universal. The algorithm \ref{alg:escalation} yields a rooted tree $\mathcal{T}$, with the root (root node) $Q_0$. Let us call $\mathcal{T}$ the \emph{escalation tree}. By a node, we mean a vertex of the tree. For a non-universal form $Q$, the set $\{Q\perp \langle\beta \rangle\mid \beta\in  E(Q,\alpha_i)\}$ denotes the branches at the node $Q$. In particular, by Lemma \ref{finitenessof truant}, we know that each node has finitely many branches. 

\begin{lemma}\label{lem:9.2}
    Let $\mathcal{T}$ be the escalation tree. Then, the Algorithm \ref{alg:escalation} terminates if and only if $\mathcal{T}$ is a finite tree, i.e., the set of nodes of $\mathcal{T}$ is finite.
\end{lemma}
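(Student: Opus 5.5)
The plan is to argue both directions directly from how Algorithm \ref{alg:escalation} builds $\mathcal{T}$, identifying the nodes of $\mathcal{T}$ with the forms the algorithm processes. Each iteration of the while loop removes exactly one form $Q$ from $\mathcal{Q}$ and processes it. If $Q$ is non-universal, the algorithm inserts its children $Q\perp\langle\beta\rangle$, for $\beta\in E(Q,\alpha)$ and $\alpha\in T(Q)$, into $\mathcal{Q}$. By Lemma \ref{finitenessof truant} there are finitely many such children. Every node of $\mathcal{T}$ other than $Q_0$ has a unique parent, namely the form obtained by deleting the last added coefficient. Every inserted node is eventually chosen, since the loop only ends when $\mathcal{Q}=\emptyset$. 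So the set of nodes of $\mathcal{T}$ is exactly the set of processed forms, counted with multiplicity along the tree.

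For the direction ($\Rightarrow$), suppose the algorithm terminates. Then the while loop runs only finitely many times, and each iteration processes exactly one node. Every node of $\mathcal{T}$ was inserted into $\mathcal{Q}$ and therefore had to be removed before $\mathcal{Q}$ became empty. Hence the number of nodes is at most the number of iterations, which is finite.

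For the direction ($\Leftarrow$), suppose $\mathcal{T}$ has finitely many nodes. Each iteration removes one element of $\mathcal{Q}$ and adds only children of that element. An injective bookkeeping shows that each node is inserted into $\mathcal{Q}$ exactly once, when its parent is processed, and removed exactly once. Thus the total number of iterations equals the number of nodes, which is finite. The loop condition $\mathcal{Q}\neq\emptyset$ must therefore fail after finitely many steps, so the algorithm terminates.

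The only delicate point is the bookkeeping itself. Treating $\mathcal{Q}$ as a set, the same form could in principle arise along two different paths. I would handle this by regarding $\mathcal{T}$ as the tree of insertion events, which is how it is defined. Alternatively, one can note that merging duplicates only reduces the number of iterations, so finiteness of the node set still bounds the iteration count. For the other direction, one can note that any node count of the tree bounds the distinct forms. The remaining ingredient is that each node has finitely many branches, which is exactly Lemma \ref{finitenessof truant}; this ensures each iteration is itself a finite step, so termination is equivalent to the loop running finitely often.
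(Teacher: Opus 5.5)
Your argument is correct and is essentially the paper's. Both identify the nodes of $\mathcal{T}$ with the forms ever inserted into $\mathcal{Q}$, so termination is equivalent to finiteness of the node set; your iteration count just spells out what the paper states in one line. One slip: your closing aside for ($\Rightarrow$) under set-merging is backwards (you would need that finitely many processed forms bound the tree, e.g.\ via bounded rank and finite branching, not that the node count bounds the distinct forms), but your main line treats $\mathcal{T}$ as the tree of insertion events, so the proof is unaffected.
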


\begin{proof}
    Run Algorithm \ref{alg:escalation}; it maintains a queue $\mathcal{Q}$ of forms to process, starting with the empty form $Q_0$. In each iteration, it takes a form $Q$, computes its truant set $T(Q)$, and if $T(Q)\neq \emptyset$ inserts all $Q\perp \langle\beta\rangle$ with $\beta \in E(Q,\alpha)$ for each $\alpha\in T(Q)$, into $\mathcal{Q}$. The algorithm stops when $\mathcal{Q}$ is empty. Therefore, the forms ever inserted into $\mathcal{Q}$ are exactly the node set of $\mathcal{T}$. Thus, the algorithm terminates if and only if the node set of $\mathcal{T}$ is finite.
\end{proof}

Next, we need the well-known K\H{o}nig's lemma from graph theory. 

\begin{lemma}[{\cite[Theorem $1.10$]{kl}}]\label{koniglemma}
    Let $\mathcal{T}$ be an infinite rooted tree such that every node has finitely many branches. Then, there exists an infinite path through $\mathcal{T}$. 
\end{lemma}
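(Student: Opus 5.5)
This is K\H{o}nig's lemma, which the paper cites from \cite[Theorem $1.10$]{kl}; I would nonetheless prove it directly by the standard compactness argument. Call a node $v$ of $\mathcal{T}$ \emph{big} if the subtree rooted at $v$ (all descendants of $v$, together with $v$) is infinite. The root $Q_0$ is big, since its subtree is all of $\mathcal{T}$, which is infinite by hypothesis.

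The key step is the following claim: every big node has at least one big child. Let $v$ be big with children $w_1,\dots,w_k$; by hypothesis $k$ is finite. The subtree at $v$ is $\{v\}$ together with the union of the subtrees at $w_1,\dots,w_k$. If none of the $w_j$ were big, this would be a finite union of finite sets, hence finite, contradicting that $v$ is big. So some $w_j$ is big. Note that this step is exactly where finite branching is used; without it the claim fails, for instance for an infinite star.

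Using the claim, I would build the path recursively. Set $v_0=Q_0$, and, given a big node $v_i$, choose $v_{i+1}$ to be a big child of $v_i$. This uses only dependent choice, or no choice at all if we fix an ordering of the finitely many branches at each node, as Algorithm \ref{alg:escalation} implicitly does. The sequence $v_0,v_1,v_2,\dots$ consists of nodes with each $v_{i+1}$ a child of $v_i$. Since $\mathcal{T}$ is a tree, these nodes are pairwise distinct: the depth strictly increases. Therefore this is an infinite path starting at the root.

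There is no real obstacle here. The only point needing care is the claim, and specifically the fact that the finiteness of branches (Lemma \ref{finitenessof truant}, for the escalation tree) is what makes the union finite.
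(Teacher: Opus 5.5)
Your proof is correct: it is the standard argument for K\H{o}nig's lemma. You pass to nodes with infinite subtree, use finite branching to show that each such node has a child whose subtree is also infinite, and descend from the root. The paper gives no proof of this statement and only cites it from \cite[Theorem $1.10$]{kl}, so your argument supplies what that citation stands in for.
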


\begin{theorem}\label{termination}
    Let $m\geq 2$ be an even integer, and $K$ be a totally real number field of degree $d$. Then, Algorithm \ref{alg:escalation} terminates after processing finitely many diagonal $m$-ic forms over $K$.
\end{theorem}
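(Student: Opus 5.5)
We argue by contradiction. Suppose Algorithm \ref{alg:escalation} does not terminate. By Lemma \ref{lem:9.2} the escalation tree $\mathcal{T}$ is infinite, and by Lemma \ref{finitenessof truant} every node has finitely many branches. K\H{o}nig's lemma (Lemma \ref{koniglemma}) therefore gives an infinite escalation path $Q_0,Q_1,Q_2,\dots$ with $Q_{i+1}=Q_i\perp\langle\beta_{i+1}\rangle$, where no $Q_i$ is universal. By Lemma \ref{lem:3.8}(4), $\min\N(\operatorname{Unrep}(Q_i))\to\infty$. Our goal is to find an index $N$ such that $Q_N$ satisfies the hypotheses of Corollary \ref{cor:alg}. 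Once we have this, $Q_N$ represents every $\alpha$ with $\N(\alpha)>C$. Then for $i\geq N$ large enough that $\min\N(\operatorname{Unrep}(Q_i))>C$, the form $Q_i$ (which contains $Q_N$) represents everything of norm $\leq C$ by minimality of truants, and everything of norm $>C$ via $Q_N$. So $Q_i$ is universal, which contradicts the path being infinite.

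To verify the hypotheses we need three things: the rank condition, and the local condition at the bad and good primes. The rank condition is automatic, since $Q_i$ has rank $i$; we take $N>d(d+1)(m-1)2^{m-1}$ plus some slack. For the local condition, take the finite set $S$ of primes from Proposition \ref{prop:8.2}, built from $m$ and the first three coefficients $\beta_1,\beta_2,\beta_3$. This is legitimate because the rank exceeds $3$ after three steps, and $S$ depends only on those three coefficients, which are fixed along the path. For $\mathfrak{p}\notin S$, Proposition \ref{prop:8.2}(2) gives local solutions with valuation $0$.

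For each of the finitely many $\mathfrak{p}\in S$ we use Lemma \ref{lem:8.6}. We need the path to contain a subform representing the finite set $W_\mathfrak{p}$, together with one further coefficient $\beta_j$ placed after that subform, and we need $v_\mathfrak{p}(\beta_j)$ to be bounded. The set $W:=\bigcup_{\mathfrak{p}\in S}W_\mathfrak{p}$ is finite, so it has bounded norm. Because $\min\N(\operatorname{Unrep}(Q_i))\to\infty$, there is an index $i_1$ such that $Q_{i_1}$ represents all of $W$, and hence all of $W_\mathfrak{p}$ for every $\mathfrak{p}\in S$. Now take $j=i_1+1$. The bound on $v_\mathfrak{p}(\beta_j)$ can be handled crudely: $\beta_j$ is one fixed element of the path, so $t:=\max_{\mathfrak{p}\in S}\bigl(v_\mathfrak{p}(m)+v_\mathfrak{p}(\beta_{j})\bigr)$ is a finite integer that depends only on the form. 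Lemma \ref{lem:8.6}, applied with $Q_1=Q_{i_1}$, $\beta=\beta_j$ and $Q_2$ the remaining coefficients, then gives the required local solutions at each $\mathfrak{p}\in S$ with valuation at most $t$.

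Finally, set $N=\max(j,\lfloor d(d+1)(m-1)2^{m-1}\rfloor+1)$. Then $Q_N$ satisfies Corollary \ref{cor:alg} with this $t$ (using $\max(t,0)$ for the good primes), and the contradiction argument of the first paragraph completes the proof. We expect the main obstacle to be the careful bookkeeping of the local hypothesis. Corollary \ref{cor:alg} quantifies over all $\mathfrak{p}$ and all $\alpha\in\co_K^+$, so one must check that the single $t$ is uniform in $\alpha$; Lemma \ref{lem:8.6} guarantees this. One must also confirm that enlarging the form from $Q_{j}$ to $Q_N$ preserves the local solutions, which holds because the extra variables can be set to zero. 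Everything else is a direct assembly of the earlier results.
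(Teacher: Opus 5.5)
Your proof is correct and follows essentially the same route as the paper. You take an infinite escalation path via K\H{o}nig's lemma, use Proposition \ref{prop:8.2} at the primes outside $S$, and at the primes in $S$ you apply Lemma \ref{lem:8.6} to a subform $Q_{i_1}$ representing $\bigcup_{\mathfrak{p}\in S}W_\mathfrak{p}$ followed by the next coefficient $\beta_{i_1+1}$; Corollary \ref{cor:alg} then applies to a form of rank exceeding $d(d+1)(m-1)2^{m-1}$. The only difference is cosmetic: you use $\min\mathrm{N}(\operatorname{Unrep}(Q_i))\to\infty$ to conclude that some $Q_i$ is universal, while the paper notes that the pairwise distinct truants would all have norm at most $C$, and these two contradictions are equivalent.
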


\begin{proof}
    By Lemma \ref{lem:9.2}, it is enough to show that the escalation tree $\mathcal{T}$ is finite. For contradiction, suppose that $\mathcal{T}$ is infinite. Then, by Lemma \ref{koniglemma}, there exists an infinite escalation path
    \[
    Q_0, Q_1,Q_2,\dots  \text{ with } Q_{i+1}=Q_i\perp\langle \beta_{i+1}\rangle, 
    \]
    where $\beta_{i+1}\in E(Q_i,\alpha_i)$ for some truant $\alpha_i\in T(Q_i)$. Note that, for each $i$, $\operatorname{rank}(Q_i)=i$, and $Q_i$ is not universal. By Lemma \ref{lem:3.8}, the truants $\alpha_1,\alpha_2,\dots$ are pairwise distinct, and $\min \mathrm{N}(\operatorname{Unrep}(Q_i))\rightarrow \infty$ as $i\rightarrow\infty$.

    Since the path is infinite, there exist $a_1,a_2,a_3$ leading three coefficients of $Q_i$ for all $i\geq 3$. By Proposition \ref{prop:8.2}, there exists a finite set $S$ of prime ideals in $\co_K$ (depending only on $m,a_1,a_2,a_3$) such that for every $i\geq 3$, every $\mathfrak{p}\notin S$, and every $\alpha \in \co_K^+$, there exists a $x\in\co_\mathfrak{p}^i$ with $Q_i(x)=\alpha$ and 
    \[
     \min_{1\leq j\leq i} v_\mathfrak{p}(m a_jx_j^{m-1})=0.
    \]

    For each $\mathfrak{p}\in S$, let $W_\mathfrak{p}$ be a local criterion set as defined in \eqref{wp}. Define 
    \[
    W=\bigcup_{\mathfrak{p}\in S} W_\mathfrak{p}, \quad N_W=\max_{\eta\in W}(\N(\eta)).
    \]
    Note that both $W$ and $N_W$ are finite because $S$ is finite and each $W_\mathfrak{p}$ is finite. As $\min\mathrm{N}(\operatorname{Unrep}(Q_i))\rightarrow \infty$, and $N_W$ is fixed, there exists an index $i_W\geq 3$ such that $\min \mathrm{N}(\operatorname{Unrep}(Q_i))>N_W$. By definition, every class in $\co_K^+/\co_K^{\times m}$ of norm $< N(Q_{i_W})$ is represented by $Q_{i_W}$. In particular, every $\eta\in W$ has $\N(\eta)\leq N_W<\min \mathrm{N}(\operatorname{Unrep}(Q_i))$, so $\eta$ is represented by $Q_{i_W}$, i.e., for each $\mathfrak{p}\in S$ and $\eta\in W$, there exists $y_\eta\in \co_K^{i_W}$ with $Q_{i_W}(y_\eta)=\eta$.

    Let $\beta_{i_W+1}\in E(Q_{i_W},\alpha_{i_W})$ be the next coefficient; such an element exists because the path is infinite. Now, we define
    \[
    \hat{i}=\max\{i_W+1, d(d+1)(m-1)2^{m-1}+1\},
    \]
    where $d$ is the degree of $K$, and set $\hat{Q}=Q_{\hat{i}}$. Then, $\operatorname{rank}(\hat{Q})=\hat{i}> d(d+1)(m-1)2^{m-1}$. By construction, the form $\hat{Q}$ admits an orthogonal decomposition 
    \[
    \hat{Q}=Q_{i_W}\perp \langle \beta_{i_W+1}\rangle\perp Q'
    \]
    for some diagonal $m$-ic form $Q'$ (possibly empty form).

    For $\mathfrak{p}\notin S$; since $\hat{i}\geq 3$, by Proposition \ref{prop:8.2}, for every $\alpha\in \co_K^+$, there exists $x\in \co_\mathfrak{p}^{\hat{i}}$ with $\hat{Q}(x)=\alpha$ and 
    \[
    \min_{1\leq i\leq \hat{i}} v_\mathfrak{p}(m a_ix_i^{m-1})=0.
    \]
    For $\mathfrak{p}\in S$, by Lemma \ref{lem:8.6}, for every $\alpha\in \co_K^+$, there exists $x\in \co_\mathfrak{p}^{\hat{i}}$ such that $\hat{Q}(x)=\alpha$ and 
    \[
    \min_{1\leq i\leq \hat{i}} v_\mathfrak{p}(m a_ix_i^{m-1})\leq v_\mathfrak{p}(m)+v_\mathfrak{p}(\beta_{i_W+1}).
    \]
    Therefore, the assumption of Corollary \ref{cor:alg} holds with 
    \[
    t=\max \left( 0, \{v_\mathfrak{p}(m)+v_\mathfrak{p}(\beta_{i_W+1})\mid \mathfrak{p}\in S\} \right)\geq 0.
    \]
    Note that $t$ is finite because $S$ is finite. Then, by Corollary \ref{cor:alg}, there exists $C>0$ depending only on $K,m,\hat{Q}, \text{ and }t$ such that for every $\alpha\in\co_K^+$ with $\N(\alpha)>C$ is represented by $\hat{Q}$ over $K$. By Lemma \ref{lem:3.8} (1), the same conclusion holds for all $Q_i$ with $i\geq \hat{i}$. Consequently, if for some $i\geq \hat{i}$, an element $\alpha\in \co_K^+/\co_K^{\times m}$ is not represented by $Q_i$, then $\N(\alpha)\leq C$. In particular, for each $i\geq \hat{i}$, the truant $\alpha_{i+1}\in T(Q_i)$ is not represented by $Q_i$, so $\N(\alpha_{i+1})\leq C$ for all $i\geq \hat{i}$. By Lemma \ref{lem:3.8} (2), the truants $\alpha_i$ are pairwise distinct. Thus, we have infinitely many distinct classes in $\alpha\in \co_K^+/\co_K^{\times m}$ with norm $\leq C$, a contradiction. Hence $\mathcal{T}$ is finite, and the algorithm terminates after processing finitely many diagonal $m$-ic forms over $K$.
\end{proof}

Now, we can make Proposition \ref{lem:finitecriterionset} unconditional by using Theorem \ref{termination}.

\begin{corollary}\label{cor:9.4}
    Let $m\geq 2$ be an even integer and $K$ a totally real number field. Then, 
    \[
    \mathcal{C}_K^{(m)} =\mathcal{C}(K,m)= \bigcup_{Q \text{ processed by Algorithm \ref{alg:escalation}}} T(Q)
    \]
    is a criterion set for totally positive definite diagonal $m$-ic forms over $K$, i.e., a totally positive definite diagonal $m$-ic form over $K$ is universal if and only if it represents all elements of $\mathcal{C}_K^{(m)}$.
\end{corollary}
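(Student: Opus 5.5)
This follows by combining Theorem \ref{termination} with Proposition \ref{lem:finitecriterionset}. By Theorem \ref{termination}, Algorithm \ref{alg:escalation} terminates after processing finitely many diagonal $m$-ic forms over $K$. This is exactly the hypothesis of Proposition \ref{lem:finitecriterionset}, so the union $\mathcal{C}(K,m)$ of the truant sets $T(Q)$ over all processed forms $Q$ is finite and is a criterion set. Finiteness holds because the escalation tree $\mathcal{T}$ is finite (Lemma \ref{lem:9.2}) and each $T(Q)$ is finite (Lemma \ref{finitenessof truant}).

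The criterion property itself needs no new argument; it is already contained in the proof of Proposition \ref{lem:finitecriterionset}. One direction is trivial: a universal form represents every element. For the converse, take $Q=\langle a_1,\dots,a_n\rangle$ representing all of $\mathcal{C}(K,m)$ and build a chain of subforms $Q_{I_0}=Q_0\subsetneq Q_{I_1}\subsetneq\cdots$, each of which is a node of $\mathcal{T}$. At each step, if $Q_{I_j}$ is not universal, pick a truant $\alpha\in T(Q_{I_j})\subseteq \mathcal{C}(K,m)$. Since $Q$ represents $\alpha$, Lemma \ref{lem:3.9} gives a new coefficient $a_k\in E(Q_{I_j},\alpha)$, so $Q_{I_j}\perp\langle a_k\rangle$ is a child of $Q_{I_j}$ in $\mathcal{T}$. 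The chain must stop within $n$ steps, and it can only stop at a universal subform, so $Q$ is universal.

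The only point requiring care is that the processed forms are determined by $K$ and $m$ alone: the algorithm branches over all truants and all elements of $E(Q,\alpha)$. Hence the set $\mathcal{C}(K,m)$ does not depend on the form $Q$ being tested. No step is a genuine obstacle, since all the difficulty was already absorbed into Theorem \ref{termination}, and there into Corollary \ref{cor:alg}.
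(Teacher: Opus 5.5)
Your proposal is correct and is exactly the paper's argument: the paper proves Corollary~\ref{cor:9.4} in one line by combining Theorem~\ref{termination} with Proposition~\ref{lem:finitecriterionset}. Your second paragraph re-derives the chain-of-subforms argument from the proof of Proposition~\ref{lem:finitecriterionset}; that recap is accurate but not needed.
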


\begin{proof}
    This follows from Proposition \ref{lem:finitecriterionset} and Theorem \ref{termination}.
\end{proof}

\subsection{Uniqueness of criterion set} Recall from Definition \ref{def:critical}, an element $\alpha\in \co_K^+/\co_K^{\times m}$ is a critical element if there exists a totally positive definite diagonal $m$-ic form $Q$ over $K$ such that $Q$ does not represent $\alpha$, and $Q$ represents all $(\co_K^+/\co_K^{\times m})\setminus \{\alpha\}$. We denote by $\overline{\mathcal{C}}_K^{(m)}$ the set of all critical elements. With all the machinery developed so far, we will now show that $\overline{\mathcal{C}}_K^{(m)}$ is a criterion set for diagonal $m$-ic forms. This implies that $\overline{\mathcal{C}}_K^{(m)}$ is indeed a unique minimal criterion set with respect to inclusion, because critical elements must belong to every criterion set, thereby proving our main theorem from the introduction. To this end, using the strategy of Kala--Kr\'{a}sensk\'{y}--Romeo \cite[Proposition $3.1$]{kkr}, we classify the critical elements with the tools we have developed thus far. Indeed, our proof of uniqueness differs from \cite{kkr}, as we adapt the proof of Theorem \ref{termination} to a slightly different setting. 

\begin{theorem}\label{uniqueness}
    Let $m\geq 2$ be an even integer and $K$ be a totally real number field. The set of all critical elements $\overline{\mathcal{C}}_K^{(m)}$ is a criterion set for diagonal $m$-ic forms over $K$. Hence, $\overline{\mathcal{C}}_K^{(m)}$ is the unique criterion set that is minimal with respect to inclusion. 
\end{theorem}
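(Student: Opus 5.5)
We follow the strategy of \cite[Proposition $3.1$]{kkr}. Minimality is formal: if $\alpha$ is critical with witness $Q$ and $\mathcal{C}$ is any criterion set not containing $\alpha$, then $Q$ represents all of $\mathcal{C}$ but is not universal, which is a contradiction. Hence every criterion set contains $\overline{\mathcal{C}}_K^{(m)}$. It therefore suffices to show that $\overline{\mathcal{C}}_K^{(m)}$ is itself a criterion set. Once that is done, it is contained in every criterion set and is a criterion set, so it is the unique minimal one. Finiteness comes for free from Corollary \ref{cor:9.4}, since $\overline{\mathcal{C}}_K^{(m)}\subseteq \mathcal{C}(K,m)$.

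To show that $\overline{\mathcal{C}}_K^{(m)}$ is a criterion set, we argue by contradiction. Suppose $Q=\langle a_1,\dots,a_n\rangle$ is totally positive definite, represents every critical element, and is not universal. Let $\alpha$ be a truant of $Q$; then $\alpha$ is not critical. The goal is to construct a form $\tilde Q$ that represents everything except $\alpha$, which would make $\alpha$ critical and give the contradiction.

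The natural construction is $\tilde Q=Q\perp\langle\gamma_1,\gamma_2,\dots,\gamma_s\rangle$, with the $\gamma_j$ chosen so that every element other than $\alpha$ becomes represented while $\alpha$ stays unrepresented. Adding $\langle\gamma\rangle$ keeps $\alpha$ unrepresented precisely when $\gamma\notin E(Q',\alpha)$ for the current form $Q'$. Since $\alpha$ remains a truant of every intermediate form, Lemma \ref{lem:3.6} says this is exactly the condition that no $y\neq0$ satisfies $\gamma y^m\preceq\alpha$. A sufficient condition is $\N(\gamma)>\N(\alpha)$, because $\gamma y^m\preceq \alpha$ forces $\N(\gamma)\le \N(\gamma y^m)\le\N(\alpha)$. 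So all added coefficients will be taken with norm larger than $\N(\alpha)$, and their classes will be chosen outside the finite set $D$ of classes of norm $\le\N(\alpha)$.

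The $\gamma_j$ are chosen by adapting the proof of Theorem \ref{termination}. First, add three such coefficients and further ones so that the rank exceeds $d(d+1)(m-1)2^{m-1}$. Proposition \ref{prop:8.2} then gives a finite set $S$ of primes outside which local solvability with $t=0$ holds. For each $\mathfrak p\in S$, use Lemma \ref{lem:8.4'} and Remark \ref{rem1} to choose a local criterion set $W_\mathfrak p$ avoiding $D$, and add every element of $W=\bigcup_{\mathfrak p\in S}W_\mathfrak p$ as a coefficient; the subform $\langle \eta\mid \eta\in W\rangle$ trivially represents $W$. Next add one more coefficient $\beta$ of large norm, so that Lemma \ref{lem:8.6} applies for each $\mathfrak p\in S$ and bounds the valuation by $v_\mathfrak p(m)+v_\mathfrak p(\beta)$. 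Corollary \ref{cor:alg} then yields $C$ such that the resulting form $Q''$ represents every element of norm $>C$. Finally, the elements of norm $\le C$ other than $\alpha$ form a finite list. Those of norm $\le\N(\alpha)$ other than $\alpha$ are already represented by $Q$, since $\alpha$ is a truant and all truants have the same norm. A different class of equal norm is a subtlety here: if $Q$ does not represent such an element, it is also a truant. To handle this, choose $\alpha$ to be a truant that is not critical, which is possible because all truants of $Q$ lie outside $\overline{\mathcal{C}}_K^{(m)}$ as $Q$ represents all critical elements; any other unrepresented truant $\alpha'$ we simply add as a coefficient $\langle\alpha'\rangle$. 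This is legitimate because $\alpha'\neq\alpha$ has the same norm, and the only way $\alpha' y^m\preceq\alpha$ could hold is with $\N(y)=1$ and equality, forcing $\alpha'=\alpha$ in $\co_K^+/\co_K^{\times m}$. Elements with $\N(\alpha)<\N(\cdot)\le C$ are added directly as coefficients, and each has norm $>\N(\alpha)$.

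The resulting $\tilde Q$ represents everything except $\alpha$ and does not represent $\alpha$, so $\alpha$ is critical, which is a contradiction. The main obstacle is the bookkeeping in the step that keeps $\alpha$ unrepresented: every added coefficient must avoid $E(\cdot,\alpha)$, while the local hypotheses of Corollary \ref{cor:alg} must still be met using only coefficients of norm exceeding $\N(\alpha)$. The freedom to choose $W_\mathfrak p$ and $\beta$ of arbitrarily large norm, granted by Lemma \ref{lem:8.4'}, is what resolves this.
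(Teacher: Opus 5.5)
Your argument is correct. Its outer structure matches the paper: minimality is formal, finiteness comes from Corollary \ref{cor:9.4}, and everything reduces to showing that a truant $\alpha$ of $Q$ is critical. The difference is in how you prove that last step, which is the content of the paper's Proposition \ref{prop:9.6}.

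\textbf{The paper's route.} The paper builds the witness greedily. Starting from $Q$, it repeatedly adjoins a truant other than $\alpha$ and uses the norm argument to show that $\alpha$ is never represented. It then proves that the process stops by rerunning the contradiction argument of Theorem \ref{termination}: the adjoined truants are pairwise distinct with norms tending to infinity. A local criterion set $W'_\mathfrak{p}$ avoiding only $D=\{\alpha\}$ is eventually represented by some intermediate form. Corollary \ref{cor:alg} then bounds the norms of all later truants, which gives the contradiction.

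\textbf{Your route.} You write down the witness in one step. Every adjoined coefficient either has norm $>\N(\alpha)$ or is a class $\neq\alpha$ of norm $\N(\alpha)$, so any representation of $\alpha$ is excluded immediately. You insert the local criterion sets directly as coefficients. This is why you need the stronger avoidance in Lemma \ref{lem:8.4'}, with $D$ equal to all classes of norm $\le \N(\alpha)$; this is legitimate because its proof lets $\N(\eta_g)$ be arbitrarily large. After one application of Corollary \ref{cor:alg} to a fixed form, you fill in the finitely many remaining classes by hand, namely those of norm in $(\N(\alpha),C]$ together with any unrepresented classes of norm exactly $\N(\alpha)$. Your sentence claiming that all classes of norm $\le\N(\alpha)$ other than $\alpha$ are already represented by $Q$ is wrong as written, but you correct it in the next sentence.

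\textbf{Comparison.} Your route avoids the infinite process and the proof by contradiction, including the distinctness and norm-growth bookkeeping of Lemma \ref{lem:3.8}. It yields a fully explicit witness, at the price of many large, non-truant coefficients. The paper's greedy route produces an escalation-type witness made of truants, which fits the escalation framework of Section \ref{S3}, and it reuses the termination proof almost verbatim.
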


To prove this, we first need the following characterization of critical elements. 

\begin{proposition}\label{prop:9.6}
    Let $m\geq 2$ be an even integer and $K$ be a totally real number field. Then, $\alpha\in \co_K^+/\co_K^{\times m}$ is critical if and only if $\alpha$ is a truant of some totally positive definite diagonal $m$-ic form over $K$. 
\end{proposition}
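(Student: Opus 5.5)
The forward direction is immediate. If $\alpha$ is critical, take the form $Q$ from Definition \ref{def:critical}: it fails to represent $\alpha$ and represents every other class. Then $\operatorname{Unrep}(Q)=\{\alpha\}$, so $\alpha$ is the unique element of $T(Q)$.

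For the converse, suppose $\alpha\in T(Q)$ for some totally positive definite diagonal $Q$. The plan is to enlarge $Q$ to a form that misses only $\alpha$. We build $\hat Q=Q\perp\langle\gamma_1,\dots,\gamma_s\rangle$, choosing every added coefficient with norm strictly larger than $\N(\alpha)$. The key observation is that such coefficients cannot help represent $\alpha$. If $\hat Q(x)=\alpha$ and some $x_j\neq0$ in an added variable, then $\gamma_jx_j^m\preceq\alpha$, so $\N(\gamma_j)\le\N(\gamma_j)\N(x_j)^m\le\N(\alpha)$ (exactly as in Lemma \ref{lem:3.6}), a contradiction. Hence $\hat Q$ represents $\alpha$ only via $Q$, and so it does not represent $\alpha$. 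It remains to choose the $\gamma_j$ so that $\hat Q$ represents every class other than $\alpha$.

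We mimic the proof of Theorem \ref{termination}.

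First, the infinite part. We add three coefficients $\gamma_1,\gamma_2,\gamma_3$ of large norm, for instance rational integers $N\gg0$, with $N^d>\N(\alpha)$. Proposition \ref{prop:8.2} then gives a finite set $S$ of primes outside of which the local hypothesis of Corollary \ref{cor:alg} holds with valuation $0$.

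Second, the local part. For each $\mathfrak p\in S$, we use Remark \ref{rem1} (Lemma \ref{lem:8.4'}) to choose a local criterion set $W_\mathfrak p$ avoiding the finite set $D=\{\alpha\}$, and with all norms exceeding $\N(\alpha)$. This is possible because the construction in Lemma \ref{lem:8.4} produces elements of arbitrarily large norm. We add all elements of $W=\bigcup_{\mathfrak p\in S}W_\mathfrak p$ as coefficients, so this part of the form represents each $W_\mathfrak p$. We then add one further coefficient $\beta$ of large norm, for example another large rational integer, and finally pad with large-norm coefficients until the rank exceeds $d(d+1)(m-1)2^{m-1}$.

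With these choices, Lemma \ref{lem:8.6} (with $Q_1$ the part containing $W_\mathfrak p$) and Proposition \ref{prop:8.2} together verify the hypothesis of Corollary \ref{cor:alg}, with $t=\max_{\mathfrak p\in S}(v_\mathfrak p(m)+v_\mathfrak p(\beta))$. Corollary \ref{cor:alg} then gives a constant $C$ such that every element of norm greater than $C$ is represented.

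Third, the finitely many classes of norm at most $C$. Each such class $\delta\neq\alpha$ is handled by one of two cases:
\begin{itemize}
\item if $\N(\delta)<\N(\alpha)$, then $\delta$ is already represented by $Q$;
\item otherwise, we add $\delta$ itself as a coefficient.
\end{itemize}
In the second case $\N(\delta)\ge\N(\alpha)$, but $\delta\neq\alpha$, so we must check that adding $\delta$ does not create a representation of $\alpha$. This is the main obstacle, because the norm argument only rules out coefficients of norm strictly greater than $\N(\alpha)$. For such $\delta$ the argument above shows it could only contribute if $\delta x^m\preceq\alpha$. Norms then force $\N(x)=1$ and $\N(\delta)=\N(\alpha)$, which gives $\alpha-\delta x^m\succeq0$ with equal norms. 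Since $\alpha-\delta x^m\succ 0$ would imply $\N(\delta x^m)<\N(\alpha)$, we must have $\delta x^m=\alpha$ with $x$ a unit, i.e.\ $\delta=\alpha$ as classes, a contradiction. So the same argument works for coefficients $\ne\alpha$ with $\N\ge\N(\alpha)$.

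This observation also removes the need for strict norm inequalities in the earlier steps: it suffices that no added coefficient lies in the class $\alpha$. Adding coefficients only enlarges the set of represented elements, so the bound $C$ from Corollary \ref{cor:alg} remains valid after these additions. The resulting form $\hat Q$ represents everything except $\alpha$, so $\alpha$ is critical.
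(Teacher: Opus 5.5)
Your proof is correct, and it reaches the converse by a more direct route than the paper.

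The paper runs a restricted escalation from the given form. At each step it adjoins a class $\beta_{i+1}\neq\alpha$ of minimal norm among those not yet represented. It checks, by the same norm argument you give, that no stage ever represents $\alpha$. It then shows by contradiction that the process stops, essentially repeating the proof of Theorem \ref{termination}: local criterion sets $W'_\mathfrak{p}$ avoiding $\alpha$ get represented once $\N(\beta_i)$ is large. After that, Lemma \ref{lem:8.6} and Corollary \ref{cor:alg} bound the norms of all later $\beta_i$, contradicting their distinctness.

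You instead build the form outright. You adjoin the elements of the $W_\mathfrak{p}$, a coefficient $\beta$, and padding, all of norm exceeding $\N(\alpha)$. You invoke Corollary \ref{cor:alg} once to get $C$, and then adjoin every remaining class $\delta\neq\alpha$ with $\N(\alpha)\le\N(\delta)\le C$. This avoids the infinite sequence, Lemma \ref{lem:3.8}, and the ``eventually represents $W'$'' step. It also isolates a clean statement: adjoining coefficients of norm at least $\N(\alpha)$ outside the class of $\alpha$ can never produce a representation of $\alpha$.

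The paper's version, in turn, stays inside the escalation framework, since every adjoined coefficient is a minimal unrepresented class of the restricted problem, and it reuses the termination proof with minimal changes. The local and analytic input (Proposition \ref{prop:8.2}, Lemmas \ref{lem:8.4'} and \ref{lem:8.6}, Corollary \ref{cor:alg}) is the same in both.

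One phrasing to tighten: ``it suffices that no added coefficient lies in the class $\alpha$'' is only true together with $\N(\gamma)\ge\N(\alpha)$. Over $\Q$, adjoining $1$ to $\langle 1\rangle$ represents its truant $2$. Every coefficient you actually add satisfies this condition, so the construction is unaffected. Likewise, $\N(x)^m=1$ gives $|\N(x)|=1$, which is what makes $x$ a unit.
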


\begin{proof}
    If $\alpha\in \co_K^+/\co_K^{\times m}$ is critical, then by definition $\alpha$ is a truant of some totally positive definite diagonal $m$-ic form over $K$.

    Conversely, suppose $\alpha$ is a truant of some totally positive definite diagonal $m$-ic form, say $Q_1$ over $K$. We shall construct a form that represents all $\alpha\in (\co_K^+/\co_K^{\times m})\setminus \{\alpha\}$, thereby proving the the criticality of $\alpha$. To do this, we define a sequence of escalated forms $Q_1,Q_2,Q_3, \dots$, starting from $Q_1$ as follows. Assume that $Q_i$ does not represent all $ (\co_K^+/\co_K^{\times m})\setminus \{\alpha\}$. Choose $\beta_{i+1}\in (\co_K^+/\co_K^{\times m})\setminus \{\alpha\}$ that is not represented by $Q_i$ and has minimal norm among all non-represented elements by $Q_i$; we can choose $\beta_{i+1}$ a truant of $Q_i$. Define $Q_{i+1}=Q_i\perp \langle\beta_{i+1}\rangle$. Note that this is precisely the escalation process, but restricted to $(\co_K^+/\co_K^{\times m})\setminus \{\alpha\}$ and by truant $\beta_{i+1}$ of $Q_i$. If there exists an $i\geq 1$ such that $Q_i$ represents every $(\co_K^+/\co_K^{\times m})\setminus \{\alpha\}$, then $Q_i$ is the desired form that will prove $\alpha$ is critical. 

    Let us first claim that no $Q_i$ represents $\alpha$. For $i=1$, the claim is trivially true, as $\alpha$ is a truant of $Q_1$. Assume $Q_i$ does not represent $\alpha$, and consider the next form $Q_{i+1}$. If $Q_{i+1}$ represented $\alpha$, then 
    \[
    \alpha=\gamma+\beta_{i+1}z^m
    \]
    with $\gamma$ represented by $Q_i$ and $z\in \co_K\setminus \{0\}$ because $\alpha$ is not represented by $Q_i$. Then, we have 
    \begin{equation}\label{e:9.1}
        \N(\alpha)=\N(\gamma+\beta_{i+1}z^m)\geq \N(\beta_{i+1}z^m)=\N(\beta_{i+1})\N(z)^m.
    \end{equation}
    Since $\beta_{i+1}\in (\co_K^+/\co_K^{\times m})\setminus \{\alpha\}$ was chosen to have minimal norm, and $\alpha\notin (\co_K^+/\co_K^{\times m})\setminus \{\alpha\}$, we have $\N(\beta_{i+1})\geq \N(\alpha)$ (otherwise $\beta_{i+1}$ would have been a truant of $Q_i$ with norm smaller than $\N(\alpha)$, contradicting the definition of truant). Thus, 
    \[
    \N(\alpha)\geq \N(\beta_{i+1})\N(z)^m\geq \N(\alpha)\N(z)^m. 
    \]
    This implies $\N(z)^m=1$, and all inequalities in \eqref{e:9.1} are indeed equalities. Therefore, for every embedding $\sigma$, we have 
    \[
    \sigma(\gamma)+\sigma(\beta_{i+1})\sigma(z)^m=\sigma(\beta_{i+1})\sigma(z)^m,
    \]
    so $\sigma(\gamma)=0$ for all $\sigma$. Therefore, $\gamma=0$, and $\alpha=\beta_{i+1}z^m$ with $\N(z)=1$, i.e., $z\in \co_K^\times$. Thus, $\alpha$ and $\beta_{i+1}$ lie in the same class modulo $\co_K^{\times m}$, contradicting $\beta_{i+1}\in (\co_K^+/\co_K^{\times m})\setminus \{\alpha\}$. Therefore, no $Q_i$ represents $\alpha$. To establish the proposition, we now need to show that the construction of forms terminates, i.e., there is a $j\geq 1$ such that the form $Q_j$ in the sequence represents all $\co_K^+/\co_K^{\times m}$. To prove this, we use the strategy of Theorem \ref{termination}.

    Suppose for contradiction that the process does not terminate, i.e., for all $i\geq 1$, the form $Q_i$ does not represent all $\co_K^+/\co_K^{\times m}$. Then, by Lemma \ref{lem:3.8} (2), $\beta_1,\beta_2,\dots$ are pairwise distinct. Also, from Lemma \ref{lem:3.8}, it follows that the sequence $\N(\beta_i)$ is non-decreasing and $\N(\beta_i)\rightarrow\infty$ as $i\rightarrow\infty$.

    The form $Q_3$ in our construction has rank $\geq 3$. Let us fix its first three coefficients $a_1,a_2,a_3$. Note that $a_1,a_2,a_3$ will then be the leading first three coefficients of all $Q_i$ with $i\geq 3$. Since $i\geq 3$, by Proposition \ref{prop:8.2}, there exists a finite set $S$ of prime ideal in $\co_K$ such that for every $i\geq 3$, every $\mathfrak{p}\notin S$, and every $\gamma \in \co_K^+$, there exists a $x\in\co_\mathfrak{p}^i$ with $Q_i(x)=\gamma$ and 
    \[
     \min_{1\leq j\leq i} v_\mathfrak{p}(m a_jx_j^{m-1})=0.
    \]

    For each $\mathfrak{p}\in S$, by Lemma \ref{lem:8.4'} with $D=\{\alpha\}$, there exists a local criterion set $W_\mathfrak{p}'$ at $\mathfrak{p}$ (see Remark \ref{rem1}). Set 
    \[
    W'=\bigcup_{\mathfrak{p}\in S}W_\mathfrak{p}'\quad \text{and } \, N_{W'}=\max_{\eta\in W'}\N(\eta).
    \]
    Since $\N(\beta_i)\rightarrow\infty$, there exists an index $i_{W'}\geq 3$ such that $\N(\beta_{i_{W'}+1})>N_{W'}$. By definition, every element in $(\co_K^+/\co_K^{\times m})\setminus\{\alpha\}$ of norm $<\N(\beta_{i_{W'}+1})$ is represented by $Q_{i_{W'}}$. Since $W'\subset (\co_K^+/\co_K^{\times m})\setminus\{\alpha\}$, and every $\eta\in W'$ has $\N(\eta)\leq N_{W'}$, the form $Q_{i_{W'}}$ represents every $\eta\in W'$ over $K$. Now, put
    \[
    \hat{i}=\max\{i_{W'}+1, d(d+1)(m-1)2^{m-1}+1\},
    \]
    and $\hat{Q}=Q_{\hat{i}}$. Then, $\operatorname{rank}(\hat{Q})=\hat{i}\geq d(d+1)(m-1)2^{m-1}$ and 
    \[
    \hat{Q}=Q_{i_{W'}}\perp \langle \beta_{i_{W'}+1}\rangle\perp \langle \beta_{i_{W'}+2}, \beta_{i_{W'}+3},\dots,\beta_{\hat{i}}\rangle.
    \]
    The same argument as in Theorem \ref{termination} yields that the assumption of the Corollary \ref{cor:alg} holds with 
    $$t=\max \left( 0, \{v_\mathfrak{p}(m)+v_\mathfrak{p}(\beta_{i_{W'}+1})\mid \mathfrak{p}\in S\} \right).$$ Then, by Corollary \ref{cor:alg}, there exists $C>0$ such that every $\gamma\in \co_K^+$ with $\N(\gamma)$ is represented by $\hat{Q}$, and thus by every $Q_i$ with $i\geq \hat{i}$. Therefore, we have $\N(\beta_i)\leq C$. Since $\beta_i$ are pairwise distinct, we have infinitely many distinct $\beta_i$ with norm $\leq C$, a contradiction. Hence, the process terminates, i.e., there exists a $j\geq 1$ such that $Q_j$ represents all $(\co_K^+/\co_K^{\times m})\setminus \{\alpha\}$, thereby proving criticality of $\alpha$.
\end{proof}

\begin{proof}[{Proof of Theorem \ref{uniqueness}}]
    Let us first show that if a form represent all  $\overline{\mathcal{C}}_K^{(m)}$, then it is universal. Suppose a totally positive definite diagonal $m$-ic form $Q$ represents all $\overline{\mathcal{C}}_K^{(m)}$. If $Q$ is not universal, then it has a truant $\alpha$. Then, by Proposition \ref{prop:9.6}, $\alpha\in\overline{\mathcal{C}}_K^{(m)}$, a contradiction.
    Next, observe that every criterion set (finite or infinite) contains $\overline{\mathcal{C}}_K^{(m)}$ by definition, i.e., $\overline{\mathcal{C}}_K^{(m)}$ is minimal with respect to inclusion. 

    We shall now show that $\overline{\mathcal{C}}_K^{(m)}$ is finite. By Corollary \ref{cor:9.4}, there is a criterion set $\mathcal{C}_K^{(m)}$, which is a finite set. Then, by our observation $\overline{\mathcal{C}}_K^{(m)}\subseteq \mathcal{C}_K^{(m)}$; hence $\overline{\mathcal{C}}_K^{(m)}$ is a finite set. 
\end{proof}

\subsection{Special critical elements}
By Theorem \ref{uniqueness}, we know that the set of all critical elements is a criterion set for diagonal $m$-ic forms over $K$, which is also minimal with respect to inclusion. In this section, we will show some specific elements that must lie in $\overline{\mathcal{C}}_K^{(m)}$. All the results in this direction follow from \cite[Section 4]{kkr} with minor modifications.

We say that $\alpha\in \co_K$ is $m$th powerfree if $\beta^m\mid \alpha$ for $\beta\in \co_K$ implies $\beta\in \co_K^\times$; in other words, $\alpha$ is not divisible by the $m$th power of a non-unit.

\begin{lemma}\label{lem:9.7}
    The set $\overline{\mathcal{C}}_K^{(m)}$ consists of $m$th powerfree elements. 
\end{lemma}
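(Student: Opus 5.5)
The plan is to argue by contradiction, using the characterization of critical elements in Proposition \ref{prop:9.6}: $\alpha$ is critical exactly when it is a truant of some totally positive definite diagonal $m$-ic form. So suppose $\alpha$ is critical but not $m$th powerfree. Then $\alpha = \gamma\beta^m$ with $\beta\in\co_K$ a non-unit and $\gamma\in\co_K$. Since $\beta^m$ is totally positive ($m$ even, $\beta\neq 0$) and $\alpha\succ 0$, we get $\gamma\succ 0$. This is well defined on classes in $\co_K^+/\co_K^{\times m}$, because multiplying $\alpha$ by $\varepsilon^m$ only changes $\gamma$ by a unit $m$th power.

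Next, compare norms. We have $\N(\gamma)=\N(\alpha)/\N(\beta)^m$. Because $\beta$ is a non-unit, $|\N(\beta)|\geq 2$, and hence $\N(\gamma)<\N(\alpha)$.

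Now take the form $Q$ witnessing criticality: it represents every class except $\alpha$. Equivalently, by Proposition \ref{prop:9.6}, $\alpha$ is a truant of some form $Q$, which is all the argument needs. Since $\N(\gamma)<\N(\alpha)$, the truant property forces $Q$ to represent $\gamma$. Choose $x\in\co_K^n$ with $Q(x)=\gamma$. By homogeneity,
\[
Q(\beta x)=\beta^m Q(x)=\beta^m\gamma=\alpha,
\]
so $Q$ represents $\alpha$. This contradicts $\alpha$ being a truant of $Q$. Hence every critical element is $m$th powerfree.

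I expect no step to be a genuine obstacle. The only points needing care are the total positivity of $\gamma$, which is needed so that $\gamma$ lies in $\co_K^+/\co_K^{\times m}$, and the strict norm inequality, which uses that $\beta$ is a non-unit and that $m\geq 2$.
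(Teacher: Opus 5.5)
Your proof is correct and is essentially the paper's argument: write $\alpha=\beta^m\gamma$ with $\beta$ a non-unit, observe that the form $Q$ witnessing criticality must represent $\gamma$, and conclude that $Q(\beta x)=\beta^m\gamma=\alpha$, a contradiction. The only differences are cosmetic. You phrase the argument through the truant property, but the direction of Proposition~\ref{prop:9.6} you use follows immediately from the definition of a critical element, so its harder direction is not needed. Your inequality $\N(\gamma)<\N(\alpha)$ also makes explicit a point the paper leaves implicit, namely that $\gamma$ lies in a different class from $\alpha$.
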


\begin{proof}
    Suppose $\alpha\in\co_K^+$ is not $m$th powerfree, i.e., there exists $\beta\in \co_K\setminus\co_K^\times$ such that $\alpha=\beta^m\gamma$ for some $\gamma\in \co_K^+$. Let $Q$ be a totally positive definite diagonal $m$-ic form that represents all $\co_K^+\setminus\{\alpha\}$. Then, in particular, $Q$ represents $\gamma$, say $Q(x)=\gamma$ for some $x\in\co_K^{\operatorname{rank}(Q)}$. For then, $Q(\beta x)=\beta^m\gamma=\alpha$ leads to a contradiction. Therefore, every form representing all $\co_K^+\setminus\{\alpha\}$ must also represent $\alpha$, i.e., $\alpha$ is not critical. 
\end{proof}

Even more specific critical elements are $m$th powerfree indecomposable elements.

\begin{theorem}
    The set of all $m$th powerfree indecomposables is contained in $\overline{\mathcal{C}}_K^{(m)}$. 
\end{theorem}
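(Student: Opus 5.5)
By Proposition \ref{prop:9.6}, it suffices to show that every $m$th powerfree indecomposable $\alpha$ is a truant of some totally positive definite diagonal $m$-ic form over $K$. In fact I would aim for the stronger statement that $\alpha$ is critical: find a form $Q$ that represents every class in $(\co_K^+/\co_K^{\times m})\setminus\{\alpha\}$ but not $\alpha$. Such a $Q$ exists once we know that some form misses $\alpha$ while representing every class of smaller norm. The construction in the proof of Proposition \ref{prop:9.6} then escalates it while never picking up $\alpha$.

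The natural candidate is the form
\[
Q=\langle \gamma \mid \gamma\in \co_K^+/\co_K^{\times m},\ \N(\gamma)\le \N(\alpha),\ \gamma\neq \alpha\rangle ,
\]
a finite orthogonal sum with one coefficient for each class of norm at most $\N(\alpha)$ other than $\alpha$ itself. Every class $\gamma\ne\alpha$ with $\N(\gamma)<\N(\alpha)$ is represented trivially by $\gamma\cdot 1^m$. So it remains to show that $Q$ does not represent $\alpha$; then $\alpha$ is a truant of $Q$, and Proposition \ref{prop:9.6} finishes the proof.

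The heart of the argument is the following. Suppose $\alpha=\sum_i \gamma_i x_i^m$ with $x_i\in\co_K$. Each nonzero term $\gamma_i x_i^m$ is totally positive, because $m$ is even. Since $\alpha$ is indecomposable, exactly one term is nonzero, so $\alpha=\gamma x^m$ for some coefficient $\gamma$ and some $x\neq 0$. Now $x^m\mid\alpha$, and $\alpha$ is $m$th powerfree, so $x$ is a unit. Hence $\gamma$ and $\alpha$ lie in the same class modulo $\co_K^{\times m}$, which contradicts $\gamma\neq\alpha$. Note that $\N(\alpha)\ge \N(\gamma)$ holds automatically here, so all the coefficients involved really do lie in the finite list defining $Q$.

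I do not expect a serious obstacle. The one point to check carefully is that indecomposability is applied to a sum of several totally positive terms rather than just two. This follows by grouping: if two or more terms are nonzero, split the sum into the first nonzero term plus the rest, and both parts are totally positive integers. One must also remember that the zero terms, coming from $x_i=0$, can be discarded. The remaining step, turning ``truant'' into ``critical'', is exactly Proposition \ref{prop:9.6}, which already contains the Birch-based termination argument.
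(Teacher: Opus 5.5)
Your proposal is correct and follows essentially the paper's proof. Both build a diagonal form whose coefficients represent the classes of smaller norm, use indecomposability to force a single nonzero term, use $m$th powerfreeness to force that variable to be a unit, and conclude via Proposition \ref{prop:9.6}.

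The only difference is cosmetic. You also include the classes of norm exactly $\N(\alpha)$ other than $\alpha$, so you reach the contradiction by comparing classes directly. The paper uses only classes of norm $<\N(\alpha)$ and leaves the unit case, a norm contradiction, implicit.
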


\begin{proof}
    Let $\alpha$ be an $m$th powerfree indecomposable. It is enough to show that $\alpha$ is a truant of some totally positive definite diagonal $m$-ic form, thanks to Proposition \ref{prop:9.6}. Let us consider the set of all elements in $\co_K^+/\co_K^{\times m}$ with norm $<\N(\alpha)$. There are only finitely many of them; let $\beta_1,\beta_2,\dots,\beta_n$ be their representatives. Now, consider the diagonal $m$-ic form 
    \[
    Q(x_1,x_2,\dots,x_n)=\sum_{i=1}^n \beta_ix_i^m;
    \]
    this is a totally positive form as $\beta_i\in \co_K^+$, and it represents every element with norm $<\N(\alpha)$. We now show that $Q$ does not represent $\alpha$. Suppose, for contradiction, that $Q(x)=\alpha$ for some $x\in \co_K^{\operatorname{rank}(Q)}$. If there exist distinct $x_i,x_j\neq 0$ and $Q(x)=\alpha$, then $\alpha$ is not indecomposable. Therefore, there exists a unique $x_i\neq 0$, but then $\alpha$ is not $m$th powerfree. Hence, $Q$ cannot represent $\alpha$, as we intended.
\end{proof}

Let us now show some rational integers inside $\overline{\mathcal{C}}_K^{(m)}$.

\begin{proposition}
    Let $K$ be a totally real number field and $m\geq 2$ an even integer. Then, the following statements are true.
    \begin{enumerate}
        \item $1\in \overline{\mathcal{C}}_K^{(m)}$.
        \item  $2\in \overline{\mathcal{C}}_K^{(m)}$ if and only if $2$ is $m$th powerfree. 
        \item Assume that $2,3$ are $m$th powerfree, and that $\sqrt{5}\notin K$. Then, $3\in \overline{\mathcal{C}}_K^{(m)}$.
    \end{enumerate}
\end{proposition}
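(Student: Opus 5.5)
By Proposition \ref{prop:9.6}, it suffices in each case to exhibit a totally positive definite diagonal $m$-ic form having the relevant element as a truant. For the converse direction in (2) we use Lemma \ref{lem:9.7}. The guiding principle is that for $\beta\in\co_K^+$ and $y\in\co_K\setminus\{0\}$, the relation $\beta y^m\preceq\alpha$ with $\alpha$ a small rational integer is very restrictive. Indeed, $\N(\beta y^m)\geq\N(\beta)\geq 1$ while $\N(\alpha)=\alpha^d$.

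For (1), take the empty form $Q_0$. The class $1$ has norm $1$, the minimum possible norm on $\co_K^+/\co_K^{\times m}$, and $Q_0$ represents none of these classes. Hence $1$ is a truant of $Q_0$ (as are all totally positive units). So $1\in\overline{\mathcal{C}}_K^{(m)}$.

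For (2), if $2$ is not $m$th powerfree, Lemma \ref{lem:9.7} shows $2\notin\overline{\mathcal{C}}_K^{(m)}$. Conversely, suppose $2$ is $m$th powerfree. Let $\beta_1,\dots,\beta_n$ be representatives of all classes of norm $<2^d$, and put $Q=\langle\beta_1,\dots,\beta_n\rangle$, exactly as in the indecomposables theorem. It remains to show $Q$ does not represent $2$.
\begin{itemize}
\item Suppose $2=\sum\beta_i x_i^m$ with at least two nonzero terms. Each term is $\succeq$ a totally positive element, so $2=\gamma+\delta$ with $\gamma,\delta\in\co_K^+$. Then $\sigma(\gamma)\in(0,2)$ in every embedding.
\item I will use that the only totally positive $\gamma$ with $\gamma\prec 2$ and $2-\gamma\succ 0$, which moreover lie in the additive monoid generated by the summands, is $\gamma=1$ (so $2=1+1$). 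Then I need to exclude this using the classes available. This is the first subtle point: $1+1$ is a genuine decomposition, and $1$ is among the $\beta_i$.
\end{itemize}
So this form does represent $2$, and I must modify the construction. Instead I would take $Q=\langle\beta_i\rangle$ over the classes of norm $<2^d$, but remove the class $1$ and replace it by a single copy $\langle 1\rangle$, keeping all other units. Then $2=1\cdot x^m+\cdots$ forces considering $1+\varepsilon$ with $\varepsilon$ a totally positive unit not in the class of $1$. The point of this choice is that $1+\varepsilon\neq 2$.

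I then need: every decomposition $2=\gamma+\delta$ into totally positive integers has $\gamma,\delta$ of norm $<2^d$, and it must come from terms $\beta_i x_i^m$. If $\gamma=\beta_i x_i^m$ with $x_i$ a nonunit, then $\N(\gamma)\geq 2^m$. Using AM–GM, $\N(\gamma)^{1/d}+\N(\delta)^{1/d}\le 2$, which rules this out. So all $x_i$ are units, and the summands are totally positive units $u_1+\dots+u_k=2$. Again by AM–GM, trace considerations give $k\le 2$. Then $u_1+u_2=2$ with units forces $u_1=u_2=1$ (equality in AM–GM on norms). This requires two copies of the class $1$, which $Q$ lacks. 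The single-term case $2=\beta x^m$ is excluded by $m$th powerfreeness. Hence $2$ is a truant of $Q$.

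For (3), I use the same strategy with target $3$. Take $Q$ to be the form with one copy of each class of norm $<3^d$. Unit summands only arise via classes, so duplicates of $1$ must be controlled. Decompositions of $3$ then reduce to sums of at most three totally positive terms $\beta_i x_i^m$ with small norm. By the AM–GM argument, summands of the form $\beta x^m$ with $x$ a nonunit are impossible unless they equal $2^m\beta'$, which is too large. So all terms are units or the class of $2$.
\begin{itemize}
\item The case $3=2+1$ is a problem: both $2$ and $1$ are present in $Q$. So I would instead omit a suitable class and argue via truant minimality, or choose $Q$ so that $2$ is represented only as $1+1$ with the second copy of $1$ replaced.
\item The main obstacle is classifying the equations $u+v=3$ and $u+v+w=3$ in totally positive units. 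The hypothesis $\sqrt5\notin K$ should exclude $u=\tfrac{3\pm\sqrt5}{2}$, which solves $u+u^{-1}=3$.
\end{itemize}
I expect this case analysis of unit equations and the accompanying careful choice of $Q$ to be the hardest step.
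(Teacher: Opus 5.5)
\textbf{Parts (1) and (2).} These are fine and agree with the paper. In (2) you end up with exactly the paper's form, one coefficient per class of norm $<2^d$. Your bound $\sum_j \N(s_j)^{1/d}\le \frac1d\T(2)=2$ is a correct, self-contained replacement for the cited fact that $2=1+1$ is the only decomposition. However, your intermediate claim that this form ``does represent $2$'' is false. If $\beta_i x_i^m=1$, then $x_i\in\co_K^\times$ and $\beta_i$ lies in the class of $1$, so only one coefficient can ever contribute a summand $1$. Your ``modified'' form is therefore the same form, and no modification was needed.

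\textbf{Part (3): first gap, the form is never fixed.} The one-copy-per-class form fails via $3=2\cdot1^m+1\cdot1^m$. Simply omitting the class of $2$ also fails: then $2$, of norm $2^d<3^d$, is unrepresented, so $3$ is not a truant. The construction that works is the paper's: $\langle 1,1\rangle\perp\langle\alpha_1,\dots,\alpha_n\rangle$, with the $\alpha_i$ running over all classes of norm $<3^d$ other than those of $1$ and $2$. For this form:
\begin{itemize}
\item $2=1^m+1^m$ is represented, so every class of norm $<3^d$ is represented.
\item No single term can equal $2$, by $m$th powerfreeness of $2$ and the omission of its class.
\item Only two terms can equal $1$.
\end{itemize}
Hence neither $2+1$ nor $1+1+1$ is available.

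\textbf{Part (3): second gap, the reduction to ``unit equations'' is unjustified.} For target $3$, AM--GM only gives $\sum_j\N(s_j)^{1/d}\le 3$. With two summands this allows a summand of norm up to $2^d$. So a term $\beta x^m$ with $x$ a nonunit (norm $\ge 2^m$) is not excluded whenever $m\le d$, and non-unit summands such as $2$ genuinely occur.

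What you actually need is the classification that the only $\gamma\in\co_K^+$ with $3-\gamma\succ0$ are $1$, $2$, $\frac{3\pm\sqrt5}{2}$. Consequently the only decompositions of $3$ are $2+1$, $1+1+1$ and $\frac{3+\sqrt5}{2}+\frac{3-\sqrt5}{2}$, the last being excluded by $\sqrt5\notin K$. The paper imports this classification from Kala--Kr\'asensk\'y--Romeo. It does not follow from AM--GM. It does follow from Siegel's theorem that every totally positive algebraic integer other than $1$ and $\frac{3\pm\sqrt5}{2}$ has absolute trace $>3/2$: the absolute traces of $\gamma$ and $3-\gamma$ sum to $3$, so one of them is $\le 3/2$.
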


\begin{proof}
    \begin{enumerate}
        \item The empty form $Q_0$ has a truant $1$. Then, by Proposition \ref{prop:9.6}, $1\in \overline{\mathcal{C}}_K^{(m)}$.

        \item If $2\in \overline{\mathcal{C}}_K^{(m)}$, then by Lemma \ref{lem:9.7} $2$ is $m$th powerfree. Conversely, assume $2$ is $m$th powerfree. By Proposition \ref{prop:9.6}, it is enough to show that $2$ is a truant of some form. Let $\alpha_1,\dots \alpha_n$ be a complete set of distinct representatives of $m$th powerfree classes of norm $<\N(2)$, and set 
        \[
        Q(x)=\sum_{i=1}^n\alpha_i x_i^m.
        \]
        First, note that by construction, $Q$ represents every class of norm $<\N(2)$. Suppose, if possible, that $Q$ represents $2$. By the classification of decompositions of $2$ \cite[Proposition $4.1$]{kkr}, we have only one non-trivial decomposition $2=1+1$. 

        If there exists a unique $i$ such that $\alpha_ix_i^m=2$, then $x_i$ must be a unit, as $2$ is $m$th powerfree; thus, $\alpha$ and $2$ lie in the same class, a contradiction. Therefore, there exists $i\neq j$ such that $\alpha_ix_i^m+\alpha_jx_j^m=2$. Then, $\alpha_ix_i^m=1=\alpha_jx_j^m$. This implies that $\alpha_i$ and $\alpha_j$ lie in the same class, a contradiction. Therefore, $2$ is a truant of $Q$.

        \item Assume that $2,3$ are $m$th powerfree, and that $\sqrt{5}\notin K$. Let $\alpha_1,\dots,\alpha_n$ be a set of distinct representatives of all classes of norm $<\N(3)$ except the class of $1,2$. Consider the $m$-ic form 
        \[
        Q(y_1,y_2,x_1,\dots,x_n)=y_1^m+y_2^m+\sum_{i=1}^n\alpha_ix_i^m.
        \]
        By construction, $Q$ represents all classes of norm $<\N(3)$. By the decomposition of $3$, we have 
        \[
        3=1+1+1=2+1=\left( \frac{1+\sqrt{5}}{2}\right)^2+\left( \frac{1-\sqrt{5}}{2}\right)^2.
        \]
        We do not consider the last decomposition as $\sqrt{5}\notin K$. Note that $y_i^m\neq 2$, as $2$ is $m$th powerfree, also $\alpha_ix_i^m\neq 2$ because the class of $2$ was excluded. Therefore, $3$ cannot be represented by $Q$ with decomposition $2+1$. As in (2), $3$ cannot be represented by $Q$ with decomposition $1+1+1$. Hence $3$ is a truant of $Q$. This completes the proof. \qedhere
    \end{enumerate}
\end{proof}

\bibliographystyle{amsalpha}
	
\bibliography{Citation}

\end{document}